\theoremstyle{plain}
\newtheorem{theorem}{Theorem}[section]
\newtheorem{corollary}[theorem]{Corollary}
\newtheorem{proposition}[theorem]{Proposition}
\newtheorem{lemma}[theorem]{Lemma}
\newtheorem{conjecture}[theorem]{Conjecture}
\theoremstyle{definition}
\newtheorem{definition}[theorem]{Definition}
\newtheorem{example}[theorem]{Example}
\newtheorem{notation}[theorem]{Notation}
\newtheorem{remark}[theorem]{Remark}
\newtheorem{claim}[theorem]{Claim}
\renewcommand\>{\rangle}
\newcommand\<{\langle}
\newcommand\ZZ{\mathbb{Z}}
\newcommand\Zplus{\mathbb{Z}_{>0}}
\newcommand\NN{\mathbb{Z}_{\geq 0}} 
\newcommand{\op}{\operatorname}
\renewcommand\iff{\Leftrightarrow}
\renewcommand\implies{\Rightarrow}
\newcommand{\bR}{\mathbb{R}}
\newcommand{\Rplus}{\bR_{>0}}
\newcommand{\Rnn}{\bR_{\geq 0}}
\newcommand\ol[1]{{\overline {#1}}}
\newcommand{\CRN}{chemical reaction network }
\newcommand{\CRNs}{chemical reaction networks }
\newcommand{\CRNsNoSpace}{chemical reaction networks}
\def\SS{\mathcal S}
\def\CC{\mathcal C}
\def\RR{\mathcal R}
\def\TT{\op{Super}}
\newcommand{\too}{\longrightarrow}
\newcommand{\lra}{\rightleftharpoons}
\newcommand{\source}[1]{\op{reactant}(#1)}
\newcommand{\target}[1]{\op{product}(#1)} 
\newcommand{\flux}[1]{\op{flux}(#1)} 
\newcommand{\init}[2]{\op{init}_{#1}(#2)}
\newcommand{\supp}[2]{\op{supp}_{#1}(#2)}
\newcommand{\GG}{(\SS,\CC,\RR)} 
\newcommand{\sphere}{S^{m-1}}
\newcommand{\cube}{[0,\infty]^m}
\newcommand{\bCoef}[2]{\beta_{#1}({#2})}
\newcommand{\relInt}{\op{int}}
\newcommand{\intsBdd}{\mathit{CmpctInt}}
\newcommand{\cpcOrth}{[0, \infty]^m}
\newcommand{\invtPoly}{\mathcal{P}}
\newcommand{\relIntP}{\relInt(\invtPoly)}
\newcommand{\HPerp}{H^{\perp}}
\newcommand{\Oset}{\mathcal{O}} 
\newcommand{\Kset}{\mathcal{K}} 
\newcommand{\pull}{\op{pull}}
\newcommand{\good}{sustaining }
\newcommand{\bad}{draining }
\newcommand{\Toric}{\op{Toric}}
\newcommand{\net}{ \GG }
\newcommand{\GAC}{GAC }
\newcommand{\GACnospace}{GAC}
\newcommand{\cutoff}{\theta}
\newcommand{\param}{\theta_0}
\newcommand{\wN}{w(i)}
\newcommand{\wI}{w(i)}
\newcommand{\thetaN}{\theta(i)}
\newcommand{\thetaI}{\theta(i)}
\newcommand{\toricJet}{\big(\theta(i)^{w(i)}\big)}
\newcommand{\unitJet}{(w(i))}
\newcommand{\sinpioverthree}{0.866025404}
\newcommand{\halfSinPiOverThree}{0.433012702}
   \def\fillrightmap{\mathord- \mkern-6mu
		     \cleaders\hbox{$\mkern-2mu \mathord-\mkern-2mu$}\hfill
		     \mkern-6mu \mathord\rightarrow}
\newenvironment{bullets}%
        {\begin{list}
                {\noindent\makebox[0mm][r]{$\bullet$}}
                {\leftmargin=5.5ex \usecounter{enumi}
 		 \topsep=1.5mm \itemsep=-.75ex}
        }
        {\end{list}}
\begin{document}

\title{A geometric approach to the Global Attractor Conjecture}
\author{Manoj Gopalkrishnan, Ezra Miller, and Anne Shiu}
\date{16 December 2013}

\maketitle

\begin{abstract}
This paper introduces the class of \emph{strongly endotactic
networks}, a subclass of the endotactic networks introduced by
G.~Craciun, F.~Nazarov, and C.~Pantea.  The main result states that
the global attractor conjecture holds for complex-balanced systems
that are strongly endotactic: every trajectory with positive initial
condition converges to the unique positive equilibrium allowed by
conservation laws.  This extends a recent result by D.~F.~Anderson for
systems where the reaction diagram has only one linkage class
(connected component).   The results here are proved using differential
inclusions, a setting that includes power-law systems.  The key
ideas include a perspective on reaction kinetics in terms of
combinatorial geometry of reaction diagrams, a projection argument
that enables analysis of a given system in terms of systems with lower
dimension, and an extension of Birch's theorem, a well-known result
about intersections of affine subspaces with manifolds parameterized
by monomials.
\end{abstract}

\tableofcontents


\section{Introduction}\label{s:intro}

The general study of reaction systems with mass-action kinetics goes
back at least to the pioneering work of Feinberg, Horn, and Jackson in
the 1970s~\cite{Feinberg72, HornJackson}.  The class of systems under
consideration in this paper includes such mass-action systems, as well
as power-law systems and S-systems from biochemical systems
theory~\cite{Savageau}.

Quite apart from the context of biochemistry, such systems appear to
be objects of intrinsic interest for engineers and mathematicians.
For instance, essentially identical models have been investigated in
genetic algorithms~\cite{QDS} and population
dynamics~\cite{Lotka1920}.  Models with similar mathematical structure
have been studied in distributed systems~\cite{Petri66} and algebraic
statistics~\cite{ASCB}.  Connections between these systems and
binomial algebra and toric geometry have been stressed by several
authors~\cite{Adleman2008, TDS, Craciun2010some, Gatermann, cat,
abelBinomial, ShiuSturmfels}.

The Global Attractor Conjecture (GAC) is the focus of this paper.
Given a reaction system, conservation laws induce a foliation of the
concentration space (which is a positive orthant) by polyhedra that
are forward-invariant with respect to the dynamics.  It is well-known
that when the dynamics of a reaction system admit a ``pseudo-Helmholtz
free energy function'' as a strict Lyapunov function, then each
forward-invariant polyhedron contains a special equilibrium point,
sometimes called the ``Birch point'' due to a connection to Birch's
theorem \cite{HornJackson}.  The GAC, which has resisted attempts at
proof for four decades~\cite{Horn74}, asserts that every trajectory
asymptotically approaches the unique Birch point in its
forward-invariant polyhedron.  A survey of literature relevant to the
\GAC appears in Section~\ref{s:conjs}.

The \GAC is usually stated for complex-balanced systems, which are
generalizations of the more well-known detailed-balanced systems.
Complex-balanced systems are known to admit pseudo-Helmholtz free
energy functions as Lyapunov functions.  Specializing the \GAC to the
case of networks in which every reaction has the form $A \to B$ (that
is, every complex is a species) yields the well-known Ergodic Theorem
for continuous-time, discrete-space, autonomous Markov
chains~\cite{Norris}.  From this perspective, the GAC may be viewed as
an ergodicity conjecture for chemical reaction networks evolving under
mass-action kinetics.  Indeed, reaction systems theory may be viewed
as a nonlinear generalization of the theory of continuous-time
discrete-space autonomous Markov chains.

A key idea in our work is that the combinatorial geometry of a
reaction network in the space of (chemical) complexes---the reactants
and products of the reactions---informs the dynamics in concentration
space.  This connection was anticipated to a certain extent by the
``extended permanence conjecture''
(Conjecture~\ref{conj:ext_permanence}) of Craciun, Nazarov, and
Pantea, which implies the \GACnospace~\cite{CNP}.  Their conjecture
captures the intuitively appealing idea that if a reaction network
``points inwards'', in the sense of being endotactic (see
Definition~\ref{d:endotactic}), then the corresponding dynamics in
concentration space must also roughly ``point inwards'', in the sense
of being permanent (see Definition~\ref{d:persistent_permanent}).

We develop the correspondence between geometry of a network in the
space of complexes and dynamics in concentration space by analyzing
the contributions of each reaction to the dynamics along ``toric
jets'' (Definition~\ref{d:jet}.\ref{d:toricjet}).  In doing so, we
positively resolve the extended permanence conjecture for a subclass
of endotactic networks called ``strongly endotactic'' networks
(Definition~\ref{d:endotactic}), our first main result.


\begin{theorem}\label{thm:main}
Every strongly endotactic reaction network is permanent.
\end{theorem}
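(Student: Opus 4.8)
The strategy is to exhibit, for each strongly endotactic network, an explicit compact forward-invariant set inside each stoichiometric compatibility class that is entered by every trajectory and never left, which is exactly the content of permanence. The natural candidate is a sublevel set of a Lyapunov-type function; since we are not assuming complex balancing here, we cannot use the pseudo-Helmholtz free energy directly, so instead I would work with a function $g$ built out of the reaction diagram itself — a ``prelyapunov'' function whose decrease is controlled by the combinatorial geometry of the network in the space of complexes. The core claim to establish is: outside a large compact set, and uniformly near the boundary of the positive orthant, this function strictly decreases along trajectories (or, in the differential-inclusion formulation, along all admissible velocity vectors). Given such a uniform decrease estimate, standard Lyapunov/LaSalle-type arguments for differential inclusions yield permanence.

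The heart of the matter is the decrease estimate, and here I would use the ``toric jet'' machinery advertised in the introduction. Fix a sequence of points in concentration space tending to infinity or to the boundary; after passing to a subsequence, parametrize it along a toric jet $\big(\theta(i)^{w(i)}\big)$ with $w \in \bR^m$ a direction and $\theta(i) \to 0$ or $\infty$. Each reaction $y \to y'$ contributes to $\dot g$ a term whose leading asymptotic order in $\theta$ is governed by $\langle w, y\rangle$ — the reactant complex $y$ paired against the jet direction $w$. Group the reactions into ``tiers'' according to the value of $\langle w, \cdot\rangle$ on their reactant complexes: the dominant tier consists of reactions whose reactants minimize (or maximize, depending on sign conventions) this pairing. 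The strongly endotactic hypothesis is precisely engineered so that in the dominant tier there is at least one ``\good'' reaction — one whose reaction vector $y' - y$ has strictly negative pairing with $w$, pulling $g$ down — and, crucially, no ``\bad'' reaction in that tier can dominate it. That asymmetry (``there is a sustaining reaction and it wins''), which fails for merely endotactic networks, is what makes the leading-order term of $\dot g$ have a definite sign.

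The plan, then, proceeds in these steps. First, set up the differential-inclusion framework so that the argument applies to power-law and mass-action systems simultaneously, and reduce permanence to: (a) trajectories are eventually bounded, and (b) trajectories are eventually bounded away from $\partial \bR_{>0}^m$. Second, introduce the prelyapunov function $g$ and the notion of a \testSeq, and reformulate (a)+(b) as the nonexistence of such a test sequence along which $g$ fails to decrease. Third — the key geometric step — prove the Tier Lemma: along any toric jet, the dominant tier of a strongly endotactic network contains a sustaining reaction whose contribution dominates all draining contributions, so $g$ strictly decreases to leading order. Fourth, invoke the extension of Birch's theorem to locate the Birch point and to ensure $g$ is well-defined and proper on the relevant compatibility class, handling the projection to lower-dimensional subsystems when the network is not ``full-dimensional.'' The main obstacle is unquestionably the third step: controlling the competition between sustaining and draining reactions within a tier, especially when multiple reactions share the same reactant complex or the same leading order, and making the ``sustaining wins'' estimate uniform over all jet directions $w$ on the sphere $\sphere$ — this uniformity is what upgrades a pointwise asymptotic statement into the compact-trapping-region conclusion that \emph{is} permanence.
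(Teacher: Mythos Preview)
Your overall architecture---toric jets, tier decomposition, sustaining-versus-draining reactions, and a Lyapunov-like function $g$ that decreases outside a compact set---matches the paper's approach closely, and your ``Tier Lemma'' is essentially Proposition~\ref{p:domination}. Two corrections and one genuine gap, however.

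First, two misconceptions. The function the paper uses \emph{is} the pseudo-Helmholtz free energy $g(x) = \sum_i x_i \log x_i - x_i$; complex balancing is irrelevant to its definition or to the fact that it decreases along trajectories of a strongly endotactic system outside a compact set (Theorem~\ref{thm:LyFncWorks_cpc}). Complex balancing only matters if you want $g$ to be a \emph{strict} Lyapunov function with its minimum at a steady state, which is not needed here. Second, the extension of Birch's theorem is not used to ``locate the Birch point and ensure $g$ is well-defined''; it is used inside the proof of the decrease estimate (Theorem~\ref{thm:LyFncWorks_cpc}) to rule out the degenerate case $w_1 \in H^\perp$, which is precisely the hypothesis needed to invoke the domination proposition.

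The genuine gap is your passage from ``$g$ decreases outside a compact set'' to permanence via ``standard Lyapunov/LaSalle-type arguments.'' The paper explicitly flags this as the step that does \emph{not} go through directly (Remark~\ref{rmk:whyNoPermanence}): the sublevel sets of $g$ can meet the boundary $\partial\Rnn^\SS$, so the smallest sublevel set containing your compact $\Kset$ need not be a compact subset of the open orthant, and hence is not the forward-invariant trapping region you want. The decrease of $g$ gives boundedness (Theorem~\ref{thm:bddness}) and repulsion from the origin (Lemma~\ref{l:avoid_origin}), but to upgrade ``origin is repelling'' to ``entire boundary is repelling'' the paper invokes a separate projection argument from~\cite{ProjArg} (Lemmas~\ref{l:projArg} and~\ref{l:projArg'}): strongly endotactic networks are closed under projection to coordinate subspaces, so one bootstraps repulsion from lower-dimensional faces. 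Your brief mention of ``projection to lower-dimensional subsystems when the network is not full-dimensional'' misidentifies the role of this step---it is not a technicality about dimension but the mechanism that converts origin-repulsion into boundary-repulsion, and without it your argument does not close.
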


As stated above, the extended permanence conjecture implies the
\GACnospace.


\begin{theorem}\label{thm:main2}
The \GAC holds for strongly endotactic complex-balanced reaction
systems.
\end{theorem}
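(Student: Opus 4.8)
The plan is to deduce Theorem~\ref{thm:main2} from Theorem~\ref{thm:main} together with the classical theory of complex-balanced systems, since the extended permanence conjecture (Conjecture~\ref{conj:ext_permanence}) is already known to imply the \GACnospace. Recall that a complex-balanced system admits a strict Lyapunov function, the pseudo-Helmholtz free energy $V(c) = \sum_i \bigl( c_i \log(c_i / c_i^*) - c_i + c_i^* \bigr)$ relative to a positive complex-balanced equilibrium $c^*$: its time derivative along trajectories is nonpositive, and vanishes exactly on the set of positive complex-balanced equilibria. Moreover, by the version of Birch's theorem recalled earlier, each stoichiometric compatibility class---the forward-invariant polyhedron through a given positive initial condition---contains exactly one positive equilibrium, the Birch point. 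So it suffices to show that a trajectory starting at any $c_0 \in \bR_{>0}^n$ converges to the Birch point of its compatibility class.

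First I would invoke Theorem~\ref{thm:main}: a strongly endotactic network is permanent, so for the complex-balanced mass-action system at hand there is a compact set $K \subset \bR_{>0}^n$ that every positive trajectory enters and never afterward leaves. Hence the $\omega$-limit set $\omega(c_0)$ is nonempty, compact, connected, and invariant, and---because it lies in $K$---it is contained in the open orthant $\bR_{>0}^n$; it also lies in the compatibility class of $c_0$, that polyhedron being closed and forward-invariant. Since the trajectory is now bounded and $V$ is bounded below and nonincreasing along it, $V$ tends to a limit, and the LaSalle invariance principle places $\omega(c_0)$ inside the set $\{\dot V = 0\}$, which for a complex-balanced system, intersected with $\bR_{>0}^n$, is exactly the set of positive complex-balanced equilibria.

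Intersecting that equilibrium set with the compatibility class of $c_0$ leaves only the Birch point $c^*$ of that class; therefore $\omega(c_0) = \{c^*\}$ and the trajectory converges to $c^*$, which is the statement of the \GACnospace. The substantive difficulty in the whole argument is concentrated in Theorem~\ref{thm:main}; once permanence is available, the remaining steps are the standard Lyapunov/LaSalle packaging. The only points meriting care are that the permanence conclusion of Theorem~\ref{thm:main}, valid for every admissible rate vector, applies in particular to complex-balanced rate vectors, and that one must rule out $\omega$-limit points on the boundary of the orthant---which permanence does---since otherwise the Lyapunov analysis, and indeed the uniqueness of the limiting equilibrium, could fail.
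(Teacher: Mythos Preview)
Your proposal is correct and follows essentially the same approach as the paper. The paper's own proof is a one-liner: it invokes Theorem~\ref{thm:persistence} (persistence of strongly endotactic networks) together with the discussion preceding Conjecture~\ref{conj:persistence}, which records the well-known fact that for complex-balanced systems the strict Lyapunov function makes the \GAC equivalent to persistence. Your argument unpacks exactly that equivalence via LaSalle and Birch, and differs only in that you appeal to the stronger Theorem~\ref{thm:main} (permanence) rather than persistence; since the paper's notion of persistence (closure of trajectories in $[0,\infty]^\SS$ disjoint from the boundary) already entails boundedness, persistence alone suffices, but invoking permanence does no harm.
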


The fact that weakly reversible networks with one linkage class
(i.e., with strongly connected reaction graph) are strongly
endotactic (Corollary~\ref{cor:one_l_class}) yields an easy
consequence.


\begin{theorem}\label{thm:OneLClass}
Every weakly reversible reaction network with exactly one linkage
class is permanent.
\end{theorem}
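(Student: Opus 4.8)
The plan is to derive Theorem~\ref{thm:OneLClass} as an immediate consequence of the two facts already in place: Corollary~\ref{cor:one_l_class}, that a weakly reversible network with a single linkage class is strongly endotactic, and Theorem~\ref{thm:main}, that every strongly endotactic network is permanent. Thus the proof is a short composition of earlier results, and essentially all that needs to be said is why the hypothesis of Corollary~\ref{cor:one_l_class} is met and what it then delivers.

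First I would translate the hypothesis: ``weakly reversible with exactly one linkage class'' means precisely that the directed reaction graph on the finite set of complexes is strongly connected. The one step with any content is the combinatorial check --- the content of Corollary~\ref{cor:one_l_class} --- that strong connectivity implies the strongly endotactic condition of Definition~\ref{d:endotactic}. For this, fix a nonzero covector $w$ on the species space and look at the functional $\langle w,\cdot\rangle$ on the complex set. Let $y$ be a complex minimizing it. Unless $w$ is orthogonal to every reaction vector (a degenerate case that the definition handles separately), some complex attains a strictly larger value; since the reaction graph is strongly connected there is a directed path from $y$ to such a complex, and the first reaction along that path has its reactant in the $w$-minimal set and its reaction vector strictly increasing $\langle w,\cdot\rangle$. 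This provides the ``inward-pointing reaction from the bottom tier'' that upgrades endotactic to \emph{strongly} endotactic; checking that this works for every $w$, with the reactant always chosen in the minimal tier, is exactly where strong connectivity is used and is the only place that needs care.

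Granting this, Corollary~\ref{cor:one_l_class} applies and the network is strongly endotactic; Theorem~\ref{thm:main} then gives permanence in the sense of Definition~\ref{d:persistent_permanent}: every trajectory starting in the open positive orthant is bounded and is eventually trapped in a compact subset of the open orthant that depends only on its stoichiometric compatibility class. That finishes the proof.

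I anticipate no real obstacle at this stage: the analytic machinery --- differential inclusions, the toric-jet estimates, the dimension-reduction projection, and the Birch-type argument --- is entirely inside the proof of Theorem~\ref{thm:main}, and the remaining work is the elementary graph-theoretic observation underlying Corollary~\ref{cor:one_l_class}. It is worth recording that Theorem~\ref{thm:OneLClass} is the permanence counterpart, for one-linkage-class networks, of the convergence results discussed above, and that it requires only weak reversibility rather than the complex-balanced condition.
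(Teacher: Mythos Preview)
Your proposal is correct and follows exactly the same route as the paper: the proof of Theorem~\ref{thm:OneLClass} in the paper is simply ``Follows from Corollary~\ref{cor:one_l_class} and Theorem~\ref{thm:main}.'' Your additional sketch of why Corollary~\ref{cor:one_l_class} holds is fine (modulo the paper's sign convention of using $\leq_w$-\emph{maximal} rather than minimal reactants; see Remark~\ref{rmk:switchSignEndo}), but since that corollary is already established earlier in the paper, the proof of Theorem~\ref{thm:OneLClass} itself needs only the one-line citation.
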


Theorem~\ref{thm:OneLClass} strengthens a result in which Anderson
proved persistence of weakly reversible systems with exactly one
linkage class under the assumption that all trajectories are
bounded~\cite{Anderson11}.  Our approach was significantly influenced
by distillation of ideas from Anderson's result, combined with
insights gained from~\cite{CNP}.

The class of dynamical systems considered here includes those known as
generalized mass-action systems, or power-law systems, studied in
biochemical systems theory~\cite{Savageau}.  More precisely, we prove
our results here in the more general setting of mass-action
differential inclusions, which we introduced in earlier
work~\cite{ProjArg}.  The crucial result from \cite{ProjArg} applied
here (in Section~\ref{s:results}) concerns families of differential
inclusions that are closed under certain projections, allowing us to
analyze a system in terms of systems of lower dimension.

In the course of this work (Section~\ref{s:extBirch}), we prove two
extensions of Birch's theorem, a well-known result in reaction network
theory~\cite{HornJackson} and algebraic statistics~\cite{Birch}
concerning intersections of certain affine spaces and manifolds
parametrized by monomials (Theorems~\ref{t:birch}
and~\ref{t:birchRestated}).  We show that Birch's theorem remains true
under slight perturbation (Theorem~\ref{thm:extBirch}), and at
infinity in suitable compactifications (Theorem~\ref{t:birchExt1}).

To connect the combinatorial (polyhedral) geometry of a reaction
diagram to the asymptotics of its dynamics (Section~\ref{s:jet}),
we develop jets and toric jets (Definition~\ref{d:jet}), in parallel with
the notion of ``jet frame'' from work by Miller and Pak \cite{MP} on
unfolding convex polyhedra.  Jets capture motion the bulk of which
proceeds in a fixed main direction, but is perturbed to first order in
an orthogonal direction, to second order in a third direction, and so
on.  Jets coherently tease apart the contributions of various
reactions to the gradient of pseudo-Helmholtz free energy along
infinite trajectories~(Proposition~\ref{p:domination}).

Birch's Theorem and jets come together (in Section~\ref{s:prelya}) to
show that for strongly endotactic networks, within each stoichiometric
compatibility class there exists a compact set outside of which the
pseudo-Helmholtz free energy function $\sum_{i\in\SS} (x_i \log
x_i - x_i)$ decreases along trajectories
(Theorem~\ref{thm:LyFncWorks_cpc}).  Our main results on persistence
and permanence, described earlier in this Introduction, follow in
Section~\ref{s:results}.

Examples in Section~\ref{s:ex} illustrate our results and explain the
limitations of our approach.  A further extended example in
Section~\ref{s:ped_intro} serves to introduce the main ideas, after
which we give precise, general definitions concerning reaction
networks and their accompanying mass-action differential inclusions
(Section~\ref{s:CRNT}).  Various conjectures related to the
\GACnospace, along with known partial results, are collected in
Section~\ref{s:conjs}.


\section{An illustrative example}\label{s:ped_intro}


Consider the following reaction network with two species $X$ and $Y$
and three reactions:
\begin{align}\label{ntwk:RLV}
  2X\stackrel{k_1}\too X
  \qquad
  0\stackrel{k_2}\too Y
  \qquad
  2Y\stackrel{k_3}\too X+Y,
\end{align}
where $k_1,k_2, k_3 \in \Rplus$ denote the reaction rate constants.
Network~\eqref{ntwk:RLV} is obtained by reversing all the reactions in
the well-known Lotka--Volterra reaction network.  Letting $x(t)$ and
$y(t)$ denote concentrations of $X$ and $Y$, respectively, at
time~$t$, network~\eqref{ntwk:RLV} defines the following system of
ordinary differential equations arising from mass-action kinetics:
\begin{equation}\label{deq1}
\begin{pmatrix} \dot{x} \\ \dot{y} \end{pmatrix}
  = k_1x^2\begin{pmatrix} -1 \\ 0 \end{pmatrix}
    + k_2\begin{pmatrix} 0 \\ 1 \end{pmatrix}
    + k_3y^2\begin{pmatrix} 1 \\ -1 \end{pmatrix}.
\end{equation}

In this section, we introduce the main ideas of our work by explaining
how to prove that network~\eqref{ntwk:RLV} taken with mass-action
kinetics is \emph{permanent}: there exists a compact set $K \subseteq
\Rplus^2$ such that every trajectory of the dynamical
system~\eqref{deq1} in $\Rplus^2$ eventually remains in $K$.  We begin
with the following assertion.

\begin{claim}\label{claim:ex}
There exists a compact set $K \subseteq \Rplus^2$ (that depends on
$k_1, k_2, k_3$) such that outside $K$, the function
$$%
  g(x,y) = x \log x - x + y \log y - y
$$
on $\Rplus^2$ is strictly decreasing along trajectories
$\big(x(t),y(t)\big)$ of~\eqref{deq1} except at an equilibrium.
\end{claim}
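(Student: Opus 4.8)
The plan is to compute the derivative $\dot g = \frac{d}{dt} g(x(t),y(t))$ explicitly along~\eqref{deq1} and show it is negative outside a compact set. Since $\partial g/\partial x = \log x$ and $\partial g/\partial y = \log y$, the chain rule gives
\begin{equation}\label{eq:gdot}
  \dot g = (\log x)\,\dot x + (\log y)\,\dot y
         = -k_1 x^2 \log x + k_2 \log y + k_3 y^2(\log x - \log y).
\end{equation}
I want to exhibit a compact $K\subseteq\Rplus^2$ such that $\dot g < 0$ on $\Rplus^2\setminus K$ (with equality only possible at an equilibrium, which here means all three monomial vectors summing to zero). The orthant $\Rplus^2$ is unbounded both toward $\infty$ in either coordinate and toward the boundary where $x\to 0$ or $y\to 0$, so the verification splits into these regimes.

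The key step is a domination argument comparing the three terms of~\eqref{eq:gdot} according to which of $x,y$ is large or small. When both $x$ and $y$ are bounded away from $0$ and $\infty$ we are inside a compact set, so there is nothing to prove; thus fix a large parameter $M$ and a small parameter $\varepsilon$ and consider the complementary regions. If $x$ is very large: the term $-k_1 x^2\log x$ is large negative of order $x^2\log x$, while $k_3 y^2\log x$ is the only competing positive term of comparable growth in $\log x$; but when $y$ is also large one must check that $-k_1x^2\log x$ still dominates $k_3 y^2(\log x-\log y)$, which follows because along any sequence heading to infinity either $x^2$ eventually dwarfs $y^2$ or, if $y$ grows faster, then $\log x - \log y < 0$ so that term is actually helping. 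If $y$ is very large (and $x$ moderate): $k_3 y^2(\log x-\log y) \approx -k_3 y^2\log y \to -\infty$ dominates the bounded $k_2\log y$ growth. If $y\to 0$ (with $x$ moderate): $\log y\to-\infty$, so $k_2\log y\to-\infty$, and one checks the $k_3 y^2(\log x-\log y)\sim -k_3 y^2\log y$ term is $o(1)$ and the $-k_1x^2\log x$ term is bounded, so $\dot g\to-\infty$. If $x\to 0$ (with $y$ moderate): $-k_1 x^2\log x\to 0^+$ but this is a vanishing positive contribution, while $k_3 y^2\log x\to-\infty$ dominates, and $k_2\log y$ is bounded; so again $\dot g\to-\infty$. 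The remaining corners (e.g. $x\to 0$ and $y\to\infty$ simultaneously, or $x,y\to\infty$ together) are handled by the same casework on signs of $\log x-\log y$.

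The main obstacle is the mixed regime where $x$ and $y$ both go to infinity (or both to the boundary) at comparable rates, so that no single term obviously dominates: here one cannot argue termwise but must track the sign of $\log x - \log y$ and the relative sizes of $x^2$ and $y^2$ along the approach to infinity. The clean way to organize this — and the conceptual point the rest of the paper generalizes — is to parametrize directions of escape and, along each such ray, identify the ``tier'' of leading terms; the reaction diagram of~\eqref{ntwk:RLV} being (strongly) endotactic is exactly what guarantees that in every escape direction the dominant contribution to $\dot g$ comes from a reaction pulling inward, i.e. has negative sign. So after the elementary casework above, I would package the argument as: for each unit direction $v$ in which $(x(t),y(t))$ could escape every compact set, the leading-order behavior of~\eqref{eq:gdot} is governed by the reactions whose reactant complex is extremal in the direction $v$, and strong endotacticity forces the corresponding coefficient of the top-order term to be strictly negative — yielding $\dot g<0$ outside a sufficiently large compact $K$, with $\dot g = 0$ only at an equilibrium. $\qed$
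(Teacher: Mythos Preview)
Your approach is correct in spirit and parallels the paper's informal argument in Section~\ref{s:ped_intro}: both compute $\dot g$ as a sum of per-reaction contributions and argue that the dominant ones are negative outside a compact set. The organizational difference is that the paper parametrizes via \emph{toric rays}, writing every point of $\Rplus^2$ as $(\theta^a,\theta^b)$ for a unit vector $(a,b)$ and $\theta \geq 1$; this converts your regime-by-regime casework on large/small $x,y$ into a single analysis over the compact unit circle of directions, and decomposes each reaction's contribution to $\dot g$ into the product of an inner product $\langle(a,b),(r-p,s-q)\rangle$ and a monomial $\theta^{\langle(a,b),(p,q)\rangle}$ (the ``pull''). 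Your formula~\eqref{eq:gdot} is exactly this sum after substituting $x=\theta^a$, $y=\theta^b$.

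Where your sketch is genuinely incomplete is the uniformity in the ``mixed regime'', which you correctly flag but do not resolve. The paper is more precise here: it identifies exactly three degenerate directions, $(-1,0)$, $(0,-1)$, and $(1,1)$, along which one of the dominant reactions has zero inner product with the direction (so contributes nothing to the sign of $\dot g$), while in nearby perturbed directions $(-\varepsilon,-1)$, etc., that same reaction becomes \emph{draining} (positive pull). Your casework (``either $x^2$ dwarfs $y^2$, or $\log x - \log y < 0$ helps'') does not show that the cutoff $\theta$ beyond which $\dot g<0$ can be chosen independently of $\varepsilon$. The paper's resolution---which your final paragraph anticipates---is the jet-frame argument: near a degenerate direction, the draining reaction's pull is small because its \emph{inner product} is near zero, and this smallness dominates the failure of monomial domination uniformly. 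That second-order analysis is exactly what Section~\ref{s:jet} formalizes, and is the one substantive step your sketch defers.
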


By itself, Claim~\ref{claim:ex} does not imply that the dynamical
system~\eqref{deq1} is permanent.  For instance, consider the figure
$$%
  \includegraphics[width=1.2in]{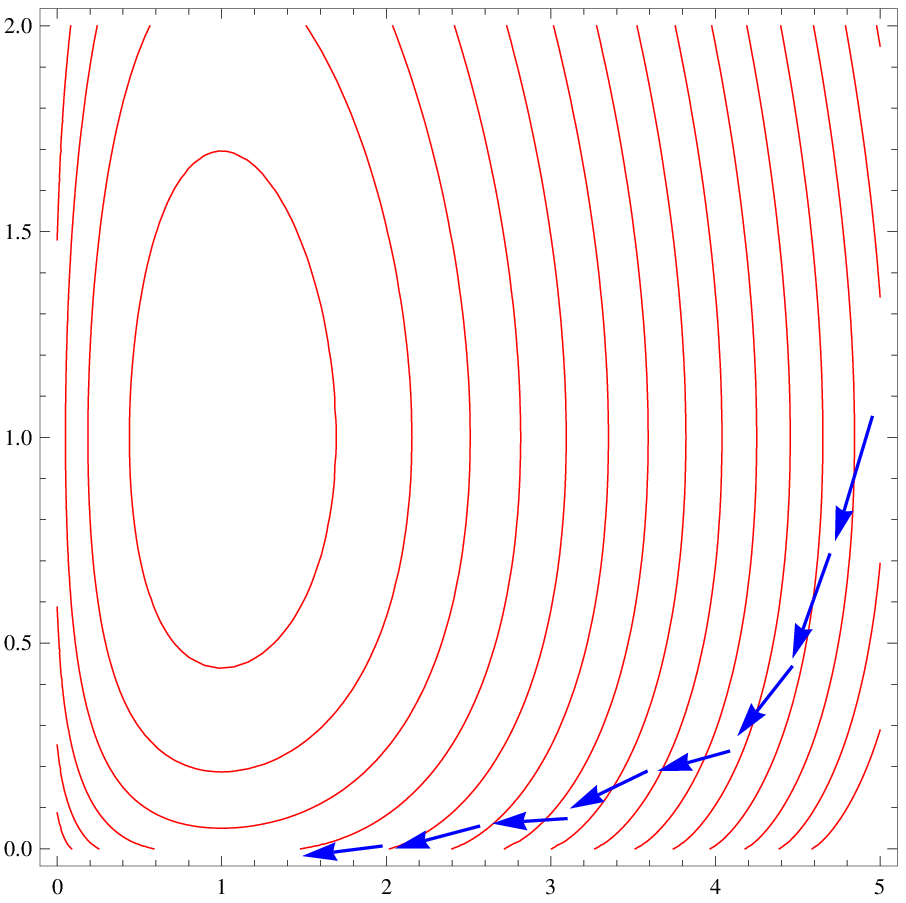}
$$
in which level sets of $g$ are indicated in red, with interior level
sets correspond to lower values of~$g$.  The function $g$ decreases
along the trajectory indicated in blue, but this trajectory is not
even persistent---that is, at least one coordinate (in this case, the
$y$-coordinate) approaches zero.

Nevertheless, Claim~\ref{claim:ex} is enough to establish that the
origin $(0,0)$ is repelling
(Definition~\ref{d:persistent_permanent}.2) and that all trajectories
of~\eqref{deq1} are bounded.  These two properties, along with good
behavior under projection, suffice to obtain our desired permanence
result.  This follows from our earlier work~\cite{ProjArg}, as
explained in Sections~\ref{sub:pers}--\ref{sub:perm}, particularly
Lemma~\ref{l:projArg}.  As a consequence of permanence, the blue
trajectory depicted here cannot be a trajectory~of~\eqref{deq1}.

Our explanation of Claim~\ref{claim:ex} involves a ``proof by
picture''.  \emph{Concentration space} refers to the space in which
the trajectories of the dynamical system~\eqref{deq1} evolve,
excluding points where $X$ or~$Y$ has concentration zero; thus
concentration space is $\Rplus^2$.  \emph{Energy space} has
coordinates $u = \log x$ and $v = \log y$, so energy space is
(another) $\bR^2$.  Concentration space and energy space are
diffeomorphic via the Lie group isomorphism $(x,y) \mapsto (\log x,
\log y)$.  This map sends the identity $(1,1)$ to the origin $(0,0)$,
and the curve parametrized by $\theta \mapsto (\theta^a,\theta^b)$ in
concentration space to the ray from the origin in direction $(a,b)$ in
energy space:
$$%
\begin{array}{ccc}
  \begin{array}{c}
  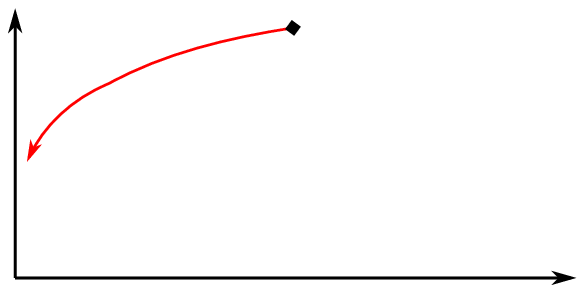\\
  \end{array}
&
  \begin{array}{c}
  \stackrel{(\log,\log)}{\fillrightmap}
  \end{array}
&
  \begin{array}{c}
  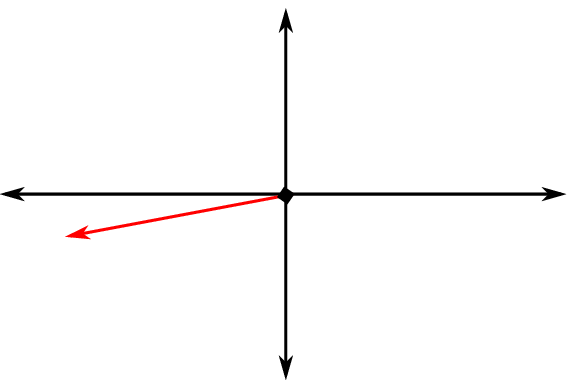\\
  \end{array}
\\
  \text{toric ray in concentration space}
&
&
  \text{a vector in energy space}
\end{array}
$$
Such curves in concentration space, obtained by exponentiating rays in
energy space, are central to our analysis.  The curve parametrized by
$(\theta^a, \theta^b)$ is the \emph{toric ray} in direction~$(a,b)$.

\begin{remark}\label{r:exponentiation}
In Lie theory or differential geometry, the inverse map from energy
space to concentration space is the usual exponential map from the
tangent space at the identity $(1,1)$ to the group.  Thus toric rays
are the positive parts of 1-parameter subgroups of concentration
space, and they can also be thought of as geodesics.
\end{remark}

For every point $(x,y) \in \Rplus^2$, the gradient of $g$ at $(x,y)$
is parallel to the direction of the toric ray that passes through that
point.  This is because $\nabla g(x,y) = (\log x, \log y)$, so that
\begin{align}\label{eqngtoric}
\nabla g(\theta^a,\theta^b)=(\log \theta)(a,b) .
\end{align}

Combinatorial information about a reaction network is represented in
the \emph{space of complexes} by the reaction diagram, which is a
geometric representation of the reaction network.  The reaction
diagram for network~\eqref{ntwk:RLV} is depicted here:
$$%
  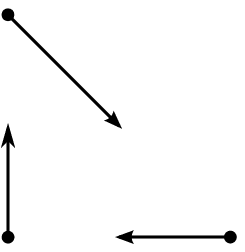
$$
The reaction $pX + qY \stackrel k\too rX + sY$, for example, is
represented by the arrow from the complex $(p,q)$ to the complex
$(r,s)$.  (Other authors have sometimes used the term ``space of
complexes'' to refer to a real vector space of dimension equal to the
number of vertices of the reaction graph.  Readers should be aware
that our usage is different.  For us, the dimension of the space of
complexes equals the number of species.)

The right-hand side of the differential equations~\eqref{deq1}
consists of a sum in which the summand arising from a reaction of the
form $pX + qY \stackrel k\too rX + sY$ is:
\begin{align}\label{contr}
kx^py^q \begin{pmatrix} r-p \\ s-q \end{pmatrix}.
\end{align}
The relation between concentration space and the space of complexes
depicted in Figure~\ref{figrlv} is central to our analysis.
Specifically, along a toric ray parametrized by $(\theta^a,
\theta^b)$, the monomial contribution $x^py^q$ of the reaction $pX +
qY \stackrel k\too rX + sY$ equals $\theta^{\< (a,b), (p,q)\>}$.  The
exponent $\<(a,b), (p,q)\>$ has a geometric interpretation in the
space of complexes as the value of the linear functional $ax+by$ at
the point $(p,q)$.  We now ask: as $\theta\to+\infty$, which reactions
are dominant, that is, which reactions yield the largest-magnitude
contribution~\eqref{contr}?  The answer is that the dominant reactions
are the ones with maximal inner product $\< (a,b), (p,q)\>$, because
their monomial contributions grow the fastest as $\theta \to + \infty$
and thus overwhelm the lesser monomial contributions.  The directions
in which these reactions ``pull'' (that is, the vector contributions
to the differential equations~\eqref{deq1}) remain unchanged
along the toric ray and are given by their \emph{reaction vectors}
$\begin{pmatrix} r-p \\ s-q
\end{pmatrix}$ as in~\eqref{contr}.

\begin{figure}
$$%
  \begin{array}{c}
  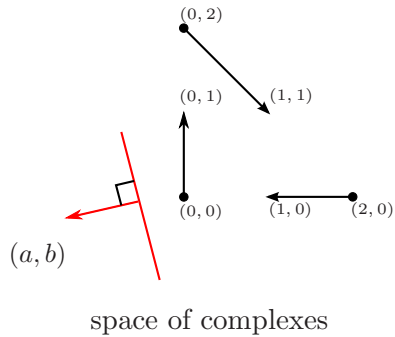\\[1ex]
  \text{space of complexes}\\[-3ex]
  \end{array}
$$
\caption{\label{figrlv}``Proof by picture'' for permanence of the
reverse Lotka-Volterra system.  The toric ray corresponding to the
depicted vector pulls hardest on the source of the vertical~reaction.}
\end{figure}

We emphasize this key point: along every toric ray, there exist some
asymptotically dominant reactions that determine the dynamics.  These
reactions can be determined purely combinatorially: they are the
reactions $pX + qY \to rX + sY$ whose \emph{reactant complexes}
$(p,q)$ attain the maximal inner product with the direction $(a,b)$ of
the toric ray.

Returning to Claim~\ref{claim:ex}, the
time derivative $\frac{d}{dt} g(x(t),y(t))$ of $g$ at a trajectory
point at time $t = t_0$ of the form $(x(t_0),y(t_0)) =
(\theta^a,\theta^b)$ is determined by the chain rule~to~be
\begin{align}\label{eq:g-deriv}
\frac{d}{dt} g(x(t),y(t))\Big|_{t=t_0}
  = \big\<\nabla g(\theta^a, \theta^b), P(\theta^a, \theta^b)\big\>
\end{align}
where $P(x,y)$ denotes the right-hand side of the differential
equations~\eqref{deq1} evaluated at the point $(x,y)$.  By linearity,
we can analyze separately the contribution to the
derivative~\eqref{eq:g-deriv} of each reaction $pX + qY \stackrel k\too
rX + sY$.  The contribution of such a reaction, from
equations~\eqref{eqngtoric} and~\eqref{eq:g-deriv}, is
\begin{align}\label{eq:contr}
\<(a,b),(r-p,s-q)\>
\theta^{\<(a,b),(p,q)\>} k\log \theta.
\end{align}

So far, everything stated above holds for any reaction network with
two species, and can be appropriately generalized for more species.
Now we appeal to the fact that the reactions in the reaction diagram
under consideration ``point inward'' (i.e., the network is
\emph{strongly endotactic}; see
Definition~\ref{d:endotactic}.\ref{d:strong_endotactic}).

The term $\log\theta$ in the contribution~\eqref{eq:contr} is common
to every reaction, so it is not significant to the analysis of the
sign of the derivative $\frac{d}{dt} g(x(t),y(t))$.  On the other hand,
the reaction rate constants $k_1, k_2, k_3 \in \Rplus$ are significant.
However, it turns out that we can ignore the rate constants, and
instead simply analyze
$$%
  \pull = \<(a,b),(r-p,s-q)\>\theta^{\<(a,b),(p,q)\>},
$$
which we call the \emph{pull} (see Definition~\ref{d:pull}) of the
reaction along the toric ray in direction $(a,b)$.  Details appear in
Sections~\ref{s:prelya} and~\ref{s:results}, particularly in the
proof of Theorem~\ref{thm:LyFncWorks_cpc}.

Suppose we could verify by inspection of the reaction diagram that for
every direction $(a,b)$, the inner products in the pulls of all
dominant reactions along toric rays in direction $(a,b)$ satisfy the
inequality $\<(a,b),(r-p,s-q)\> < 0$.
By compactness of the unit circle, whose points are thought of as
directions, we could choose a uniform cutoff $\theta$ large enough so
that along every toric ray, past this cutoff the monomial
contributions of dominant reactions overwhelm the contributions of all
other reactions.  Indeed, we could ensure this even after including the
effects of reaction rate constants.  Hence, outside the compact set
$$%
  K_\theta= \big\{ (\theta_0^a,\theta_0^b)\mid (a,b)\text{ is a unit
  vector in energy space and }\theta_0\in[1,\theta] \big\}
  \subseteq \Rplus^2
$$
in concentration space, the
time derivative of~$g$ would be negative.

Indeed, in generic directions $(a,b)$, the dominant reactions along
the toric ray in direction $(a,b)$ do satisfy the required inequality.
However, this fails in precisely three directions, namely, $(-1,0)$,
$(0,-1)$, and $(1,1)$.

For instance, consider the direction $(a,b) = (0,-1)$.  There exist
two dominant reactions: one ``sustaining'' reaction $0
\stackrel{k_2}\too Y$ with $\<(a,b),(r-p,s- q)\> < 0$, and another
reaction $2X \stackrel{k_1}\too X$ with $\<(a,b),(r-p,s- q)\> = 0$.
The reaction $2X \stackrel{k_1}\too X$ makes no contribution to the
derivative of $g$ in direction $(0,-1)$.  However, consider a nearby
direction $(-\varepsilon, -1)$ for some small $\varepsilon > 0$.  In
this direction, the reaction $2X \stackrel{k_1}\too X$ is now
``draining'': its pull is strictly greater than $0$.  It is true that
if $\varepsilon > 0$ is fixed, then along the toric ray in direction
$(-\varepsilon, -1)$, this reaction is eventually dominated by the
sustaining reaction, because its monomial term is now smaller.
However, as $\varepsilon$ gets smaller, the value of the cutoff
$\theta$ after which this domination of monomials occurs must become
arbitrarily large.

This is problematic, because the compact set $K_\theta$ requires a
single value of~$\theta$ to work as a cutoff for every direction
$(a,b)$.  To accomplish this, we turn to a second observation: for
sufficiently small $\varepsilon > 0$, the inner product of the
reaction vector of \mbox{$2X \stackrel{k_1}\too X$} with the direction
$(-\varepsilon, -1)$ of the toric ray is, although positive, near
zero.  A more detailed analysis along these lines, using information
from both the monomial and the inner product, accomplishes
Claim~\ref{claim:ex}.  This approach is developed in
Section~\ref{s:jet} via the technology of jets and jet frames.
These allow us to reduce the analysis of the sign of the
time derivative of~$g$ to a combinatorial calculation on the reaction
diagram.

Returning to the example, the family of directions $(-\varepsilon,-1)$
as $\varepsilon \to 0^+$ corresponds to the jet frame
$\big((0,-1),(-1,0)\big)$.  Two reactions dominate along the toric ray
in direction $(0,-1)$, namely $2X \to X$ and $0 \to Y$.  As for the
second direction $(-1,0)$ of the jet frame,
only the sustaining reaction $0 \to Y$ is dominant in that direction.
The pull of the draining reaction $2X \to X$ is dominated by the pull
of the sustaining reaction $0 \to Y$ in a uniform manner in all
directions $(-\varepsilon,-1)$ for small $\varepsilon > 0$, and that
produces the required uniform~cutoff~$\theta$.

In general,
for strongly endotactic reaction networks,
along every jet frame the pull of each draining reaction is dominated
by the pull of some sustaining reaction~(Proposition~\ref{p:domination}).
This result is key to obtaining a general version of
Claim~\ref{claim:ex}, which we then apply to prove our main results.


\section{Reaction network theory}\label{s:CRNT}

In this section, we recall the definitions of reaction networks and
their associated mass-action differential inclusions, following the
notation in our earlier work~\cite{ProjArg}.

\subsection{Reaction networks and reaction systems}\label{sub:networks}


\begin{definition}\label{d:crn}
A \emph{reaction network} $(\SS, \CC, \RR)$ is a triple of finite
sets: a set $\SS$ of \emph{species}, a set $\CC \subseteq \bR^\SS$ of
\emph{complexes}, and a set $\RR \subseteq \CC \times \CC$ of
\emph{reactions}.  The \emph{reaction graph} is the directed graph
$(\CC,\RR)$ whose vertices are the complexes and whose directed edges
are the reactions.  A reaction $r = (y,y') \in \RR$, also written $y
\to y'$, has \emph{reactant} $y = \source{r} \in \bR^\SS$,
\emph{product} $y' = \target{r} \in \bR^\SS$, and \emph{reaction
vector}
$$%
  \flux{r} = {\target{r} - \source{r}} = y' - y.
$$
A \emph{linkage class} is a connected component of the reaction graph.
The \emph{reaction diagram} is the realization $(\CC,\RR) \to \bR^\SS$
of the reaction graph that takes each reaction $r \in \RR$ to the edge
from $\source r$ to $\target r$.  The \emph{reactant polytope} is the
convex hull ${\op{Conv}\{y\in\bR^\SS\mid y \to y'\in\RR\}}$ of the
reactant complexes.
\end{definition}

Beginning in the following example, we follow the usual conventions of
depicting a network by its reaction graph or reaction diagram and
writing a complex as, for example, $2A+B$ rather than $y=(2,1)$.


\begin{example}\label{ex:end_intro}
The following network has two species ($A$ and $B$), five complexes,
four reactions (each indicated by a unidirectional arrow), and two
linkage classes:
$$%
  2A \lra A+B \quad \quad B \to 0 \to 2B .
$$
The reaction polytope is the convex hull of the four reactant complexes $(2,0),(1,1),(0,1)$, and $(0,0)$. 
\end{example}


\begin{remark}\label{rmk:genl_crn}
The \CRN theory literature usually imposes the following requirements
for a reaction network.
\begin{itemize}
\item%
Each complex takes part in some reaction: for all $y \in \CC$ there
exists $y' \in \CC$ such that $(y,y') \in \RR$ or $(y',y) \in \RR$;
and
\item%
no reaction is trivial: $(y,y)\notin \RR$ for all $y \in \CC$.
\end{itemize}
Definition~\ref{d:crn} does \emph{not} impose these conditions: our
reaction graphs may include isolated vertices or self-loops.  In our
earlier work, we dropped these conditions to ensure that the
projection of a network---obtained by removing certain
species---remains a network under our definition even if some
reactions become trivial~\cite{ProjArg}.  In addition, like Craciun,
Nazarov, and Pantea~\cite[\S 7]{CNP}, we allow arbitrary real
complexes $y \in \bR^\SS$.  Thus our setting is more general than that
of usual \CRNsNoSpace, whose complexes $y \in \NN^\SS$ are nonnegative
integer combinations of species, as in the next definition.  The ODE
systems defined in \S\ref{s:m-a} that result from real
complexes have been studied over the years and called ``power-law
systems''.
\end{remark}


\begin{definition}\label{d:network}
A reaction network $(\SS,\CC,\RR)$ is
\begin{enumerate}
\item%
\emph{chemical} if $\CC \subseteq \NN^\SS$;
\item%
\emph{reversible} if the reaction graph of the network is undirected:
a reaction $(y,y')$ lies in~$\RR$ if and only if its reverse reaction
$(y',y)$ also lies in~$\RR$;
\item%
\emph{weakly reversible} if every linkage class of the network is
strongly connected.
\end{enumerate}
\end{definition}

\begin{definition}\label{d:stoichiometry}
The \emph{stoichiometric subspace} $H$ of a network is the span of its
reaction vectors.  The \emph{dimension} of a network is the dimension
of its stoichiometric subspace $H$.  For a positive vector $x_0 \in
\Rplus^\SS$, the \emph{invariant polyhedron} of $x_0$ is the
polyhedron
$$%
  \invtPoly = (x_0 + H) \cap \Rnn^\SS.
$$
This polyhedron is also referred to as the \emph{stoichiometric
compatibility class} in the \CRN theory literature~\cite{Fein87}.
\end{definition}

\begin{example}\label{ex:end}
Recall the network from Example~\ref{ex:end_intro} with reactions $2A
\lra A+B$ and $B \to 0 \to 2B$.  This is a two-dimensional chemical
reaction network that is not weakly reversible.  For every choice of $x_0\in\Rplus^2$, the corresponding invariant
polyhedron is the positive orthant: $\invtPoly = \Rnn^2$.
\end{example}

Another polyhedron of interest appears in the next definition.  For an
introduction to polyhedral geometry, we refer the reader to the book
by Ziegler~\cite{Ziegler}.

\begin{definition}\label{d:reactant polytope}
For a positive integer $n\in\Zplus$, a \emph{polytope} in $\bR^n$ is
the convex hull of a finite set of points in $\bR^n$.  The
\emph{reactant polytope} of reaction network $\GG$ is the convex hull
of the reactant complexes $\source{\RR} \subseteq \bR^\SS$.
\end{definition}


We now turn to the concept of a reaction system.

\begin{definition}\label{d:reaction_sys}
Write $\intsBdd = \big\{[a,b] \mid 0 < a \leq b < \infty\big\}$ for
the set of compact subintervals of~$\Rplus$.  Let $\GG$ be a reaction
network.  A \emph{tempering} is a map $\kappa: \RR \to \intsBdd$ that
assigns to each reaction a nonempty compact positive interval.  A
\emph{confined reaction system} consists of a reaction network $\GG$, a
tempering $\kappa$, and an invariant polyhedron $\invtPoly$ of the
network.
\end{definition}

\subsection{Strongly endotactic \CRNs}\label{str.endotactic}

Endotactic \CRNsNoSpace, a generalization of weakly reversible
networks, were introduced by Craciun, Nazarov, and Pantea~\cite[\S
4]{CNP}.  We introduced strongly endotactic networks, a subclass of
endotactic networks, in~\cite{ProjArg}.  We now recall the
definitions.


\begin{definition}\label{d:partialOrder}
The standard basis of $\bR^\SS$ indexed by~$\SS$ defines a canonical
inner product $\< \cdot,\cdot \>$ on $\bR^\SS$ with respect to which
the standard basis is orthonormal.  Let $w \in \bR^\SS$.
\begin{enumerate}
\item%
The vector $w$ defines a preorder on $\bR^\SS$, denoted by $\leq_w$, in
which
$$%
  y \leq_w y' \ \iff\ \< w,y \> \leq \< w,y' \>.
$$
Write $y <_w y'$ if $\< w,y \> < \< w,y' \>$.
\item%
For a finite subset $Y \subseteq \bR^\SS$, denote by $\init wY$ the set
of $\leq_w$-maximal elements of~$Y$:
$$%
  \init wY = \big\{y \in Y \mid \< w,y \> \geq \< w,y'
  \> \text{ for all } y' \in Y \big\}.
$$
\item%
For a reaction network $(\SS,\CC,\RR)$, the set $\RR_w \subseteq \RR$ of
$w$-\emph{essential reactions} consists of those whose reaction
vectors are not orthogonal to~$w$:
$$%
  \RR_w = \big\{r \in \RR \mid \< w,\flux{r} \> \neq 0\big\}.
$$
\item%
The $w$-\emph{support} $\supp w{\SS,\CC,\RR}$ of the network is the
set of vectors that are $\leq_w$-maximal among reactants of
$w$-essential reactions:
$$%
  \supp w{\SS,\CC,\RR} = \init w{\source{\RR_w}}.
$$
\end{enumerate}
\end{definition}

\begin{remark}\label{rmk:switchSignEndo}
In order to simplify the computations in Section~\ref{s:jet}, we
differ from the usual convention~\cite{CNP,Pantea}, by letting
$\init wY$ denote the $\leq_w$-maximal elements rather than the
$\leq_w$-minimal elements.  Accordingly, the inequalities in
Definition~\ref{d:endotactic} are switched, so our definition of
endotactic is equivalent to the usual one.
\end{remark}


Before presenting Definition~\ref{d:endotactic}, 
we provide some underlying geometric intuition,
first in terms of 1-dimensional
projections (Remarks~\ref{rmk:endo1D} and~\ref{rmk:project}) and then
via reactant polytopes (Remark~\ref{rmk:endo_polytope}).  A third
interpretation via jet frames appears later in our work
(Lemma~\ref{l:endjets} and Proposition~\ref{p:stendjets}).


\begin{remark}\label{rmk:endo1D}
For a 1-dimensional network,
whose reaction diagram $(\CC,\RR)$ lies on a line in $\bR^\SS$, let $w
\in \bR^\SS$ be a nonzero vector that generates the stoichiometric
subspace.  Every nontrivial reaction $y \to y'$ either \emph{points to
the right} (points along $w$) or \emph{points to the left} (points
along $-w$).

\begin{enumerate}
\item%
$\GG$ is endotactic if and only if each nontrivial reaction with a
leftmost ($\leq_w$-minimal) reactant points to the right, and each
nontrivial reaction with a rightmost ($\leq_w$-maximal) reactant
points to the left~(see the bottom of Figure~\ref{fig:end}).

\item%
A 1-dimensional endotactic network that lies on a line is strongly
endotactic if and only if there exists a nontrivial reaction $y \to
y'$ (which necessarily points to the right) whose reactant $y$ is a
leftmost reactant and additionally there exists a nontrivial reaction
$z \to z'$ whose reactant is rightmost.  See the bottom of
Figure~\ref{fig:end}.
\end{enumerate}
\end{remark}


\begin{remark}\label{rmk:project}
We learned from Craciun and Pantea the following 
intuition behind 
Definition~\ref{d:endotactic} in terms of 1-dimensional projections.
Consider a reaction network $\GG$ and a line generated by a nonzero
vector $w\in\bR^\SS$.  The orthogonal projection of the reaction
diagram $(\CC,\RR)$ onto the line is the reaction diagram of a
dimension 0 or 1 network whose complexes are the projections $\<w,y\>
w$ for $y \in \CC$.  The network $\GG$ is endotactic (respectively,
strongly endotactic) if and only if for all vectors $w\in\bR^\SS$ that
are not orthogonal to the stoichiometric subspace, the projection of
the network onto the line generated by $w$ is endotactic
(respectively, strongly endotactic).  See Figure~\ref{fig:end}.  The
dual picture to these ideas was explained
in~\cite[Proposition~4.1]{CNP} by way of the so-called
``parallel~sweep~test.''
\end{remark}


\begin{figure}[ht]
\mbox{}\\[-15ex]\mbox{}
\begin{align*}
  \begin{xy}<14mm,0cm>:
  (-1.75,0) 	="left" *+!R{ }  *{}; 
  (6,0) 	="right" *+!R{ }  *{}; 
  (0,-.25) 	="down" *+!R{ }  *{}; 
  (0,3.75) 	="up" *+!R{ }  *{}; 
  (1,1) 	="y1" *{\bullet} *+!U{y_1}  *{}; 
  (1,2) 	="y1'" *+!D{y_1'  }  *{}; 
  (2.25,1) 	="y2"*{\bullet} *+!UR{y_2 }  *{}; 
  (3,2) 	="y2'" *+!DL{y_2'}  *{}; 
  (5.5,1.5) 	="y4" *{\bullet}*+!L{ y_3}  *{}; 
  (4.75,0.75) 	="y4'" *+!R{y_3' }  *{}; 
  (-1.75,1.6) 	="w1" *+!UL{}  *{}; 
  (-.75,1.6) 	="w2" *+!R{ }  *{}; 
  (-0.25,0.45) 	="X1" *+!R{ }  *{}; 
  (6.4,0.45) 	="X2" *+!R{ }  *{}; 
  (6.4,2.65) 	="X3" *+!UR{ G }  *{}; 
  (-.25,2.65) 	="X4" *+!R{ }  *{}; 
  (-.25,-.9) 	="Y1" *+!R{ }  *{}; 
  (6.4,-.9) 	="Y2" *+!R{ }  *{}; 
  (6.4, -.15) 	="Y3" *+!UR{ N }  *{}; 
  (-.25,-.15) 	="Y4" *+!R{ }  *{}; 
  (1,-.5) 	="y1proj" *{\bullet}*+!R{\widetilde{y_1} =\widetilde{y_1}' ~ }*{}; 
  (1,-.57) 	="y1proj_2" *+!U{\circlearrowleft }  *{}; 
  (2.25,-.5) 	="y2proj" *{\bullet}*+!R{\widetilde{y_2} }  *{}; 
  (3,-.5) 	="y2'proj" *+!L{\widetilde{y_2}'}  *{}; 
  (5.5,-.5) 	="y4proj" *{\bullet}*+!L{\widetilde{y_3}}  *{}; 
  (4.75,-.5) 	="y4'proj" *+!R{\widetilde{y_3}'}  *{}; 
  {\ar "y1";"y1'"*{}  }; 		
  {\ar "y2";"y2'"*{}  }; 		
  {\ar "y4";"y4'"*{}  }; 	
  {\ar "y2proj";"y2'proj"*{}  }; 		
  {\ar "y4proj";"y4'proj"*{}  };
  {\ar@{-->}^{w}"w1";"w2"  *{}  }; 		
  "X1";"X2" **\dir{.};
  "X2";"X3" **\dir{.};
  "X3";"X4" **\dir{.};
  "X4";"X1" **\dir{.};
  "Y1";"Y2" **\dir{.};
  "Y2";"Y3" **\dir{.};
  "Y3";"Y4" **\dir{.};
  "Y4";"Y1" **\dir{.};
  \end{xy}
\mbox{}\\[-8ex]\mbox{}
\end{align*}
\caption{At the top, we depict a direction vector $w = (1,0)$ and the
reaction diagram of a reaction network $G$ with three reactions $y_i
\to y_i'$.  At the bottom is the reaction diagram of the projection of
$G$ to the line generated by $w$.  This 1-dimensional network $N$ is
endotactic as explained in Remark~\ref{rmk:endo1D}: the leftmost
reactant of the nontrivial reactions is $\widetilde{y_2}$ and
$\widetilde{y_2} \to \widetilde{y_2}'$ points to the right, and the
rightmost reactant complex of the nontrivial reactions is
$\widetilde{y_3}$ and $\widetilde{y_3} \to \widetilde{y_3}'$ points to
the left.  So, $G$ is $w$-endotactic (but not endotactic: consider the
vector $w'=(-1,1)$).  $G$ is not strongly endotactic because
$\widetilde{y_1}$ is the unique leftmost reactant of $N$ but
$\widetilde{y_1} \to \widetilde{y_1}'$ does not point to the right.
\label{fig:end}}
\end{figure}
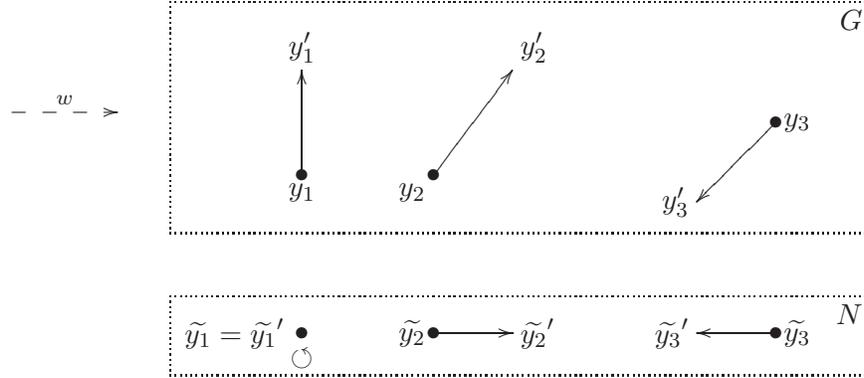


\begin{remark}\label{rmk:endo_polytope}
A second geometric interpretation of the strongly endotactic condition
is in terms of the reactant polytope~$Q$, which we recall is the
convex hull of $\source{\RR}$.  We say that a reaction $y \to y'$
\emph{points out of} a set $P$ if the line segment from $y$ to $y'$
intersects $P$ only at the point~$y$.  A network is strongly
endotactic if and only if (1)~no reaction with reactant on the
boundary of $Q$ points out of $Q$, and (2)~for all vectors $w$ that
are not orthogonal to the stoichiometic subspace, the $\leq_w$-maximal
face of $Q$ contains a reactant $y$ such that there exists a
nontrivial reaction $y \to y'$ that points out of the face (either
along the boundary of $Q$ or into the relative interior of $Q$).  See
Examples~\ref{ex:from_fig} and~\ref{ex:str_end}.
\end{remark}


\begin{definition}\label{d:endotactic}
Fix a reaction network $\GG$.
\begin{enumerate}
\item%
The network $\GG$ is $w$-\emph{endotactic} for some $w \in \bR^\SS$ if
$$%
  \< w,\flux{r} \> < 0
$$
for all $w$-essential reactions $r \in \RR_w$ such that $\source{r}
\in \supp w{\SS,\CC,\RR}$.

\item%
The network $\GG$ is \emph{$W$-endotactic} for a subset $W \subseteq
\bR^\SS$ if $\GG$ is $w$-endotactic for all vectors $w \in W$.

\item%
The network $\GG$ is \emph{endotactic} if it is $\bR^\SS$-endotactic.

\item\label{d:strong_endotactic}%
$\GG$ is \emph{strongly endotactic} if it is endotactic and for every
vector $w$ not orthogonal to the stoichiometric subspace of
$\GG$, there exists a reaction $r = (y \to y')$ in $\RR$ such that
\begin{enumerate}[(i)]
\item%
$y >_w y'$ (i.e., $\<w, \flux{r}\> < 0 $) and
\item%
$y$ is $\leq_w$-maximal among all reactants in $\GG$: $y \in
\init{w}{\source{\RR}}$.

\end{enumerate}
\end{enumerate}
\end{definition}


\begin{example}\label{ex:from_fig}
For the network $G$ in Figure~\ref{fig:end}, the reactant polytope $Q$
is the convex hull of the reactants $y_1, y_2, y_3$ (labeled by
$\bullet$), and both reactions $y_1 \to y_1'$ and $y_3 \to y_3'$ point
out of the triangle $Q$.  Thus $G$ is not strongly endotactic.
\end{example}

\begin{example}\label{ex:str_end}
The network
$$%
  0 \to 3A+B \quad \quad 2A \to B \quad \quad 2B \to A+B
$$
is strongly endotactic (thus, endotactic), but not weakly reversible.
In light of Remark~\ref{rmk:endo_polytope}, this can be seen from the
reaction diagram and reactant polytope~$Q$, which is the convex hull
of the reactants $0, 2A, 2B$ (marked by $\bullet$ in
Figure~\ref{f:endo}).
\begin{figure}[ht]
\begin{align*}
  \begin{xy}<14mm,0cm>:
  (0,0) 	="y1" *{\bullet} *+!R{0} ; 
  (3,1) 	="y1'" *+!L{3A+B}  *{}; 
  (2,0) 	="y2" *{\bullet} *+!L{2A}  *{}; 
  (0,1)	="y2'" *+!R{B}  *{}; 
  (0,2) 	="y3" *+!R{2B}  *{\bullet}; 
  (1,1) 	="y3'" *{} *+!DL{A+B}  *{}; 
  (.4, .5) = "Q" *{Q}; 
  {\ar "y1";"y1'"*{}  }; 		
  {\ar "y2";"y2'"*{}  }; 		
  {\ar "y3";"y3'"*{}  }; 		
  "y1";"y2" **\dir{.};	
  "y2";"y3" **\dir{.};	
  "y3";"y1" **\dir{.};	
  \end{xy}
\end{align*}
\caption{\label{f:endo}Reaction network from Example~\ref{ex:str_end}.}
\end{figure}
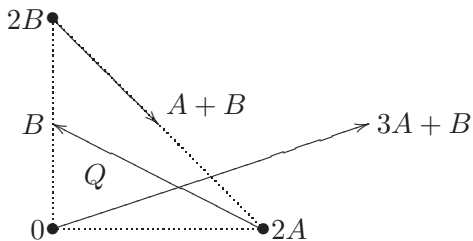
Indeed, no reaction points out of the triangle $Q$, and each proper
face of $Q$---an edge or a vertex---contains at least one reactant in
a reaction that points out of that face.
\end{example}


\begin{example}\label{ex:end'}
The network from Examples~\ref{ex:end_intro} and~\ref{ex:end}, whose
reactions are $2A \lra A+B$ and $B \to 0 \to 2B$, is endotactic but
not strongly endotactic.
\end{example}


The next lemma and the following two corollaries provide examples of
strongly endotactic reaction networks.  For notation, a set of
complexes of a network is a \emph{union of linkage classes} if it is
the set of complexes in a union of linkage classes of the reaction
graph.

\begin{lemma}\label{l:w-rev}
Let $\GG$ be a weakly reversible reaction network.  For a vector $w
\in \bR^\SS$, let $T_w = \init{w}{\source{\RR}}$ denote the set
of $\leq_{w}$-maximal reactants.
Assume that $w \in \bR^\SS$ is orthogonal to the stoichiometric
subspace of $\GG$ whenever $T_w$ is a union of linkage classes.  Then
$\GG$ is strongly endotactic.
\end{lemma}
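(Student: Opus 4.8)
The plan is to verify the two defining conditions of strongly endotactic directly: first that $\GG$ is endotactic, and then that for every $w$ not orthogonal to the stoichiometric subspace $H$ there is a reaction satisfying (i) and (ii) of Definition~\ref{d:endotactic}.\ref{d:strong_endotactic}. Endotacticity of weakly reversible networks is classical (it appears in~\cite{CNP}), but since our sign convention is flipped (Remark~\ref{rmk:switchSignEndo}), I would include the short argument: fix $w$ and a $w$-essential reaction $r = (y \to y')$ with $y \in \supp{w}{\SS,\CC,\RR}$, i.e. $y$ is $\leq_w$-maximal among reactants of $w$-essential reactions. Because the network is weakly reversible, $y$ lies on a directed cycle of reactions all contained in one linkage class; summing the reaction vectors around that cycle gives zero, so the $w$-values $\<w, \cdot\>$ of the complexes on the cycle cannot be strictly increasing forever. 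Combined with the maximality of $y$, one forces $\<w, \flux{r}\> \le 0$, and $w$-essentiality upgrades this to $\<w, \flux{r}\> < 0$. Hence $\GG$ is $w$-endotactic for all $w$, i.e. endotactic.

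Now fix $w \in \bR^\SS$ not orthogonal to $H$. I must produce a reaction $y \to y'$ with $y \in T_w = \init{w}{\source{\RR}}$ and $\<w, \flux{r}\> < 0$. The contrapositive of the lemma's hypothesis tells us exactly that $T_w$ is \emph{not} a union of linkage classes (otherwise $w \perp H$). Consider the complexes of $\GG$ that lie in $T_w$: these are reactants, and since $T_w$ is not a union of linkage classes, there is at least one linkage class $L$ that contains a complex in $T_w$ but is not entirely contained in $T_w$ — equivalently, within some linkage class meeting $T_w$ there is a reactant in $T_w$ and a complex strictly below it in the $\leq_w$ preorder. Using weak reversibility of $L$ (it is strongly connected), I can walk along directed edges inside $L$ starting from a complex of maximal $w$-value (which lies in $T_w \cap L$, since all reactants in $\GG$ have $w$-value $\le$ that of elements of $T_w$, and the $\leq_w$-maximal complexes of $L$ are reactants as every complex takes part in a reaction) and reach, in finitely many steps, a complex strictly below it; the first step of this walk along which the $w$-value strictly decreases is a reaction $y \to y'$ with $y$ having maximal $w$-value in $L$ — hence $y \in \init{w}{\source{\RR}} = T_w$ — and $\<w, y'\> < \<w, y\>$, i.e. $y >_w y'$. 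This reaction witnesses conditions (i) and (ii), so $\GG$ is strongly endotactic.

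The main obstacle, and the step deserving the most care, is the second paragraph's claim that within a linkage class $L$ meeting $T_w$ but not contained in it, a $\leq_w$-maximal complex of $L$ actually lies in $T_w$ and that from it one can reach a strictly lower complex by a directed path — one must rule out the possibility that all complexes of $L$ attaining the maximal $w$-value within $L$ are already strictly below the global maximum over all reactants, which would block $y \in T_w$. The resolution is that $T_w \cap L \ne \emptyset$ by choice of $L$, so $L$ contains a reactant whose $w$-value equals the global reactant-maximum $\mu := \max_{y \in \source{\RR}} \<w,y\>$; that complex is then $\leq_w$-maximal within $L$ as well, and since $L$ is not contained in $T_w$ it contains some complex with $w$-value $< \mu$. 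Strong connectedness of $L$ then supplies a directed path from the former to the latter, and the first strictly-decreasing edge on that path is the desired reaction. A minor point to address cleanly is that the complexes of $\GG$ include no isolated vertices within a weakly reversible linkage class — but strong connectedness of each linkage class already guarantees every complex of $L$ is both a reactant and a product of reactions in $L$, so no separate hypothesis is needed.
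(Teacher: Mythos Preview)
Your proof is correct and follows essentially the same approach as the paper's: both use that ``$T_w$ not a union of linkage classes'' forces some linkage class to straddle $T_w$ and its complement, and then invoke strong connectedness (weak reversibility) to extract a reaction from $T_w$ into the complement, which necessarily has $\<w,\flux{r}\><0$ since all complexes are reactants and hence $T_w$ comprises exactly the $\leq_w$-maximal complexes. The only differences are cosmetic---the paper cites \cite[Lemma~4.5]{CNP} for endotacticity rather than reproving it, and phrases the crossing argument as ``there is a reaction between $T_w$ and $\CC\setminus T_w$ in one direction or the other'' instead of your explicit path-walk to the first strictly-decreasing edge.
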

\begin{proof}
Assume that $w \in \bR^\SS$ is not orthogonal to the stoichiometric
subspace of $\GG$.  Every weakly reversible network is
endotactic~\cite[Lemma 4.5]{CNP}, so it remains only to show that
there is a reaction going from $T_w$ to the complement $\CC \setminus
T_w$.  All complexes of a weakly reversible network are reactants, so
if $y \in T_w$, then $y \geq_w y'$ holds for all complexes $y'$ in
$\CC$.  Thus, by the hypothesis that $T_w$ is not a union of linkage
classes, there exists a reaction that goes from $T_w$ to the
complement $\CC \setminus T_w$ or from $\CC \setminus T_w$ to $T_w$.
In the former case, we are done; in the latter case, weak
reversibility of the network implies that there is some other reaction
that goes from $T_w$ to the complement.  Therefore $\GG$ is strongly
endotactic.
\end{proof}

We obtain the following corollary for weakly reversible networks.
\begin{corollary}\label{cor:same_subspace}
If each linkage class of a weakly reversible network $\GG$ has the
same stoichiometric subspace, namely that of $\GG$ itself, then $\GG$
is strongly endotactic.
\end{corollary}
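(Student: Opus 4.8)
The plan is to deduce this directly from Lemma~\ref{l:w-rev} by verifying its hypothesis. So let $\GG$ be a weakly reversible network in which every linkage class has stoichiometric subspace equal to $H$, the stoichiometric subspace of $\GG$ itself. By Lemma~\ref{l:w-rev}, it suffices to show that whenever $w \in \bR^\SS$ has the property that $T_w = \init{w}{\source{\RR}}$ is a union of linkage classes, then $w \perp H$.

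First I would observe that in a weakly reversible network every complex is a reactant, so $T_w$ is really the set of $\leq_w$-maximal complexes over all of $\CC$; in particular $\<w, y\>$ takes a common value, say $c$, for all $y \in T_w$, and $\<w, y'\> \le c$ for all $y' \in \CC$. Now suppose $T_w$ is a union of linkage classes, and pick any one such linkage class $L \subseteq T_w$. Every reaction vector $\flux r$ of $\GG$ is a combination of reaction vectors drawn from individual linkage classes, so $H = \sum_L H_L$ where $H_L$ is the stoichiometric subspace of the linkage class $L$; by hypothesis each $H_L = H$. Thus it is enough to show $w \perp H_L$ for a single linkage class $L$ contained in $T_w$. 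But every reaction $y \to y'$ within such an $L$ has both endpoints in $L \subseteq T_w$, hence $\<w, y\> = \<w, y'\> = c$, so $\<w, \flux r\> = 0$ for every reaction $r$ internal to $L$. Since $H_L$ is spanned by exactly these reaction vectors, $w \perp H_L = H$, as desired.

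The only subtlety — and it is minor — is making sure $T_w$ really does contain at least one full linkage class when it is a union of linkage classes, i.e.\ that it is nonempty: this holds because $\CC$ is finite and nonempty (assuming the network has at least one reaction, so that $H \ne 0$ and the statement is nonvacuous), so $\leq_w$-maximal complexes exist. There is no genuine obstacle here; the content of the corollary is entirely carried by Lemma~\ref{l:w-rev}, and this argument is just the bookkeeping that checks $T_w$ being a union of linkage classes forces $w$ to annihilate each linkage class's stoichiometric subspace, which under the equal-subspaces hypothesis is all of $H$. Then Lemma~\ref{l:w-rev} applies verbatim and gives that $\GG$ is strongly endotactic.
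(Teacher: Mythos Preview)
Your proof is correct and follows essentially the same approach as the paper's: both reduce to Lemma~\ref{l:w-rev} and verify its hypothesis by observing that a linkage class contained in $T_w$ has all its complexes at the same $\<w,\cdot\>$-level, forcing $w$ to annihilate that linkage class's stoichiometric subspace, which by assumption equals $H$. The paper phrases this as the contrapositive (start with $w \notin H^\perp$, find a complex $z^*$ in the linkage class of some $y \in T_w$ with $y >_w z^*$), but the content is identical.
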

\begin{proof}
Suppose each linkage class has the same stoichiometric subspace $H$.
For $w \in \bR^\SS$ not orthogonal to $H$, let $y$ be a
$\leq_w$-maximal complex, i.e., $y \in T_w = \init{w}{\source{\RR}}$.
By Lemma~\ref{l:w-rev}, it suffices to show that $T_w$ is not a
union of linkage classes.  Letting $G_j$ denote the linkage class of
$y$, it follows that $H$ is spanned by the vectors $z-y$, where $z$ is
a complex in $G_j$.  Thus there exists $z^*$ in $G_j$ such that $y >_w
z^*$, because otherwise $w$ would be orthogonal to $H$.  So, $z^*
\notin T_w$, which implies that $T_w$ is not a union of linkage
classes.
\end{proof}

Corollary~\ref{cor:same_subspace} implies that the networks Anderson
treated \cite{AndersonBd11, Anderson11} are strongly endotactic.

\begin{corollary}\label{cor:one_l_class}
Every weakly reversible reaction network with exactly one linkage
class is strongly endotactic.
\end{corollary}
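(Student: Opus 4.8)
The plan is to derive Corollary~\ref{cor:one_l_class} as an immediate special case of Corollary~\ref{cor:same_subspace}. The key observation is that if a weakly reversible network has exactly one linkage class, then that single linkage class \emph{is} the whole network, so trivially each linkage class has the same stoichiometric subspace as the network itself. Thus the hypothesis of Corollary~\ref{cor:same_subspace} is vacuously satisfied, and we conclude that the network is strongly endotactic.

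Concretely, I would write: let $\GG$ be a weakly reversible reaction network with exactly one linkage class $G_1$. By Definition~\ref{d:stoichiometry}, the stoichiometric subspace $H$ of $\GG$ is the span of all reaction vectors $\flux r$ for $r \in \RR$. Since every reaction of $\GG$ lies in the unique linkage class $G_1$, the span of the reaction vectors of $G_1$ equals $H$. Hence the single linkage class has stoichiometric subspace equal to that of $\GG$, and Corollary~\ref{cor:same_subspace} applies directly to give that $\GG$ is strongly endotactic.

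There is essentially no obstacle here: the content is entirely in Lemma~\ref{l:w-rev} and Corollary~\ref{cor:same_subspace}, and this corollary is just the cleanest instantiation. The only point worth a sentence of care is the degenerate case in which $H = \{0\}$ (for instance a network with only trivial reactions, allowed by Definition~\ref{d:crn}): then there is no vector $w$ that is not orthogonal to the stoichiometric subspace, so condition~\ref{d:strong_endotactic}(i)--(ii) of Definition~\ref{d:endotactic} is vacuous and endotacticity is likewise vacuous, so the network is (vacuously) strongly endotactic. I expect the proof in the paper to be a single line citing Corollary~\ref{cor:same_subspace}.

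\begin{proof}
A network with exactly one linkage class has, by definition, that single linkage class equal to the whole network, so its unique linkage class shares the stoichiometric subspace of the network. The claim therefore follows from Corollary~\ref{cor:same_subspace}.
\end{proof}
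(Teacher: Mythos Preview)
Your proposal is correct and matches the paper's approach exactly: the paper states Corollary~\ref{cor:one_l_class} immediately after Corollary~\ref{cor:same_subspace} without even a separate proof, the preceding sentence making clear it is the obvious special case. Your one-line derivation is precisely what is intended.
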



\subsection{Mass-action differential inclusions}\label{s:m-a}

We now recall from~\cite{ProjArg} how a confined reaction system gives
rise to a mass-action differential inclusion.
In what follows, we assume that all manifolds have finite~dimension.


\begin{definition}\label{d:diff_incl}
Let $M$ be a smooth manifold with tangent bundle $\pi_M : TM \to M$.
A~\emph{differential inclusion} on $M$ is a subset $X \subseteq TM$.
\end{definition}


\begin{definition}
Let $X$ be a differential inclusion on a smooth manifold $M$.
\begin{enumerate}
\item%
Let $I\subseteq \Rnn$ be a nonempty interval (in particular,
connected) containing its left endpoint.  A differentiable curve $f: I
\to M$ is a \emph{trajectory} of $X$ if the tangent vectors to the
curve lie in~$X$.\vspace{-1ex}
\item%
An unbounded interval is a \emph{ray}.  A trajectory~$f$ defined on a
ray \emph{eventually} has property $P$ if there exists $T > 0$ such
that property $P$ holds for~$f$ whenever $t \geq T$.  The
\emph{$\omega$-limit set} of a trajectory~$f$ defined on a ray is the
set
\begin{equation*}
  \omega(f) = \big\{x \in \ol M \mid f(t_n) \to x \text{ for
  some sequence } t_n \in I \text{ with } t_n \to \infty \big\}
\end{equation*}
of accumulation points of~$f$ arising from a sequence of times tending
to infinity.
\end{enumerate}
\end{definition}

The next definition makes use of the notation $x^y = x_1^{y_1} \cdots
x_m^{y_m}$, for $x,y \in \bR^m$.


\begin{definition}\label{d:MADiffI}
The \emph{mass-action differential inclusion} of a confined reaction
system, given by a reaction network $\GG$ with tempering~$\kappa$ and
invariant polyhedron~$\invtPoly$, is the differential inclusion on
$\bR^\SS_{>0}$ in which the fiber over a point $x \in \relIntP$ is
$$
  \Big\{\sum_{r\in \RR} k_r x^{\source{r}}  \flux{r} \ \big|\ k_r\in
  \kappa(r) ~ \text{for all } r \in \RR \Big\} \subseteq \bR^\SS =
  T_x \Rplus^\SS,
$$
and the fiber over all other points $x \in \Rplus^\SS \setminus
\relIntP$ is empty.
\end{definition}

\begin{example}\label{ex:m-a}
One possible tempering on the network from Example~\ref{ex:str_end} is
given by $\kappa(0 \to 3A+B) = [1,2]$, and $\kappa(2A \to B) = \{3\}$,
and $\kappa(2B \to A+B) = [4,5]$.  Every trajectory $x(t) =
\big(x_A(t),x_B(t)\big)$ of the resulting mass-action differential
inclusion
satisfies
\begin{align*}
  \dot{x}_A &= 3 \cdot k_1(t) - 6 \cdot x_A(t)^2 + k_3(t) \cdot x_B(t)^2
\\\dot{x}_B &= k_1(t) + 3 \cdot x_A(t)^2 - k_3(t) \cdot x_B(t)^2 ~,
\end{align*}
where $k_1(t) \in [1,2]$ and $k_3(t) \in [4,5]$ for all time $t$.	
\end{example}


\section{Conjectures related to persistence and permanence}\label{s:conjs}

In this section, we recall several conjectures related to the
persistence of reaction networks.  First, some definitions
from~\cite[\S2]{ProjArg} are required; see \cite[\S2]{ProjArg} for
remarks on relations between these various concepts and comparisons
with similar notions in the literature.

\begin{definition}\label{d:persistent_permanent}
Let $\ol M$ be a smooth manifold with corners whose interior is $M =
\ol M \setminus \partial \ol M$, and let $V \subseteq \partial \ol M$
be a subset of the boundary.  A differential inclusion $X \subseteq
TM$ is
\begin{enumerate}
\item%
\emph{persistent} if the closure in~$\ol M$ of every
trajectory of~$X$ is disjoint from~$\partial \ol M$.
\item%
\emph{repelled by $V$} if for every open set $O_1 \subseteq \ol M$
with $\ol V \subseteq O_1$, there exists a smaller open set
$O_2\subseteq O_1$ with $\ol V \subseteq O_2$ such that for every
trajectory $f: I \to M$ of~$X$, if $f(\inf{I}) \notin O_1$ then $f(I)
\cap O_2$ is empty; in other words, if the trajectory begins outside
of~$O_1$, then the trajectory never enters $O_2$.
\end{enumerate}
If $\ol M$ is compact, then a differential inclusion $X \subseteq TM$
is \emph{permanent} if it is persistent and there is a compact subset
$\Omega \subseteq M$ such that for every ray $I$, every trajectory of
$X$ defined on $I$ is eventually contained in $\Omega$.
\end{definition}



\begin{definition}\label{d:strongly_persistent}
A confined reaction system $N$, specified by a reaction network $\GG$
together with a tempering $\kappa$ and an invariant
polyhedron~$\invtPoly$, is \emph{persistent} (respectively,
\emph{permanent}) if the mass-action differential
inclusion
on $M = \Rplus^\SS$ arising from the reaction system $N$ is persistent
(respectively, permanent) when viewed with respect to the
compactification $\ol M = [0,\infty]^\SS$.  More generally, a network
$\net$ is itself \emph{persistent} or \emph{permanent}
if for all choices of temperings $\kappa$ and invariant polyhedra
$\invtPoly$, the resulting mass-action differential inclusion
has the corresponding property.
\end{definition}

We now state three conjectures, in increasing level of strength, and
then state a related fourth conjecture.  The first conjecture concerns
so-called ``complex-balanced'' systems, which form a well-studied
subclass of weakly reversible mass-action ODE systems.  Moreover, this
class contains all so-called ``detailed-balanced'' systems and weakly
reversible ``deficiency zero'' systems.  Many properties of
complex-balanced systems (as well as detailed-balanced systems and
deficiency zero systems) were elucidated by Feinberg, Horn, and
Jackson in the 1970s, and we provide only an overview here.  (A
definition of complex-balanced systems can be found in any of the
following references: \cite{TDS,Feinberg72,Horn72,HornJackson}.)  For
such systems, it is known that a unique steady state resides within
the interior of each invariant polyhedron~$\invtPoly$.  This steady
state is called the Birch point in~\cite{TDS} due to the connection to
Birch's Theorem (see Section~\ref{s:extBirch}).
Moreover, a strict Lyapunov function exists for this Birch point, so
local asymptotic stability relative to $\invtPoly$ is
guaranteed~\cite{HornJackson}.  An open question is whether all
trajectories with initial condition interior to $\invtPoly$ converge
to the unique Birch point of $\invtPoly$.  The assertion that the
answer is ``yes'' is the content of the following conjecture, which
was stated first by Horn in 1974~\cite{Horn74} and was given the name
``Global Attractor Conjecture'' by Craciun et al.~\cite{TDS}.


\begin{conjecture}[Global Attractor Conjecture]\label{conj:GAC}
For every invariant polyhedron $\invtPoly$ of every complex-balanced
system,
the Birch point $\ol x \in \invtPoly$
is a global attractor of $\relIntP$.
\end{conjecture}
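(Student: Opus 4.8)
The plan is to prove the conjecture in the equivalent form: \emph{every interior trajectory of a complex-balanced system is persistent}. Indeed, for a complex-balanced system the pseudo-Helmholtz free energy
$$
  g(x) = \sum_{i\in\SS}\big(x_i\log(x_i/\ol{x}_i) - x_i + \ol{x}_i\big)
$$
is a strict Lyapunov function on $\relIntP$ whose unique minimizer is the Birch point $\ol x$~\cite{HornJackson}. This $g$ extends continuously to $\ol{\invtPoly}$ (since $t\log t \to 0$ as $t\to 0^+$) and tends to $+\infty$ along the unbounded directions of $\invtPoly$, so its sublevel sets meet $\ol{\invtPoly}$ in compact sets. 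Hence every interior trajectory $f$ is bounded and has nonempty compact $\omega$-limit set $\omega(f)\subseteq\ol{\invtPoly}$; by the LaSalle invariance principle, $\omega(f)$ lies in the set where $\tfrac{d}{dt}g = 0$. In $\relIntP$ that set is exactly $\{\ol x\}$, so if $\omega(f)\cap\partial\invtPoly=\varnothing$ then, by connectedness of $\omega$-limit sets of bounded trajectories, $\omega(f)=\{\ol x\}$ and $f\to\ol x$. Thus the whole conjecture reduces to excluding boundary accumulation, i.e.\ to persistence.

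To establish persistence for an arbitrary complex-balanced (hence weakly reversible) network, I would run the jet analysis of Sections~\ref{s:jet}--\ref{s:prelya} at the boundary rather than at infinity. A trajectory accumulating on a face of $\partial\invtPoly$ must, for large times, track toric rays in directions $w$ with $\<w,e_i\> < 0$ for exactly those species $i$ that tend to zero. Along such a jet, $\tfrac{d}{dt}g$ is controlled to leading order by the pull $\pull$ of the asymptotically dominant reactions, and the goal is to certify that this leading contribution is strictly negative, forcing $g$ to decrease and repelling the trajectory from the face. I would couple this with the projection argument Lemma~\ref{l:projArg}, inducting on dimension: the dynamics transverse to a given face is governed by a lower-dimensional mass-action differential inclusion whose network is again weakly reversible on that face, so persistence of the boundary strata can be assumed and lifted. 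This is the exact scaffolding that, under the strongly endotactic hypothesis, produces permanence (Theorem~\ref{thm:main2}) via Proposition~\ref{p:domination}.

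The main obstacle is that Proposition~\ref{p:domination}---the domination of every draining pull by a sustaining pull along each jet frame---can fail for weakly reversible networks that are endotactic but not strongly endotactic. For such a network there are directions $w$, orthogonal to no part of the stoichiometric subspace, admitting no reaction $r=(y\to y')$ with $\source r\in\init{w}{\source\RR}$ and $\<w,\flux r\> < 0$ (Definition~\ref{d:endotactic}.\ref{d:strong_endotactic}(i)--(ii) is violated). Along a jet built from such a $w$, the dominant sustaining reactions are seated at strictly $\leq_w$-submaximal reactants, so their pull is a strictly lower-order monomial than that of a draining reaction at a maximal reactant; the leading-order term of $\tfrac{d}{dt}g$ is then nonnegative and the combinatorial certificate yields no uniform cutoff. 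Surmounting this would require either extracting a decrease from strictly subleading orders of the jet expansion, or replacing the purely combinatorial argument by the algebraic rigidity of complex-balancing---exploiting the toric (binomial) structure of the complex-balanced steady-state variety and the fact that every boundary steady state of a weakly reversible system lies on a face supporting a complex-balanced sub-system, so that candidate boundary $\omega$-points are constrained to ``critical siphons'' that one then rules out directly. I expect this degenerate-direction analysis, not the Lyapunov reduction, to be the crux.
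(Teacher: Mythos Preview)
This statement is a \emph{conjecture}, not a theorem: the paper does not prove it in general, and it remains open. The paper's contribution (Theorem~\ref{thm:main2}) resolves only the strongly endotactic case, precisely because Proposition~\ref{p:domination} is available there and not otherwise. Your reduction to persistence in the first paragraph is correct and standard, and your diagnosis of the obstacle is accurate---but a diagnosis is not a proof.

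The gap you name is the gap. For a weakly reversible network that is endotactic but not strongly endotactic, there exist directions $w\notin H^\perp$ for which no reaction with $\leq_w$-maximal reactant points strictly inward; along jets built from such $w$, the domination certificate of Proposition~\ref{p:domination} is unavailable, and the leading-order sign of $\tfrac{d}{dt}g$ is not forced. Your two suggested escapes---extracting decrease from subleading jet orders, or exploiting the toric/binomial rigidity of complex balancing to rule out critical-siphon faces---are exactly the kinds of ideas that have driven partial progress (dimension $\leq 3$, single linkage class, weakly dynamically non-emptiable siphons), but neither is known to close the argument in general. In particular, the projection/induction via Lemma~\ref{l:projArg} does not by itself handle the degenerate directions: the projected network on a face need not be strongly endotactic either, so the same obstruction recurs at lower dimension. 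Until you supply a replacement for Proposition~\ref{p:domination} that works for all weakly reversible (or all endotactic) networks, what you have written is a research outline, not a proof.
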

Due to the strict Lyapunov function, the \GAC is equivalent to the
following:\ \emph{every complex-balanced system is persistent}.  This
suggests the following more general conjecture, which was first stated
by Feinberg in 1987~\cite[Remark 6.1E]{Fein87}.


\begin{conjecture}[Persistence conjecture]\label{conj:persistence}
Every weakly reversible mass-action kinetics ODE system is persistent.
\end{conjecture}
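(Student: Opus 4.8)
The plan is to derive Conjecture~\ref{conj:persistence} from the stronger statement that every weakly reversible network is \emph{permanent} (hence persistent, by Definition~\ref{d:persistent_permanent}): a mass-action ODE system is the special case of a mass-action differential inclusion all of whose tempering intervals are singletons, and every weakly reversible network is endotactic~\cite[Lemma~4.5]{CNP}, so this is precisely Conjecture~\ref{conj:ext_permanence} restricted to the weakly reversible case. First I would reduce permanence to two local properties, exactly as in Sections~\ref{sub:pers}--\ref{sub:perm}: via the projection argument of~\cite{ProjArg} (Lemma~\ref{l:projArg}) and the closure of the class of mass-action differential inclusions under the relevant projections, it suffices to show, for every confined reaction system built on a weakly reversible network, that (a)~every boundary stratum of $[0,\infty]^\SS$ is repelling and (b)~all trajectories are bounded. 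Both follow from one Lyapunov-type estimate: the pseudo-Helmholtz free energy $g=\sum_{i\in\SS}(x_i\log x_i - x_i)$ strictly decreases along trajectories outside a compact subset of each invariant polyhedron, as in Claim~\ref{claim:ex} and Theorem~\ref{thm:LyFncWorks_cpc}.

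The estimate is combinatorial at heart. In energy coordinates $u=\log x$, the time derivative of $g$ along the toric ray in direction $w$ is the sum over reactions of the contributions $\langle w,\flux{r}\rangle\,\theta^{\langle w,\source{r}\rangle}\,k_r\log\theta$ of equation~\eqref{eq:contr}, and I would seek a cutoff $\theta_0$, \emph{uniform} over the compact sphere of directions $w$, past which the negative (sustaining) contributions overwhelm the positive (draining) ones. Because the naive per-direction cutoff blows up as $w$ approaches the finitely many degenerate directions, I would replace $w$ by a jet frame and apply Proposition~\ref{p:domination}: along every jet frame of a strongly endotactic network, the pull of each draining reaction is dominated by the pull of some sustaining reaction. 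This yields the uniform cutoff---hence permanence---for every strongly endotactic network, and in particular (Corollaries~\ref{cor:one_l_class} and~\ref{cor:same_subspace}) for weakly reversible networks with one linkage class, or more generally those all of whose linkage classes span the full stoichiometric subspace.

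The obstruction, which I expect to be the genuinely hard step, is exactly the gap recorded in Lemma~\ref{l:w-rev}. A weakly reversible network fails to be strongly endotactic precisely when there is a vector $w$ not orthogonal to the stoichiometric subspace for which the set $T_w=\init{w}{\source{\RR}}$ of $\leq_w$-maximal reactants is a union of linkage classes. For such $w$, no reaction leaves $T_w$, so every reaction issuing from a $\leq_w$-maximal reactant has reaction vector orthogonal to $w$ and hence \emph{zero pull} in direction $w$; yet under an arbitrarily small jet perturbation $w\mapsto w-\varepsilon w'$ some of these reactions can acquire positive (draining) pull while still carrying the \emph{largest} monomial $\theta^{\langle w,\source{r}\rangle}$, and there need be no sustaining reaction sharing that top monomial to offset them---so Proposition~\ref{p:domination} has nothing to offer. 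Closing this gap appears to demand new input: either a recursive use of the projection argument of~\cite{ProjArg} that quotients by the stoichiometry internal to the offending union of linkage classes and treats the residual dynamics in strictly lower dimension, or an argument exploiting the conservation laws to rule out trajectories escaping to $\partial[0,\infty]^\SS$ along precisely these degenerate directions. Without such an idea, the approach here secures Conjecture~\ref{conj:persistence} only for the strongly endotactic subclass, which is Theorems~\ref{thm:main} and~\ref{thm:main2}.
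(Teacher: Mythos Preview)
This statement is Conjecture~\ref{conj:persistence}, which the paper explicitly leaves open; there is no proof in the paper to compare against. Your write-up is not a proof but an honest account of how far the paper's method reaches and where it stalls, and on that score it is essentially correct and matches the paper's own assessment (see Section~\ref{s:limit}, especially Examples~\ref{ex:obs1} and~\ref{ex:net_open}, which exhibit exactly the failure mode you describe).

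Two small corrections. First, Lemma~\ref{l:projArg} does not require that every boundary stratum of $[0,\infty]^\SS$ be repelling; it requires only that the \emph{origin} be repelling for every system in the class~$\mathcal{F}$ (and that trajectories be bounded). The projection machinery of~\cite{ProjArg} then handles the other boundary strata by passing to lower-dimensional projected systems in which those strata become origins---see Remark~\ref{rmk:if_reflections_functorial}. Second, for this to work the class~$\mathcal{F}$ must itself be closed under the relevant projections; this is established in~\cite{ProjArg} for confined strongly endotactic systems, and it is an ingredient, not a formality.

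Your diagnosis of the obstruction is on target: by Lemma~\ref{l:w-rev}, a weakly reversible network fails to be strongly endotactic precisely when some $T_w$ (with $w\notin H^\perp$) is a union of linkage classes, and in such directions no reaction leaves $T_w$, so Proposition~\ref{p:domination} supplies no sustaining reaction carrying the top monomial. The paper does not close this gap, and neither does your proposal; the suggestions you float (a recursive projection that quotients by the internal stoichiometry of the offending linkage classes, or a conservation-law argument ruling out escape along the degenerate directions) are reasonable directions but remain speculative.
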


Conjecture~\ref{conj:persistence} was generalized recently by Craciun,
Nazarov, and Pantea in the following three ways: the weakly reversible
hypothesis is weakened to endotactic, fixed reaction rate constants
are allowed to vary within bounded intervals (i.e., they are
tempered), and the conclusion of persistence is strengthened to
permanence \cite[\S 4]{CNP}.



\begin{conjecture}[Extended permanence conjecture]\label{conj:ext_permanence}
Every endotactic reaction network is permanent.
\end{conjecture}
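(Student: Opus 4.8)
The plan is to prove permanence by exhibiting the pseudo-Helmholtz free energy $g(x) = \sum_{i \in \SS} (x_i \log x_i - x_i)$ as a function that strictly decreases along trajectories outside a compact subset of each invariant polyhedron, and then to feed this into the projection machinery of \cite{ProjArg} (Lemma~\ref{l:projArg}) exactly as one does for the strongly endotactic case. Concretely, the target is a statement of the shape of Theorem~\ref{thm:LyFncWorks_cpc}, but with the strongly endotactic hypothesis relaxed to merely endotactic (Definition~\ref{d:endotactic}). Once such a statement is in hand, it yields that the origin and the entire boundary $\partial[0,\infty]^\SS$ are repelling and that all trajectories are bounded, and these properties together with closure under projection give permanence through Lemma~\ref{l:projArg}.

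First I would set up the asymptotic analysis of $\tfrac{d}{dt} g$ along toric rays. Since $\nabla g(x) = \log x$, along the toric ray in direction $w$ the chain rule decomposes $\tfrac{d}{dt} g$ into a sum of per-reaction \emph{pulls} $\langle w, \flux{r} \rangle\, \theta^{\langle w, \source{r} \rangle}$ (Definition~\ref{d:pull}), up to the common positive factor $\log\theta$ and the bounded rate constants. The reactions that dominate as $\theta \to \infty$ are precisely the $w$-essential reactions whose reactants are $\leq_w$-maximal among reactants of $w$-essential reactions, i.e., those with $\source{r} \in \supp{w}{\SS,\CC,\RR}$. The endotactic hypothesis says exactly that every such dominant reaction satisfies $\langle w, \flux{r}\rangle < 0$, so to leading order in $\theta$ the derivative is negative in every direction $w$ not orthogonal to the stoichiometric subspace.

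The difficulty, already visible in the reverse Lotka--Volterra discussion of Section~\ref{s:ped_intro}, is uniformity over the compact sphere of directions. In degenerate directions a \emph{draining} reaction (positive pull) can be subdominant yet come to dominate along nearby perturbed directions, with the crossover cutoff $\theta$ blowing up as the perturbation shrinks. The device for handling this is to refine toric rays into jets and jet frames (Definition~\ref{d:jet}), which encode a main direction perturbed to successively higher order; the sign of $\tfrac{d}{dt} g$ then reduces to a finite combinatorial comparison of pulls along each jet frame. The crux of the whole argument is the domination statement: along every jet frame the pull of each draining reaction must be dominated by the pull of some \emph{sustaining} reaction (Proposition~\ref{p:domination}). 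I would also invoke the perturbed and at-infinity extensions of Birch's theorem (Theorems~\ref{thm:extBirch} and~\ref{t:birchExt1}) to control the finitely many equilibria and ensure the decrease of $g$ is strict away from the Birch point.

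The main obstacle---and the reason the conjecture as stated does not simply follow from Theorem~\ref{thm:main}---is precisely the domination property for networks that are endotactic but not strongly endotactic. Strong endotacticity (Definition~\ref{d:endotactic}.\ref{d:strong_endotactic}) guarantees, for every relevant $w$, a reaction $y \to y'$ with $y >_w y'$ whose reactant $y$ is $\leq_w$-maximal among \emph{all} reactants, not merely among reactants of $w$-essential reactions; this top-level sustaining reaction is what dominates the draining reactions appearing in perturbed jet directions uniformly. For a merely endotactic network the overall $\leq_w$-maximal reactants may belong only to reactions orthogonal to $w$, hence contributing zero pull, so there need be no sustaining reaction high enough in the $\leq_w$-order to dominate the draining contributions, and the uniform cutoff can fail. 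Proving the conjecture in full therefore requires a genuinely new idea replacing Proposition~\ref{p:domination}: either a more refined jet comparison that extracts a net downhill contribution from the endotactic inequality alone, or additional combinatorial structure of endotactic diagrams that substitutes for the missing top-level sustaining reaction. This is where I expect the real work to lie.
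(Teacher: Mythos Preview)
The statement you were asked to prove is a \emph{conjecture} that the paper does not prove; the paper resolves it only under the additional hypothesis of strong endotacticity (Theorem~\ref{thm:main}).  There is therefore no ``paper's own proof'' to compare against.

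Your proposal correctly recognizes this.  You accurately outline the strategy that succeeds in the strongly endotactic case---the Lyapunov-like behavior of $g$ outside a compact set (Theorem~\ref{thm:LyFncWorks_cpc}), the jet machinery, Proposition~\ref{p:domination}, and the projection argument of~\cite{ProjArg}---and you pinpoint precisely the obstruction to extending it: for a merely endotactic network the $\leq_w$-maximal reactants may belong only to reactions orthogonal to~$w$, so there need be no sustaining reaction at the top tier to dominate draining reactions appearing in perturbed directions.  This diagnosis matches the paper's own assessment in Section~\ref{s:limit}; Example~\ref{ex:obs1} exhibits exactly the failure you describe, and the paper states explicitly that higher-dimensional analogues of that example cannot be handled by toric jets.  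Your concluding sentence, that a genuinely new idea is required, is the honest answer: the conjecture remains open in the paper.
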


\begin{remark}
Conjecture~\ref{conj:ext_permanence} captures the intuitively
appealing idea that if a reaction diagram ``points inwards'' then the
corresponding dynamics in concentration space must also roughly
``point inwards.'' We interpret this conjecture as a suggestion that
the geometry of the reaction diagram ought to be viewed literally as a
combinatorial representation of the dynamics.  From this perspective,
Conjecture~\ref{conj:ext_permanence} is a first step in a research
program to complete the details of this correspondence.  In our
previous work~\cite[Question~5.26]{ProjArg}, we suggested a framework
within which additional aspects of this correspondence might be
explored.  In particular, we utilized the standard dynamical system
notion of topological equivalence to ask if the qualitative nature of
the dynamics remains invariant under reasonable transformations to the
reaction diagram.  This idea was most pithily expressed by asking for
the richest domain category (of reaction diagrams) from which the
mass-action differential inclusion remains a functor \cite{ProjArg}.
\end{remark}

The following fourth conjecture was stated recently by
Anderson~\cite[\S 1.1]{AndersonBd11}.  It would follow from
Conjecture~\ref{conj:persistence} according to our definition of
persistence for reaction networks
(which differs from some other definitions, cf.\
\cite[Remark~2.10]{ProjArg}).


\begin{conjecture}[Boundedness conjecture]\label{conj:boundedness}
Every weakly reversible mass-action kinetics ODE system has bounded
trajectories.
\end{conjecture}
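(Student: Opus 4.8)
The plan is to obtain boundedness as a special case of permanence. By Definition~\ref{d:persistent_permanent}, a permanent network has every trajectory eventually confined to a compact subset of $\Rplus^\SS$, and hence bounded; and since every weakly reversible network is endotactic \cite[Lemma~4.5]{CNP}, Theorem~\ref{thm:main} already settles boundedness for the large subclass of weakly reversible networks that are in fact \emph{strongly} endotactic. By Corollaries~\ref{cor:same_subspace} and~\ref{cor:one_l_class} this covers, in particular, every weakly reversible network whose linkage classes share the stoichiometric subspace of the whole network, and every weakly reversible network with a single linkage class. First I would isolate the residual case using Lemma~\ref{l:w-rev}: a weakly reversible network fails to be strongly endotactic exactly when there is a vector $w$, not orthogonal to the stoichiometric subspace, for which $T_w = \init{w}{\source{\RR}}$ is a union of linkage classes. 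Because reactions stay within linkage classes, for such a $w$ every reaction out of $T_w$ lands again in $T_w$ and is therefore $w$-neutral: $\<w,\flux r\> = 0$.

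For boundedness it suffices to prevent escape to the faces of $[0,\infty]^\SS$ at infinity, so I would run the Lyapunov argument of Theorem~\ref{thm:LyFncWorks_cpc} but only in \emph{outward} directions $w$ (those with a strictly positive coordinate, along which the toric ray $(\theta^{w})$ leaves every compact set as $\theta \to \infty$). Along such a ray the derivative of $g = \sum_{i\in\SS}(x_i\log x_i - x_i)$ is governed, to leading order in $\theta$, by the pulls $\pull = \<w,\flux r\>\,\theta^{\<w,\source r\>}$ of the reactions whose reactant lies in $T_w$. The key observation is that endotacticity alone forces every such reaction to satisfy $\<w,\flux r\> \le 0$: if $\<w,\flux r\> > 0$ then $r$ is $w$-essential, so $\source r \in \supp w{\SS,\CC,\RR}$ (a $w$-maximal reactant is automatically $w$-maximal among reactants of $w$-essential reactions), and Definition~\ref{d:endotactic} then yields the contradiction $\<w,\flux r\> < 0$. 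Thus the leading term of $\dot g$ is nonpositive for \emph{every} endotactic network, and it is strictly negative precisely when some reaction out of $T_w$ has $\<w,\flux r\> < 0$, which is the strong endotactic condition of Definition~\ref{d:endotactic}.\ref{d:strong_endotactic}.

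The hard part is exactly the degenerate outward directions left over after Lemma~\ref{l:w-rev}, where $T_w$ is a union of linkage classes and every reaction out of $T_w$ is $w$-neutral, so the leading term of $\dot g$ vanishes. Here I would pass to the jet and jet-frame refinement of Section~\ref{s:jet}: a jet frame built on the main direction $w$ perturbs into subordinate directions that revive the pulls of reactions which were $w$-neutral but strictly negative in the perturbing direction, and the goal would be an analogue of Proposition~\ref{p:domination} guaranteeing that in some outward jet frame the revived \good pulls dominate the revived \bad pulls uniformly in the perturbation, producing a single cutoff $\theta$ past which $g$ decreases. I expect this to be the genuine obstacle: when the top face $T_w$ forms a whole union of linkage classes, weak reversibility recirculates mass \emph{within} that face with no net reaction leaving it in the $w$-direction, so the first-order pull carries no inward signal and one must extract negativity from strictly lower-order toric terms. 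Controlling those lower-order contributions uniformly over all outward directions is precisely what the strong endotactic hypothesis was designed to make automatic, and it is unclear that the bare weakly reversible (endotactic) hypothesis supplies enough structure; this gap is why the Boundedness Conjecture remains open beyond the strongly endotactic case that Theorem~\ref{thm:main} resolves.
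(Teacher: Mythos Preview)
The statement you were asked to prove is a \emph{conjecture}, not a theorem: the paper does not prove Conjecture~\ref{conj:boundedness} in general, and explicitly says that all four conjectures (including this one) remain open. The paper's contribution is the strongly endotactic special case, stated and proved separately as Theorem~\ref{thm:bddness}. Your proposal correctly recognizes exactly this: you cover the strongly endotactic subclass, isolate the residual case via Lemma~\ref{l:w-rev}, and then honestly identify the obstruction---that when $T_w$ is a union of linkage classes the leading pull vanishes and no lower-order domination argument is available from weak reversibility alone. That diagnosis matches the paper's own assessment of why the conjecture is open.

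One technical caution on your route through permanence: in the paper's logical order, Theorem~\ref{thm:main} (permanence) is proved \emph{using} Theorem~\ref{thm:bddness} (boundedness) as an ingredient, via Lemmas~\ref{l:projArg'} and~\ref{l:meet_cpc_set}. So invoking Theorem~\ref{thm:main} to deduce boundedness for the strongly endotactic case is circular within the paper's development. The paper instead derives boundedness directly from Theorem~\ref{thm:LyFncWorks_cpc}: once $g$ decreases outside a compact set $\Kset$, every trajectory is trapped in the sublevel set $\{g \le M\}$ where $M = \max\{\sup_{\Kset} g,\, g(x(\inf I))\}$. This is a cleaner and logically prior argument than going through permanence, and it is the one you should cite for the strongly endotactic case.
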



Although all four conjectures remain open, some progress has been made
in recent years.  Conjecture~\ref{conj:boundedness} is true for
complex-balanced systems (due to the Lyapunov function:
see~\cite[Lemma~3.5]{SM} for details) and was resolved recently for
mass-action ODE systems with only one linkage class (under some
additional mild hypotheses) by Anderson \cite{AndersonBd11}.
Conjecture~\ref{conj:GAC} has been proved for systems of dimension at
most three \cite{Anderson08,AndersonShiu10,TDS,CNP,Pantea} and also
when the network contains only one linkage class~\cite{Anderson11}.
Conjecture~\ref{conj:ext_permanence}, and thus
Conjectures~\ref{conj:GAC} and~\ref{conj:persistence} as well, has
been resolved in dimensions at most~2 by Craciun, Nazarov, and Pantea
for the systems for which Conjecture~\ref{conj:boundedness}
holds~\cite{CNP,Pantea}.  These results are due in part to the
analysis of steady states on the boundary of invariant polyhedra.

Indeed, it is known that in the case of complex-balanced systems, the
$\omega$-limit set is contained in the set of steady states.
In this setting, it has been proved that certain boundary steady
states are not $\omega$-limit points.  These include vertices of an
invariant polyhedron, according to Anderson and Craciun et
al.~\cite{Anderson08,TDS}; interior points of facets, according to
Anderson and Shiu~\cite{AndersonShiu10}; and interior points of
``weakly dynamically non-emptiable'' faces, according to Johnston and
Siegel~\cite{JohnstonSiegel}.  In fact, trajectories are repelled by
such points.  Also, some networks have no boundary steady states; for
example, Shinar and Feinberg have proved that weakly reversible
``concordant'' networks have this property~\cite{concordant}.
Additionally, the three-dimensional case was resolved by
Pantea~\cite{Pantea}.  The remaining cases for
Conjecture~\ref{conj:GAC} are systems of dimension 4 and higher in
which steady states lie on faces of dimension at least~1 and
codimension at least~2.

There have been several other recent approaches to persistence-type
results.  First, Siegel and Johnston proved that for a
complex-balanced system, the positive orthant can be subdivided into
strata in which trajectories must obey certain linear Lyapunov
functions~\cite{JohnstonSiegel_stratum}.  Second, Angeli, De Leenheer,
and Sontag gave persistence criteria that allow for differential
inclusions and time-varying rate constants (i.e., with
temperings)~\cite{ADS09,ADS11}.  For instance they proved that
$\omega$-limit points arising from reaction systems must lie in the
relative interior of faces of the positive orthant defined by
``critical siphons''; therefore, networks without critical siphons are
persistent.  (A~siphon is a subset of the species whose absence is
forward-invariant with respect to the dynamics;
see~\cite{ADS11,ShiuSturmfels} for a precise definition of (critical)
siphon.)  Another approach makes use of the theory of monotone
systems; for instance, see the recent works of Angeli, De Leenheer,
and Sontag~\cite{ADS10}, Banaji and Mierczynski~\cite{BanajiM}, and
Donnell and Banaji~\cite{DB}.  Related work has also used the theory
of Petri nets~\cite{Angeli08,ADS11}.  Additionally, Gopalkrishnan
proved that every network that violates
Conjecture~\ref{conj:persistence} must exhibit a certain catalytic
property~\cite{cat}.  Finally, we refer the reader to work by Gnacadja
\cite{Gnacadja08, GnacadjaPers} that considered a stronger version of
persistence, called ``vacuous persistence'', which allows for
trajectories with initial condition on the boundary of an invariant
polyhedron as well as in the relative interior; he showed that certain
enzymatic networks are persistent in this stronger sense.

For mass-action ODE systems in which persistence is difficult to prove
directly, it is possible that the system is dynamically equivalent
to---that is, gives rise to the same ODE system as---one that is more
easily seen to be persistent.  To this end, Szederk\'enyi and Hangos
gave a method for determining whether a given system is dynamically
equivalent to a complex-balanced or a detailed-balanced one~\cite{SH}.
Similarly, Johnston and Siegel gave algorithms that determine whether
a given system is dynamically equivalent---or more generally, is
linearly conjugate---to one from certain classes (such as weakly
reversible systems)~\cite{JS11,JSS}.


\enlargethispage{.3ex}
The results in the current work are complementary to those described
above: in Section~\ref{s:results} we resolve
Conjectures~\ref{conj:GAC}--\ref{conj:boundedness} for strongly
endotactic networks in the general setting of mass-action differential
inclusions.

\begin{remark}
The class of complex-balanced systems can be extended in a
straightforward way to include some power-law systems, by relaxing the
requirement of a complex-balanced system that its complexes be
nonnegative integer vectors.  In this larger setting, our results
concerning the \GAC remain valid.
\end{remark}


We conclude this section by noting that non-endotactic networks can
fail to be persistent; for instance, the network $A \to B$.

\begin{example}\label{ex:not-persistent}
A less trivial example of a non-endotactic non-persistent network is
\begin{align*}
S_0 + F \overset{\kappa_1}\too S_1 + E \overset{\kappa_2}\too S_2 + F
\qquad
S_2 \overset{\kappa_3}\too S_1\overset{\kappa_4}\too  S_0.
\end{align*}
Write each concentration vector as $x = (x_{S_0},x_{S_1}, x_{S_2},
x_{E}, x_F)$.  In \cite[\S IV.A]{Angeli08}, Angeli showed that if the
reaction rate constants satisfy $\kappa_2 < \kappa_4$, then for the
resulting mass-action ODE system, the boundary steady state $x^* =
(1,0,0,1,0)$ is locally asymptotically stable relative to its
invariant polyhedron.  Hence this non-endotactic network is not
persistent.
\end{example}


\section{Extensions of Birch's theorem}\label{s:extBirch}

This section extends Birch's theorem in order to deduce
Corollary~\ref{cor:bddStoicSubComponent}, which later is used together
with Birch's theorem to prove one of our key results,
Theorem~\ref{thm:LyFncWorks_cpc}.  The reader may wish to skip this
section on the first pass.  The following notation is employed in
this~section.
\begin{notation}\label{not:products}
For vectors $\alpha, \beta \in \Rplus^m$, a constant $\theta \in
\Rplus$, a positive vector $\Theta = (\Theta_1,\ldots,\Theta_m) \in
\Rplus^m$, and $w = (w_1,\ldots,w_m) \in \bR^m$, set
\begin{align*}
\alpha*\beta &= (\alpha_1\beta_1, \ldots, \alpha_m\beta_m) \in\Rplus^m
\\
\alpha/\beta &= (\alpha_1/\beta_1, \ldots, \alpha_m/\beta_m) \in\Rplus^m
\\
\theta^w &= (\theta^{w_1}, \ldots, \theta^{w_m}) \in \Rplus^m
\\
\Theta^w &= \Theta_1^{w_1} \cdots \Theta_m^{w_m} \in \Rplus
\\
\log(\Theta) &= (\log \Theta_1,\ldots,\log \Theta_m).
\end{align*}
Also, $S^{m-1}$ denotes the unit sphere of dimension $m-1$ in $\bR^m$.
\end{notation}

\begin{remark}
The top two displayed notations express the group operations in the
multiplicative group (``positive real algebraic torus'') $\Rplus^m$,
where $\alpha/\beta = \alpha*\beta^{-1}$.
\end{remark}

\subsection{Toric rays}\label{sub:toric rays}

\begin{definition}\label{d:toric_ray}
Let $\alpha\in\Rplus^m$.
\begin{enumerate}
\item%
Let $w\in S^{m-1}$.  The \emph{toric ray} from $\alpha$ in direction
$w$ is the curve
$$%
  \Toric_\alpha(w)= \{ \alpha * \theta^w \mid 1 \leq \theta <
  +\infty\} \subseteq \Rplus^m.
$$
\item%
Let $N\subseteq S^{m-1}$.  The \emph{toric pencil}
$\Toric_\alpha(N)$ from $\alpha$ in directions $N$ is the union of
toric rays in directions $w\in N$.
\end{enumerate}
\end{definition}

\begin{remark}\label{rmk:ray}
The toric ray $\Toric_\alpha(w)$ is the coordinatewise exponential
of the infinite ray in $\bR^m$ with direction~$w$ that originates at
the point $\log(\alpha)$.
\end{remark}

\begin{remark}\label{rmk:polar_c}
For a given $\alpha \in \Rplus^m$, the open rays $\{\log\alpha + (\log
\theta) w \mid 1 <\theta < +\infty\}$ partition the punctured space
$\bR^m \setminus \{\log \alpha\} $.  Indeed, when $\alpha$ is the
multiplicative identity vector $(1,\ldots,1)$, so that $\log \alpha$
is the origin, the coordinates $w \in S^{m-1}$ and $\log \theta \geq
0$ may be viewed as polar coordinates on~$\bR^m$.
\end{remark}

\begin{definition}\label{d:Lyap}
For $\alpha \in \Rplus^m$, the function $g_\alpha: \bR_{\geq 0}^m \to
\bR$ is given by
$$%
  (x_1, \ldots, x_m)
  \longmapsto
  \sum_{i=1}^m x_i \log \frac{x_i}{\alpha_i} - x_i,
$$
where $0\log 0$ is defined to be~$0$.  Because $\lim\limits_{x_i \to
0^+} x_i \log x_i = 0$, this definition is the unique choice that
makes the function $g_\alpha$ continuous on the closed nonnegative
orthant $\bR_{\geq 0}^m$, including the boundary.
\end{definition}

The next lemma shows that the function $g_\alpha$ captures the
geometry of toric rays.

\begin{lemma}\label{l:toricgrad}
Let $\alpha \in \Rplus^m$, and let $w \in \bR^m$ be a unit vector.
For every point $x \in \Toric_\alpha(w)$, either $x = \alpha$ or
$g_\alpha(x)$ is nonzero and has gradient direction~$w$; that is,
$\frac{\nabla g_\alpha(x)}{\lVert \nabla g_\alpha(x) \rVert} = w$.
\end{lemma}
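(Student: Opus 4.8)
The plan is to compute $\nabla g_\alpha$ in closed form and restrict it to the toric ray. First I would differentiate $g_\alpha(x_1,\dots,x_m)=\sum_{i=1}^m\bigl(x_i\log(x_i/\alpha_i)-x_i\bigr)$ coordinatewise on the open orthant $\Rplus^m$, where $g_\alpha$ is smooth. For each $i$ the derivative of $x_i\log(x_i/\alpha_i)$ with respect to $x_i$ is $\log(x_i/\alpha_i)+1$, and the derivative of $-x_i$ is $-1$, so these cancel to give $\partial g_\alpha/\partial x_i=\log(x_i/\alpha_i)$; hence, in the notation of Notation~\ref{not:products}, $\nabla g_\alpha(x)=\log(x/\alpha)$.

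Next I would evaluate this along $\Toric_\alpha(w)$. By Definition~\ref{d:toric_ray}, a point $x\in\Toric_\alpha(w)$ has the form $x=\alpha*\theta^w$ for some $\theta\in[1,\infty)$, so $x_i/\alpha_i=\theta^{w_i}$ and therefore $\log(x_i/\alpha_i)=(\log\theta)\,w_i$ for every $i$; that is, $\nabla g_\alpha(x)=(\log\theta)\,w$. If $\theta=1$ then $x=\alpha$ and the gradient vanishes. If $\theta>1$ then $\log\theta>0$, and since $w$ is a unit vector — in particular nonzero — the vector $\nabla g_\alpha(x)=(\log\theta)\,w$ is nonzero with $\nabla g_\alpha(x)/\lVert\nabla g_\alpha(x)\rVert=w/\lVert w\rVert=w$. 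To see that the two alternatives are exhaustive and mutually exclusive, I would observe that $w\neq 0$ forces some coordinate $w_i\neq 0$, so $x=\alpha$ — equivalently $\theta^{w_i}=1$ — holds precisely when $\theta=1$.

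I do not expect any genuine obstacle: the argument is a one-line gradient computation followed by a substitution. The only points deserving a word of care are that $g_\alpha$ is differentiable exactly on $\Rplus^m$, which is where the toric ray lives, so the chain-rule computation is legitimate; and that it is precisely the unit-length hypothesis on $w$ that makes the normalized gradient equal $w$ on the nose rather than $w/\lVert w\rVert$.
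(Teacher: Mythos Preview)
Your proof is correct and follows essentially the same path as the paper's: compute $\nabla g_\alpha(x)=\log(x/\alpha)$, substitute $x=\alpha*\theta^w$ to obtain $(\log\theta)\,w$, and conclude from $\log\theta>0$ and $\lVert w\rVert=1$. Your write-up simply spells out the partial-derivative computation and the case $\theta=1$ in slightly more detail than the paper, which dispatches both in a single line.
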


In words, the lemma says that along a toric ray from $\alpha$, the
direction of the gradient of~$g_\alpha$ matches the direction of the
toric ray.

\begin{proof}
Let $\alpha$ and $w$ be as in the statement of the lemma, and let
$\alpha \neq x\in\Toric_\alpha(w)$, so $x = \alpha*\theta^w$ for
some $\theta \in (1,\infty)$.  A straightforward calculation shows
that
\begin{align}\label{eq:grad_toric}
\nabla g_\alpha (x) = \log (x/\alpha)  =  (\log \theta) w.
\end{align}
Since $\theta\in (1,\infty)$, the coefficient satisfies $\log\theta >
0$.  Thus, the gradient of $g_\alpha$ at $x$ is in direction~$w$.
Since $w$ is a unit vector, the normalized gradient is $\frac{\nabla
g_\alpha(x)}{\lVert \nabla g_\alpha(x) \rVert} = w$.
\end{proof}

\begin{remark}
The function $g_\alpha$ has a uniqueness property with respect to
toric rays: if $f$ is a function on $\Rplus^m$ whose gradient along
every toric ray from $\alpha$ points in the direction of that toric
ray, then the level sets of $f$ and $g_\alpha$ form the same
foliation.
\end{remark}

\subsection{Outer normal cones}\label{sub:outer}


\begin{definition}\label{d:onc}
Fix a point $x$ of the compactification $[0,\infty]^m$ of $\Rplus^m$.
Denote by
$$%
  \Sigma_0 = \big\{i \in \{1,\ldots,m\} \mid x_i = 0 \big\}
  \quad\text{and}\quad
  \Sigma_\infty = \big\{i \in \{1,\ldots,m\} \mid x_i = \infty\big\}
$$
the sets of zero and infinite coordinates of~$x$, respectively.
$[0,\infty]^m$ has \emph{outer normal cone}
$$%
C_x[0,\infty]^m = \{u \in \bR^m \mid
  \op{supp}(u^-) \subseteq \Sigma_0
  \text{ and }
  \op{supp}(u^+) \subseteq \Sigma_\infty\}
$$
at $x$, where
$u = u^+ - u^-$
writes $u$ as a difference of nonnegative vectors with disjoint
support.
\end{definition}


The following lemma states that if a sequence $x(n)$ in $\Rplus^m$
converges to a boundary point $x^* \in \partial \cpcOrth$, then the
limiting direction of $\nabla g_\alpha (x(n))$, if it exists, must
lie in the outer normal cone of the hypercube $[0,\infty]^m$ at
$x^*$.  It also states a converse.


\begin{lemma}\label{l:limDirs}
Fix a point $x^*$ in the compactification $[0, \infty]^m$ of
$\Rplus^m$.  Let $\alpha \in \Rplus^m$ be any positive vector.  A unit
vector $u \in \bR^m$ lies in the outer normal cone of $[0, \infty]^m$
at~$x^*$ if and only if there exists a sequence $x(n)$ in~$\Rplus^m$
such that
\begin{align}\label{eq:lim_grad}
  x(n) \to x^*
  \quad\text{and}\quad
  \frac{\nabla g_\alpha\big(x(n)\big)}
       {\lVert\nabla g_\alpha\big(x(n)\big)\rVert} \to u.
\end{align}
\end{lemma}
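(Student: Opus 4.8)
The plan is to prove both implications by reducing everything to the identity $\nabla g_\alpha(x) = \log(x/\alpha)$ of Lemma~\ref{l:toricgrad} (equation~\eqref{eq:grad_toric}), so that the $i$-th coordinate of the unnormalized gradient at $x$ is simply $\log(x_i/\alpha_i)$. Since $\alpha_i \in \Rplus$ is fixed, this coordinate tends to $-\infty$ as $x_i \to 0$, to $+\infty$ as $x_i \to \infty$, and stays bounded while $x_i$ remains in a compact subinterval of $\Rplus$. I treat the substantive case in which $x^*$ is a boundary point of $\cpcOrth$, i.e.\ $\Sigma_0 \cup \Sigma_\infty \neq \emptyset$ in the notation of Definition~\ref{d:onc}; at an interior point the outer normal cone is trivial.

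For the ``only if'' direction, suppose $x(n) \to x^*$ in $\Rplus^m$ and the normalized gradients converge to a unit vector $u$. Because $x^*$ has a coordinate equal to $0$ or to $\infty$, the corresponding coordinate of $\nabla g_\alpha(x(n))$ has absolute value tending to infinity, so $L_n := \lVert \nabla g_\alpha(x(n)) \rVert \to \infty$. Now read off each coordinate of $u = \lim_n \nabla g_\alpha(x(n))/L_n$: if $i \notin \Sigma_0 \cup \Sigma_\infty$ then $\log(x_i(n)/\alpha_i)$ stays bounded while $L_n \to \infty$, forcing $u_i = 0$; if $i \in \Sigma_0$ then $\log(x_i(n)/\alpha_i)$ is eventually negative, so $u_i \le 0$; and if $i \in \Sigma_\infty$ then it is eventually positive, so $u_i \ge 0$. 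These three facts say exactly that $\op{supp}(u^-) \subseteq \Sigma_0$ and $\op{supp}(u^+) \subseteq \Sigma_\infty$, i.e.\ $u \in C_{x^*}\cpcOrth$.

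For the ``if'' direction, let $u$ be a unit vector in $C_{x^*}\cpcOrth$ and construct a witnessing sequence coordinatewise. If $u_i \neq 0$, set $x_i(n) = \alpha_i e^{n u_i}$; this tends to $\infty$ when $u_i > 0$ and to $0$ when $u_i < 0$, which matches $x^*_i$ precisely because $\op{supp}(u^+) \subseteq \Sigma_\infty$ and $\op{supp}(u^-) \subseteq \Sigma_0$. If $u_i = 0$, choose $x_i(n) \to x^*_i$ with $\log(x_i(n)/\alpha_i) = o(n)$, which is possible since $\alpha_i \in \Rplus$ (take $x_i(n) = x^*_i$ when $0 < x^*_i < \infty$, and $x_i(n) = \alpha_i e^{\pm \sqrt n}$ when $x^*_i = \infty$ or $x^*_i = 0$). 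Then $x(n) \to x^*$, and $\nabla g_\alpha(x(n)) = n u + r_n$ with $\lVert r_n \rVert = o(n)$, so $L_n = n + o(n)$ and hence $\nabla g_\alpha(x(n))/L_n \to u/\lVert u \rVert = u$, as required.

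The whole argument is essentially bookkeeping once equation~\eqref{eq:grad_toric} is in hand; the only step needing a little care is the construction in the ``if'' direction, where convergence $x(n) \to x^*$ and convergence of gradient directions to $u$ must be arranged simultaneously---in particular, the coordinates $i$ with $u_i = 0$ but $x^*_i \in \{0,\infty\}$ must be steered to $0$ or $\infty$ slowly enough not to disturb the dominant linear growth $nu$ supplied by the coordinates with $u_i \neq 0$. I expect that to be the only genuine (and mild) obstacle.
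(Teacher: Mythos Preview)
Your argument is correct and follows the same strategy as the paper: both directions reduce to the identity $\nabla g_\alpha(x) = \log(x/\alpha)$. (A minor terminological slip: you have the ``if'' and ``only if'' labels swapped relative to the biconditional as stated.) For the direction that produces a sequence from a given $u$, the paper sends a toric ray $x(n) = \beta * \theta(n)^u$ from a carefully chosen base point~$\beta$, obtaining $\nabla g_\alpha(x(n)) = (\log\theta(n))\,u + c^*$ with $c^*$ constant; you instead build the coordinates independently, allowing $o(n)$ drift in coordinates with $u_i = 0$. Your construction is in fact the more careful one: the paper's case analysis writes ``$x^*_i = 0$ (thus $u_i < 0$)'', but the outer normal cone condition only guarantees the reverse implication, so the case $u_i = 0$ with $x^*_i \in \{0,\infty\}$---precisely the one you flag as needing attention---is not handled by the paper's toric ray, which would stall at $x(n)_i = 1$. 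Your $e^{\pm\sqrt n}$ patch is exactly what closes that gap.
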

\begin{proof}
First suppose that the sequence $x(n)$ for $n \in \Zplus$
satisfies~\eqref{eq:lim_grad}.  Since $\nabla g_\alpha(x(n)) =
\log(x(n)/\alpha)$ by~(\ref{eq:grad_toric}), the $i$th coordinate of
$\nabla g_\alpha(x(n))$ goes to $-\infty$ whenever $x^*_i=0$.
Similarly, the $i$th coordinate of $\nabla g_\alpha(x(n))$ goes to
$\infty$ whenever $x^*_i = \infty$.  All other coordinates go to a
finite limit, namely $\log\big(x^*_i/\alpha_i\big))$.  Therefore, the
limit $u$ of the sequence $\frac{\nabla g_\alpha (x(n)) }{ \lVert
\nabla g_\alpha (x(n)) \rVert}$ must lie in the outer normal cone
at~$x^*$.

For the converse, let $u \in \bR^m$ be a unit vector in the outer
normal cone of $[0, \infty]^m$ at~$x^*$.  Define the point $\beta \in
\Rplus^m$ with coordinates
$$
\beta_i =
  \begin{cases}
  x^*_i &\text{if } x^*_i\in (0,\infty)
  \\
  1 &\text{if } x^*_i = 0 \text{ or } x^*_i = \infty.
  \end{cases}
$$
Fix a sequence $\thetaN$ in $\bR_{>1}$ with limit $+\infty$.  Consider
the sequence of points $x(n)=\beta * \thetaN^u$ along the toric ray
from $\beta$ in direction~$u$.
For $i = 1,\ldots,m$,
\begin{align*}
  x(n)_i =
  \begin{cases}
    x_i^*\cdot\thetaN^{u_i}&\text{if } 0<x_i^*< \infty\text{ (thus } u_i = 0)
  \\1 \cdot \thetaN^{u_i} &\text{if } x_i^* = 0 \text{ (thus } u_i < 0)
  \\1 \cdot \thetaN^{u_i} &\text{if } x_i^* = \infty\text{ (thus } u_i > 0).
  \end{cases}
\end{align*}
Hence, in the first case above, $x(n)_i = x_i^*$ for all~$n$; in the
second case, $x(n)_i \to 0 = x_i^*$; and in the third case, $x(n)_i
\to \infty =x_i^*$.  So, to show~\eqref{eq:lim_grad}, it remains only
to prove
that the normalized gradient of $g_\alpha$ at $x(n)$ converges to $u$.
It is straightforward to verify that
$$%
  \nabla g_\alpha\big(x(n)\big) = \big(\log \thetaN\big) u + c^*,
$$
where $c^*$ is the vector with coordinates
$c_i = \log(x^*_i/\alpha_i)$ if $x^*_i\in (0,\infty)$ and 0 if $x^*_i
= 0$ or $x^*_i = \infty$.  Thus, as $\log \thetaN$ goes to $\infty$,
the direction of $\nabla g_\alpha (x(n))$ converges to $u$.
\end{proof}

\begin{lemma}\label{l:oncint}
Let $H\subseteq\bR^m$ be a linear subspace.  Let $p\in\Rplus^m$.
Consider the polyhedron $\invtPoly = (p + H) \cap \bR^m_{\geq 0}$.
Let $x^* \in \partial[0,\infty]^m$.  If $x^* \in \ol\invtPoly$, where
the closure is taken in $[0,\infty]^m$, then the outer normal cone of
$[0,\infty]^m$ at $x^*$ meets $H^\perp$
only at the origin.
\end{lemma}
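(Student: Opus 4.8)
I would argue by contradiction: suppose some nonzero $u \in \bR^m$ lies in $H^\perp$ and in the outer normal cone $C_{x^*}[0,\infty]^m$, and derive an impossibility from the hypothesis $x^* \in \ol\invtPoly$, the support conditions defining the normal cone, and the strict positivity of $p$. The first step is to extract a sequence: since $[0,\infty]^m$ is compact and metrizable and $x^*$ lies in the closure of $\invtPoly$ taken in $[0,\infty]^m$, I choose $x(n) \in \invtPoly$ with $x(n) \to x^*$ coordinatewise. Each $x(n)$ is a finite nonnegative vector with $x(n) - p \in H$, so $u \in H^\perp$ gives $\<u, x(n)\> = \<u, p\>$ for every $n$; call this finite constant $c$.

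Next I would expand $c = \<u, x(n)\>$ coordinatewise, using that $u \in C_{x^*}[0,\infty]^m$ forces $u_i = 0$ off $\Sigma_0 \cup \Sigma_\infty$ (the index sets of zero and infinite coordinates of $x^*$ from Definition~\ref{d:onc}), $u_i \le 0$ on $\Sigma_0$, and $u_i \ge 0$ on $\Sigma_\infty$. Thus $\<u, x(n)\> = \sum_{i \in \Sigma_\infty} u_i\, x(n)_i + \sum_{i \in \Sigma_0} u_i\, x(n)_i$, where each term of the first sum is nonnegative while $x(n)_i \to +\infty$ there, and the finitely many terms of the second sum each tend to $0$ because $x(n)_i \to 0$ there. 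Now the dichotomy. If $u_i > 0$ for some $i \in \Sigma_\infty$, then the first sum tends to $+\infty$ and the second stays bounded, so $\<u, x(n)\> \to +\infty$, contradicting that it equals the finite constant $c$. Otherwise $u$ is supported on $\Sigma_0$ with all coordinates $\le 0$ and, since $u \ne 0$, at least one coordinate $< 0$; then $\<u, x(n)\> \to 0$ forces $c = 0$, yet $c = \<u, p\> = \sum_{i \in \Sigma_0} u_i p_i < 0$ because $p \in \Rplus^m$ is strictly positive. Either way this is a contradiction, so $u = 0$, which is the assertion.

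I expect the only delicate point to be the bookkeeping of the two coordinate regimes: the definition of the outer normal cone is precisely what guarantees that the infinite coordinates of $x^*$ pair only with the nonnegative part of $u$ and the zero coordinates only with the nonpositive part. One must also notice that the hypothesis $p \in \Rplus^m$ is exactly what rules out a nonzero $u$ supported on $\Sigma_0$ (if $p$ had a zero coordinate the claim would fail). There is no analytic obstacle: once the sequence $x(n)$ is in hand, everything reduces to a finite-sum limit. As an alternative one could obtain such an $x(n)$ together with convergence of the normalized gradient $\nabla g_\alpha(x(n))$ from Lemma~\ref{l:limDirs}, but the direct computation above is shorter.
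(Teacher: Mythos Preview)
Your proof is correct and follows essentially the same approach as the paper: take a sequence $x(n)\in\invtPoly$ converging to $x^*$, use that $\langle u,x(n)\rangle$ is constant for $u\in H^\perp$, and split into the cases $u^+\neq 0$ (where the inner product blows up) versus $u^+=0,\ u^-\neq 0$ (where it tends to~$0$). The only cosmetic difference is in the second case: the paper picks $x(n)\in\relIntP$ and argues that $\langle u,x(n)\rangle$ is strictly negative yet tending to~$0$, hence nonconstant, while you identify the constant explicitly as $\langle u,p\rangle<0$ using $p\in\Rplus^m$; these are equivalent observations.
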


The key geometric insight for the proof of Lemma~\ref{l:oncint}
is that if a translate~$L'$ of a support hyperplane $L$ intersects the
interior of a polytope~$P$, then $L'$ fails to intersect the support
face $L \cap P$ of the untranslated hyperplane, since $\<u, L'\> \neq
\<u, L \cap P\>$ for any $u \in L^\perp$.

\begin{proof}
Take a sequence $x(n)$ in $\relIntP$ with $\lim\limits_{n \to \infty}
x(n) = x^*$, where the limit is taken in the compactification
$[0,\infty]^m$,
and suppose that $u$ is a unit vector in $C_{x^*}[0,\infty]^m$.  At
least one of $u^+$ and $u^-$ is nonzero.  If $u^+ \neq 0$, then
$$%
  \<u, x(n)\>
  \ =\
  \<u^+, x(n)\> - \<u^-, x(n)\>
  \ \geq\
  \<u^+, x(n)\>
  \ \to\
  \infty
$$
as $n \to \infty$, because the support of~$u^+$ is contained in the
set of $\infty$-coordinates $\Sigma_\infty$ of $x^*$ and $x(n) \to
x^*$.
But if $v \in H^\perp$ then $\<v, x\>$ takes a constant finite value
for all $x \in \invtPoly$; therefore $u \in C_{x^*}[0,\infty]^m
\implies u \not\in H^\perp$.  In the remaining case, when $u^+ = 0$
and $u^- \neq 0$,
$$%
  \<u, x(n)\>
  \ =\
  -\<u^-, x(n)\>
  \ \to\
  0
$$
as $n \to \infty$, because $\op{supp}(u^-) \subseteq \Sigma_0$.  This
also prevents $u \in H^\perp$: the inner product $\<v, x(n)\>$
maintains a constant value for all $n$ when $v \in H^\perp$, whereas
$\<u, x(n)\>$ is strictly negative and increasing toward~$0$, because
$x(n)$ is a strictly positive vector.
\end{proof}

\subsection{Birch's theorem}\label{sub:birch}

In Section~\ref{s:extensions}, we prove two extensions to the
following theorem.

\begin{theorem}[Birch's theorem]\label{t:birch}
Let $H\subseteq\bR^m$ be a linear subspace, and let $\alpha,p
\in\bR^m_{>0}$.  The relative interior of the polyhedron $\invtPoly =
(p+H)\cap\bR^m_{\geq 0}$ intersects the toric pencil
$\Toric_\alpha(\HPerp)$ at exactly one point.
\end{theorem}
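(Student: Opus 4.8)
\textit{Proof plan.}
The plan is to obtain the asserted intersection point as the unique minimizer of the strictly convex ``pseudo-Helmholtz'' function $g_\alpha$ of Definition~\ref{d:Lyap} restricted to $\invtPoly$, and then to read off from the first-order optimality condition that this minimizer lies on $\Toric_\alpha(\HPerp)$.

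First I would record three elementary facts. (a)~The set $\invtPoly=(p+H)\cap\Rnn^m$ is nonempty (it contains $p$) and closed, and its relative interior in the affine space $p+H$ is $(p+H)\cap\Rplus^m$: for any $x\in\invtPoly$ the half-open segment from $x$ to $p$ has every coordinate $\geq tp_i>0$ for $t\in(0,1]$, so $(p+H)\cap\Rplus^m$ is a dense, relatively open, convex subset of $\invtPoly$, hence equals $\relIntP$. (b)~The function $g_\alpha$ is continuous on $\Rnn^m$, and on $\Rplus^m$ it is smooth with gradient $\nabla g_\alpha(x)=\log(x/\alpha)$ (see~\eqref{eq:grad_toric}) and Hessian $\operatorname{diag}(1/x_1,\dots,1/x_m)$, so $g_\alpha$ is strictly convex on $\Rnn^m$. (c)~$g_\alpha(x)\to+\infty$ as $\lVert x\rVert\to\infty$ in $\Rnn^m$ (each coordinate contributes at least $-\alpha_i$, while the coordinate of largest magnitude contributes a superlinear amount), so the sublevel sets of $g_\alpha|_{\invtPoly}$ are compact. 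By (a) and (c), $g_\alpha$ attains its minimum on $\invtPoly$ at some point $x^*$, and by (b) this minimizer is unique.

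Next I would show $x^*\in\relIntP$, i.e.\ $x^*\in\Rplus^m$. If instead $x^*_i=0$ for some $i$, consider $\phi(t)=g_\alpha\big(x^*+t(p-x^*)\big)$ for $t\in[0,1]$; by (a) this segment lies in $\invtPoly$, and in $\Rplus^m$ for $t>0$, so $\phi$ is continuous on $[0,1]$, smooth on $(0,1]$, with $\phi'(t)=\sum_j \log\!\big((x^*_j+t(p_j-x^*_j))/\alpha_j\big)(p_j-x^*_j)$, whose $i$-th summand tends to $-\infty$ as $t\to0^+$ while the others stay bounded. Hence $\phi'(t)<0$ for small $t>0$, so $\phi(t)<\phi(0)=g_\alpha(x^*)$, contradicting minimality over $\invtPoly$. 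So $x^*$ is strictly positive, hence lies in $\relIntP$. Since $\relIntP$ is relatively open in $p+H$, the point $x^*$ is then a local, hence (by convexity of $g_\alpha|_{p+H}$) a global, minimizer of $g_\alpha$ on $p+H$, so the first-order condition gives $\nabla g_\alpha(x^*)\perp H$; that is, $\log(x^*/\alpha)\in\HPerp$, and writing $\log(x^*/\alpha)=(\log\theta)w$ with $\theta\geq 1$ and $w$ a unit vector in $\HPerp$ shows $x^*=\alpha*\theta^w\in\Toric_\alpha(\HPerp)$, proving existence. For uniqueness, any $y\in\relIntP\cap\Toric_\alpha(\HPerp)$ has $\nabla g_\alpha(y)=\log(y/\alpha)\in\HPerp$, hence is a critical point of the strictly convex restriction $g_\alpha|_{p+H}$; a strictly convex function has at most one critical point on an affine space, so $y=x^*$.

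The step I expect to be the only genuine obstacle is the boundary-avoidance claim $x^*\in\relIntP$: it is exactly here that the logarithmic blow-up of $\nabla g_\alpha$ near $\partial\Rnn^m$ is exploited, and it is essential that $p$ — which simultaneously witnesses nonemptiness of $\relIntP$ and supplies the escape direction $p-x^*$ — is strictly positive.
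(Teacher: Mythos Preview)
Your proof is correct and is essentially the classical convex-optimization argument (going back to Horn and Jackson~\cite[Lemma~4B]{HornJackson}). Note, however, that the paper does not supply its own proof of Theorem~\ref{t:birch}: it states the result and cites Birch~\cite{Birch} and Horn--Jackson~\cite{HornJackson} as references, then proceeds to prove the \emph{extensions} (Theorems~\ref{t:birchExt1} and~\ref{thm:extBirch}). So there is no in-paper argument to compare against; what you have written is precisely the standard proof that the cited references contain, and it is sound in every step---coercivity and strict convexity of $g_\alpha$ give a unique minimizer on~$\invtPoly$, the logarithmic blow-up of $\nabla g_\alpha$ forces the minimizer into $\relIntP$, and the first-order condition $\nabla g_\alpha(x^*)=\log(x^*/\alpha)\in H^\perp$ is exactly membership in $\Toric_\alpha(H^\perp)$.

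One cosmetic remark: in your boundary-avoidance step you speak of ``the $i$-th summand'' tending to $-\infty$, but of course several coordinates of~$x^*$ may vanish simultaneously; each such coordinate contributes a term $p_j\log(tp_j/\alpha_j)\to-\infty$, and these all have the same sign, so the conclusion is unaffected. You may wish to phrase it as ``each summand with $x^*_j=0$ tends to $-\infty$'' for clarity.
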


Variants of Birch's theorem appear across the mathematical sciences.
In algebraic statistics, toric pencils
appear as log-linear statistical models, and in this setting,
Theorem~\ref{t:birch} was first proved by Birch in 1963~\cite{Birch}.
In dynamical systems, in the setting of chemical reaction systems, the
theorem was proved by Horn and Jackson~\cite[Lemma~4B]{HornJackson}.

In the setting of quasi-thermostatic chemical reaction systems (a
class introduced by Horn and Jackson that includes complex-balanced
systems and therefore deficiency zero systems as
well~\cite{Fein87,HornJackson}), the toric pencil
in Birch's theorem is equal to the set of positive steady states.  In
that context, the toric pencil
usually is written as $\{c \in \Rplus^m \mid \log \frac{c}\alpha \in
\HPerp\}$, where $\alpha$ is a given steady state.

Theorem~\ref{t:birch} prompts the following definition.

\begin{definition}
Let $H\subseteq\bR^m$ be a subspace, and let $\alpha,p \in\bR^m_{>0}$.
The $\alpha$-\emph{Birch point} of the polyhedron $\invtPoly =
(p+H)\cap\bR^m_{\geq 0}$ is the unique point in the intersection
$\op{int}\invtPoly\cap\Toric_\alpha(\HPerp)$.
\end{definition}
\noindent
We refer to the \emph{Birch point} when the choice of $\alpha$ and
$\invtPoly$ is clear from context.  Horn~\cite{Horn74} conjectured
that every Birch point of a complex-balanced system is a global
attractor of the corresponding invariant polyhedron $\op{int}\invtPoly$
(Conjecture~\ref{conj:GAC}).

The following is a conventional form of Birch's theorem in the setting
of algebraic statistics, as stated in the book by Pachter and
Sturmfels~\cite[Theorem 1.10]{ASCB}.

\begin{theorem}[Birch's theorem, restated]\label{t:birchRestated}
Fix a $d \times m$ real matrix $A$ with columns denoted by $a_1,
\ldots, a_m$ and positive vectors $\alpha, p \in\bR^m_{>0}$.  The
image of the monomial map
\begin{align*}
  f_{A,\alpha}: \Rplus^d &\to \Rplus^m
\\                \Theta &\mapsto\alpha*(\Theta^{a_1},\ldots,\Theta^{a_m})
\end{align*}
intersects the polyhedron $\invtPoly = \{q \in \Rnn^m \mid Aq = A p\}$
in a single point.
\end{theorem}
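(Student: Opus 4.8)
The plan is to deduce this statement directly from Theorem~\ref{t:birch}, to which it is essentially a translation into matrix/monomial language. First I would set $H = \ker A \subseteq \bR^m$, so that $H^\perp$ is the row space of $A$, i.e.\ the span of the rows of $A$. With this choice the polyhedron $\invtPoly = \{q \in \Rnn^m \mid Aq = Ap\}$ appearing in Theorem~\ref{t:birchRestated} is exactly the polyhedron $(p + H) \cap \bR^m_{\geq 0}$ appearing in Theorem~\ref{t:birch}, since $Aq = Ap \iff A(q-p) = 0 \iff q - p \in \ker A = H$.

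The heart of the argument is the identification of the image of the monomial map $f_{A,\alpha}$ with the toric pencil $\Toric_\alpha(H^\perp)$. By definition, a point $c \in \Rplus^m$ lies in $\Toric_\alpha(H^\perp)$ iff $c = \alpha * \theta^w$ for some $\theta \geq 1$ and some unit vector $w \in H^\perp$; equivalently (absorbing $\log\theta \geq 0$ into the length of the vector, and noting the origin corresponds to $\theta = 1$) iff $\log(c/\alpha) \in H^\perp$, i.e.\ iff $\log(c/\alpha)$ lies in the row space of $A$. On the other hand, $c$ lies in the image of $f_{A,\alpha}$ iff $c = \alpha * (\Theta^{a_1},\ldots,\Theta^{a_m})$ for some $\Theta \in \Rplus^d$, i.e.\ iff $c_j/\alpha_j = \Theta^{a_j}$ for all $j$, i.e.\ iff $\log(c_j/\alpha_j) = \langle \log\Theta, a_j\rangle$ for all $j$. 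Writing $v = \log\Theta \in \bR^d$ (which ranges over all of $\bR^d$ as $\Theta$ ranges over $\Rplus^d$), this says precisely that the vector $\log(c/\alpha) = (\log(c_1/\alpha_1),\ldots,\log(c_m/\alpha_m))$ equals $A^{\mathsf T} v$ for some $v$, i.e.\ that $\log(c/\alpha)$ lies in the column space of $A^{\mathsf T}$, which is the row space of $A$, which is $H^\perp$. Thus the image of $f_{A,\alpha}$ equals $\Toric_\alpha(H^\perp)$ as subsets of $\Rplus^m$.

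Given these two identifications, Theorem~\ref{t:birch} asserts that $\relInt(\invtPoly) \cap \Toric_\alpha(H^\perp)$ is a single point. To finish I must upgrade ``relative interior'' to the full polyhedron $\invtPoly$ claimed in Theorem~\ref{t:birchRestated}: since the image of $f_{A,\alpha}$ consists of strictly positive vectors, any point of $\invtPoly \cap \op{image}(f_{A,\alpha})$ already lies in $\Rplus^m$, hence in the relative interior of $\invtPoly = (p+H)\cap \bR^m_{\geq 0}$ (a face containing an interior point of $\Rplus^m$ must be the whole polyhedron's relative interior, since $\invtPoly \subseteq p + H$ and the only face of $\invtPoly$ meeting $\Rplus^m$ is $\relInt(\invtPoly)$ itself). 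Therefore $\invtPoly \cap \op{image}(f_{A,\alpha}) = \relInt(\invtPoly) \cap \Toric_\alpha(H^\perp)$, a single point, as desired.

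I do not expect a serious obstacle here: the only point requiring a little care is the bookkeeping between the two parametrizations of the toric pencil (unit vector $w$ together with $\theta \geq 1$, versus an arbitrary element of $H^\perp$), and the observation — already implicit in Remark~\ref{rmk:polar_c} — that $\{(\log\theta)w \mid \theta \geq 1,\ w \in S^{m-1} \cap H^\perp\}$ sweeps out all of $H^\perp$. The substantive content is entirely contained in Theorem~\ref{t:birch}.
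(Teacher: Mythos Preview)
Your proof is correct and follows essentially the same approach as the paper, which in Remark~\ref{rmk:twoBThmEquiv} reduces Theorem~\ref{t:birchRestated} to Theorem~\ref{t:birch} by setting $H = \ker A$ and identifying the image of $f_{A,\alpha}$ with $\Toric_\alpha(H^\perp)$. Your logarithm-based verification of this identity is a bit cleaner than the paper's explicit back-and-forth constructions, and you additionally address the relative-interior discrepancy between the two statements, which the paper passes over in silence.
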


\begin{remark}\label{rmk:twoBThmEquiv}
To see why Theorems~\ref{t:birch} and~\ref{t:birchRestated} are
equivalent, first observe that the two definitions of the polyhedron
$\invtPoly$ coincide because $H = \ker A$.  It remains to show that
\begin{align}\label{eq:toricRaySet}
f_{A, \alpha} ( \Rplus^d )
  = \Toric_\alpha ( \HPerp )
  = \{\alpha * \theta^w \mid \theta \in \Rplus \text{ and } w \in H^\perp\}.
\end{align}
Given a vector $\alpha*(\Theta^{a_1}, \ldots, \Theta^{a_m})$ in the
image of $f_{A,\alpha}$, when $\theta = \Theta_1$ and
$$
  w = a(1)+\big(\log_{\Theta_1}(\Theta_2)\big) a(2)+ \cdots +
  \big(\log_{\Theta_1}(\Theta_d)\big) a(d),
$$
where $a(j)$ denotes the $j$th row of~$A$, the point
$\alpha*(\Theta^{a_1}, \ldots, \Theta^{a_m}) = \alpha * \theta^w$ lies
in the set on the right-hand side of \eqref{eq:toricRaySet}.
Conversely, given $\alpha * \theta^w$ where $\theta >0$ and $w$ is a
linear combination of the rows of $A$: $w = c_1a(1) + \cdots + c_d
a(d)$, then for $\Theta = \big(\theta^{c_1}, \ldots,
\theta^{c_d}\big)$, it follows that $\alpha*\theta^w =
f_{A,\alpha}(\Theta)$ lies in the image of the map $f_{A,\alpha}$.
Thus \eqref{eq:toricRaySet}~holds.
\end{remark}

\subsection{Extensions}\label{s:extensions}

We extend Birch's theorem to the compactification $\cpcOrth$ of the
nonnegative orthant $\Rnn^m$.

\begin{theorem}[Birch's theorem at infinity]\label{t:birchExt1}
Fix a linear subspace $H\subseteq\bR^m$ and $\alpha,p \in\bR^m_{>0}$.
The $\alpha$-Birch point of the polyhedron $\invtPoly =
(p+H)\cap\bR^m_{\geq 0}$ is the unique point in the intersection
\begin{align}\label{eq:ext_B_intersection}
  \ol\invtPoly \cap \ol{\Toric_\alpha(H^\perp)},
\end{align}
where the two closures are taken in the compactification
$[0,\infty]^m$.
\end{theorem}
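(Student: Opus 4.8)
The plan is to deduce Theorem~\ref{t:birchExt1} from the ordinary Birch theorem (Theorem~\ref{t:birch}) by showing that no \emph{new} intersection point appears on the boundary $\partial[0,\infty]^m$. Birch's theorem already gives exactly one point of $\invtPoly \cap \Toric_\alpha(H^\perp)$ in the open orthant $\Rplus^m$, and that point lies in both closures in \eqref{eq:ext_B_intersection}. So it suffices to prove that the intersection $\ol\invtPoly \cap \ol{\Toric_\alpha(H^\perp)}$ contains no point of $\partial[0,\infty]^m$.

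First I would fix a putative boundary point $x^* \in \partial[0,\infty]^m$ lying in both closures, and derive a contradiction. Since $x^* \in \ol{\Toric_\alpha(H^\perp)}$, there is a sequence of points $x(n) = \alpha * \theta(n)^{w(n)}$ on toric rays (with $w(n) \in H^\perp$ unit vectors, $\theta(n) > 1$) converging to $x^*$. By compactness of the unit sphere $S^{m-1}\cap H^\perp$, pass to a subsequence so that $w(n) \to w$ for some unit vector $w \in H^\perp$. The key computation, which is exactly \eqref{eq:grad_toric}, is that $\nabla g_\alpha(x(n)) = \log(x(n)/\alpha) = (\log\theta(n))\, w(n)$, so the normalized gradient $\nabla g_\alpha(x(n))/\lVert \nabla g_\alpha(x(n))\rVert$ equals $w(n)$ and hence converges to $w$. (If $x^*$ is a boundary point then some coordinate of $x(n)$ tends to $0$ or $\infty$, forcing $\log\theta(n) \to \infty$ — and in any case $w \neq 0$ since it is a unit vector, so this normalized gradient argument is legitimate.) Now apply the forward direction of Lemma~\ref{l:limDirs}: since $x(n) \to x^*$ and the normalized gradients converge to $w$, the unit vector $w$ must lie in the outer normal cone $C_{x^*}[0,\infty]^m$. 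On the other hand, $w \in H^\perp$. But $x^* \in \ol\invtPoly$ by hypothesis, so Lemma~\ref{l:oncint} tells us the outer normal cone of $[0,\infty]^m$ at $x^*$ meets $H^\perp$ only at the origin. This forces $w = 0$, contradicting that $w$ is a unit vector.

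It follows that $\ol\invtPoly \cap \ol{\Toric_\alpha(H^\perp)}$ is contained in $\Rplus^m$, where it coincides with $\invtPoly \cap \Toric_\alpha(H^\perp)$ — wait, one must be slightly careful: a point in $\Rplus^m$ that lies in $\ol{\Toric_\alpha(H^\perp)}$ need not a priori lie in $\Toric_\alpha(H^\perp)$ itself. However, the same normalized-gradient argument handles this: if $x^* \in \Rplus^m$ is a limit of points $x(n) = \alpha*\theta(n)^{w(n)}$ with $w(n) \in H^\perp$, then either $\theta(n)$ stays bounded (and a subsequence converges to a genuine point of $\Toric_\alpha(H^\perp)$, using that $H^\perp \cap S^{m-1}$ is compact and $\Toric_\alpha$ of a compact set of directions with bounded $\theta$ is closed), or $\log\theta(n) \to \infty$, in which case $\log(x^*/\alpha) = \lim (\log\theta(n)) w(n)$ would have to be a finite vector that is a limit of unbounded multiples of unit vectors in $H^\perp$ — impossible unless the limiting unit direction is orthogonal to itself. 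More cleanly: $\nabla g_\alpha(x^*) = \log(x^*/\alpha) = \lim (\log\theta(n)) w(n)$, and since the left side is a fixed finite vector while $\log\theta(n)\to\infty$ and $\lVert w(n)\rVert = 1$, this is a contradiction; hence $\theta(n)$ is bounded and $x^* \in \Toric_\alpha(H^\perp)$ after all. So $\ol\invtPoly \cap \ol{\Toric_\alpha(H^\perp)} = \invtPoly \cap \Toric_\alpha(H^\perp)$, and in fact $= \relIntP \cap \Toric_\alpha(H^\perp)$ since the toric pencil lies in $\Rplus^m$ and meets the boundary facets of $\invtPoly$ nowhere (again by Lemma~\ref{l:oncint}, applied at a boundary point of $\invtPoly$ — or simply because $\Toric_\alpha(H^\perp)\subseteq\Rplus^m$ whereas $\partial\invtPoly \subseteq \partial\Rnn^m$). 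By Theorem~\ref{t:birch} this intersection is the single $\alpha$-Birch point, completing the proof.

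I expect the main obstacle to be the bookkeeping around \emph{where} the closures are taken and what exactly an accumulation point of a toric pencil looks like: one must be disciplined about distinguishing $\ol{\Toric_\alpha(H^\perp)}$ (closure in $[0,\infty]^m$) from $\Toric_\alpha(H^\perp)$, handle the compactness extraction of a limiting direction $w$ carefully, and verify that $\log\theta(n)\to\infty$ is forced precisely when the limit hits the boundary — all of which are routine once Lemmas~\ref{l:limDirs} and~\ref{l:oncint} are in hand, but easy to fumble. The conceptual content is entirely carried by those two lemmas plus the gradient identity \eqref{eq:grad_toric}; the theorem is essentially the statement that the "bad directions" in which toric rays could run off to the boundary of a face of $\invtPoly$ are exactly the directions in $H^\perp$, and Lemma~\ref{l:oncint} says those directions never point out of $\invtPoly$.
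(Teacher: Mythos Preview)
Your argument is correct and is essentially identical to the paper's proof: both reduce to showing no boundary point of $[0,\infty]^m$ lies in the intersection, take a sequence along toric rays converging to a putative boundary point, extract a limiting unit direction $w\in H^\perp$ via compactness, invoke Lemma~\ref{l:toricgrad} to identify $w$ as the limit of normalized gradients, apply Lemma~\ref{l:limDirs} to place $w$ in the outer normal cone, and then contradict Lemma~\ref{l:oncint}. Your extra paragraph handling interior accumulation points of the toric pencil is unnecessary---$\Toric_\alpha(H^\perp)$ is the image under the coordinatewise exponential (a homeomorphism $\bR^m\to\Rplus^m$) of the closed affine subspace $\log\alpha + H^\perp$, hence already closed in $\Rplus^m$---but it does no harm.
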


We call this theorem ``Birch's theorem at infinity'' because we are
studying the same intersection problem as in the original Birch's
theorem, but now the parameter $\theta$ along toric rays is allowed to
take the value $+\infty$.

\begin{proof}
The intersection \eqref{eq:ext_B_intersection} has just one point in
$\bR^m_{>0}$, namely the $\alpha$-Birch point of~$\invtPoly$; this
follows from the usual Birch's theorem (Theorem~\ref{t:birch}).
Therefore, we need only prove the lack of boundary points $x^*$ in the
intersection~\eqref{eq:ext_B_intersection}.
To this end, let $x^* \in \ol{\Toric_\alpha(\HPerp)} \cap
\partial \ol\invtPoly$ and choose a sequence $x(n) = \alpha *
\thetaN^{\wN} \to x^*$, where $\thetaN \to \infty$ and $\wN \in \HPerp
\cap S^{m-1}$.  Taking a subsequence if necessary, assume that $w(n)$
converges to some $u \in \HPerp \cap \sphere$.  By
Lemma~\ref{l:toricgrad}, $\frac{\nabla g_\alpha (x(n))}{\lVert
\nabla g_\alpha (x(n)) \rVert} = w(n) \to u$.  By
Lemma~\ref{l:limDirs}, the direction $u$ lies in the outer normal
cone of $\cube$ at~$x^*$.  Therefore the outer normal cone intersects
$\HPerp$ nontrivially, so by Lemma~\ref{l:oncint}, the point $x^*$
does not lie in $\partial \ol\invtPoly$.  Thus the
intersection~\eqref{eq:ext_B_intersection} is empty.
\end{proof}

By Theorem~\ref{t:birchExt1}, when $w \in \HPerp$ the toric ray
$\Toric_\alpha(w)$ either intersects $\ol\invtPoly$ at the Birch point
or does not approach $\ol\invtPoly$, including its boundary.  The
following theorem considers toric rays where $w$ lies more generally
in a neighborhood of~$\HPerp$ and states that these toric rays do not
approach $\ol\invtPoly$ outside a compact neighborhood of the Birch
point.


\begin{theorem}[Perturbed Birch's theorem]\label{thm:extBirch}
Fix a linear subspace $H\subseteq\bR^m$ and vectors $\alpha,p
\in\bR^m_{>0}$.  Let $q$ be the $\alpha$-Birch point in the polyhedron
$\invtPoly = (p+H)\cap\bR^m_{\geq 0}$, namely the unique point in the
intersection $\relIntP \cap \Toric_\alpha(H^\perp)$.  For every
relatively open neighborhood $\Oset$ of $q$ in the relative interior
of $\invtPoly$, the set $H^\perp \cap S^{m-1}$ of unit directions
along $H^\perp$ has a relatively open neighborhood $N$ in $S^{m-1}$
such that
$$%
  (\ol\invtPoly \setminus \Oset) \cap \ol{\Toric_\alpha(N)} =
  \emptyset,
$$
where the two closures are taken in the compactification
$[0,\infty]^m$.
\end{theorem}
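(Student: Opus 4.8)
The plan is to argue by contradiction using a compactness argument on $S^{m-1}$, reducing the perturbed statement to the already-established ``Birch's theorem at infinity'' (Theorem~\ref{t:birchExt1}) together with the usual Birch's theorem (Theorem~\ref{t:birch}). Suppose no such neighborhood $N$ exists. Then for every $n \in \Zplus$ we may choose a direction $w(n) \in S^{m-1}$ within distance $1/n$ of $H^\perp \cap S^{m-1}$ and a scalar $\theta(n) \in [1,\infty]$ such that the point $x(n) = \alpha * \theta(n)^{w(n)}$ (interpreted in the compactification $[0,\infty]^m$, allowing $\theta(n) = \infty$) lies in $\ol\invtPoly \setminus \Oset$. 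Since $S^{m-1}$ is compact, pass to a subsequence so that $w(n) \to u$; by construction $u \in H^\perp \cap S^{m-1}$. Since $[0,\infty]^m$ is compact, pass to a further subsequence so that $x(n) \to x^*$ for some $x^* \in [0,\infty]^m$. Because $\ol\invtPoly$ is closed in the compactification and $\Oset$ is relatively open in $\relIntP$, the limit point $x^*$ lies in $\ol\invtPoly$, and moreover $x^* \notin \Oset$ (this last point needs a small argument: $\Oset$ is open in $\relIntP$, so its complement in $\ol\invtPoly$ is closed in $\ol\invtPoly$, hence contains the limit).

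The crux is then to identify $x^*$. I expect to show that $x^* \in \ol{\Toric_\alpha(H^\perp)}$, the closure of the \emph{unperturbed} toric pencil along $H^\perp$. Once this is done, Theorem~\ref{t:birchExt1} forces $x^* = q$, the $\alpha$-Birch point, contradicting $x^* \notin \Oset$ (since $q \in \Oset$). To see $x^* \in \ol{\Toric_\alpha(H^\perp)}$, I would split into the case where $\theta(n)$ stays in a bounded subinterval of $[1,\infty)$ and the case where $\theta(n) \to \infty$ (along a further subsequence; these exhaust the possibilities). In the bounded case, $\theta(n) \to \theta^* \in [1,\infty)$ after passing to a subsequence, and then $x(n) = \alpha * \theta(n)^{w(n)} \to \alpha * (\theta^*)^{u} \in \Toric_\alpha(H^\perp)$ by continuity of $(\theta, w) \mapsto \alpha * \theta^w$ on $[1,\infty) \times S^{m-1}$, so $x^*$ actually lies in $\Toric_\alpha(H^\perp)$ itself. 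In the case $\theta(n) \to \infty$, I want to say $x^* \in \ol{\Toric_\alpha(H^\perp)}$; the natural route is to compare $x(n) = \alpha * \theta(n)^{w(n)}$ with the nearby genuine point $\alpha * \theta(n)^{u}$ on the unperturbed toric ray in direction $u$, and argue that their images in $[0,\infty]^m$ have the same limit.

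The main obstacle is exactly this last comparison: when $\theta(n) \to \infty$ and $w(n) \to u$ but $w(n) \neq u$, the difference $\theta(n)^{w(n)_i} - \theta(n)^{u_i}$ can itself blow up or vanish in uncontrolled ways, so continuity on $[1,\infty]\times S^{m-1}$ of the map into $[0,\infty]^m$ is \emph{false} in general (the map is not even well-defined continuously at $\theta = \infty$ for directions with mixed-sign coordinates). The right fix, I believe, is to work in logarithmic (energy) coordinates and use Lemma~\ref{l:limDirs}: since $x(n) \to x^*$ with $x^* \in \partial[0,\infty]^m$ (the interior case being handled by Theorem~\ref{t:birch} directly, as $\Oset$ contains the unique interior intersection point), and since the normalized gradient $\nabla g_\alpha(x(n))/\lVert \nabla g_\alpha(x(n))\rVert$ equals $w(n) \to u$ by Lemma~\ref{l:toricgrad} when $\theta(n)\in(1,\infty)$, Lemma~\ref{l:limDirs} tells us $u$ lies in the outer normal cone $C_{x^*}[0,\infty]^m$. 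But $u \in H^\perp$, so by Lemma~\ref{l:oncint} we get $x^* \notin \partial\ol\invtPoly$ — so $x^*$ cannot be a boundary point of $\ol\invtPoly$ at all, and hence must be the interior Birch point $q$, again contradicting $x^* \notin \Oset$. This last path in fact bypasses the need to identify $x^*$ with a point of the unperturbed pencil entirely, so I would organize the final proof around it: the only case requiring care is $\theta(n)\in(1,\infty)$ with $\theta(n)\to\infty$, the cases $\theta(n)$ bounded or eventually equal to $1$ being immediate from continuity and Theorem~\ref{t:birch}.
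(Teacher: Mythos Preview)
Your argument is correct, and the ``fix'' you arrive at in the last paragraph is the right way to organize it. The paper's proof is different in packaging: rather than extracting a single sequence and analyzing its limit, the paper takes the nested family of closed sets $\mathcal{Q}_n = (\ol\invtPoly \setminus \Oset) \cap \ol{\Toric_\alpha(N_n)}$ for $N_n$ the $1/n$-neighborhood of $H^\perp \cap S^{m-1}$, asserts that $\bigcap_n \mathcal{Q}_n = (\ol\invtPoly \setminus \Oset) \cap \ol{\Toric_\alpha(H^\perp)}$, invokes Theorem~\ref{t:birchExt1} to see this is empty, and then uses the finite intersection property in the compact space $[0,\infty]^m$ to conclude some $\mathcal{Q}_n$ is already empty.

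What you do differently is open the black box: instead of reducing to Theorem~\ref{t:birchExt1}, you rerun its proof (Lemmas~\ref{l:toricgrad}, \ref{l:limDirs}, \ref{l:oncint}) with the perturbed directions $w(n)$, observing that those lemmas only need the \emph{limit} $u$ to lie in $H^\perp$, not the individual $w(n)$. This buys you something: the paper's identification $\bigcap_n \ol{\Toric_\alpha(N_n)} = \ol{\Toric_\alpha(H^\perp)}$ is exactly the continuity issue you correctly flag as the main obstacle (the map $(\theta,w) \mapsto \alpha*\theta^w$ does not extend continuously to $\theta=\infty$), and the paper leaves it implicit. Your route sidesteps this entirely. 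One small cleanup: rather than allowing $\theta(n)=\infty$, choose $x(n) \in \Toric_\alpha(N_n)$ within distance $1/n$ (in a metric on $[0,\infty]^m$) of a point of $(\ol\invtPoly\setminus\Oset)\cap\ol{\Toric_\alpha(N_n)}$; then $\theta(n)\in[1,\infty)$ throughout and the limit $x^*$ still lands in the closed set $\ol\invtPoly\setminus\Oset$.
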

\begin{proof}
For positive integers $n$, let $\left\{ N_n \right\}$ denote a
shrinking sequence of $\varepsilon_n$-neighborhoods of $H^\perp \cap
S^{m-1}$ in $S^{m-1}$ with radius $\varepsilon_n >0$ tending to 0.
(For instance, take $\varepsilon_n = 1/n$.)  It suffices to show that
the set
$$%
  \mathcal{Q}_n
  =
  (\ol\invtPoly \setminus \Oset) \cap \ol{\Toric_\alpha(N_n)}
$$
is empty for some~$n$.  The intersection of the sets $\mathcal{Q}_n$
for all~$n$ is $(\ol\invtPoly \setminus \Oset) \cap
\ol{\Toric_\alpha(H)}$, which is empty by Theorem~\ref{t:birchExt1}.
Since the sets $\mathcal{Q}_n$ are nested closed subsets of the
compact space $[0,\infty]^m$, a standard theorem of topology (see
Theorem~26.9 and the ensuing discussion in the book by
Munkres~\cite{Munkres}) implies that $\mathcal{Q}_n$ is empty for
large~$n$.
\end{proof}

Theorem~\ref{thm:extBirch} implies that if a toric ray
$\Toric_\alpha(w)$ intersects $\invtPoly$ outside of a neighborhood of
the Birch point $q$, then $w$ lies outside a neighborhood of $\HPerp$,
so the $\HPerp$-component of $w$ is not too dominant.  We quantify
this via a lower bound on the $H$-component.
For notation, every vector $w \in \bR^m$ is uniquely expressible as
the sum $w_H + w_{\HPerp}$ of a vector $w_H \in H$ and a vector
$w_{\HPerp} \in \HPerp$.


\begin{corollary}\label{cor:bddStoicSubComponent}
Fix a linear subspace $H\subseteq\bR^m$ and $\alpha,p \in\bR^m_{>0}$.
For every open subset $\Oset$ of the interior of the polyhedron
$\invtPoly = (p+H)\cap\bR^m_{\geq 0}$ such that $\Oset$ contains the
$\alpha$-Birch point, there exists
$\mu > 0$ such that
$\lVert w_H \rVert \geq \mu$ for all unit vectors $w$ satisfying
$\alpha * \theta^w \in \relIntP \setminus \Oset$ for some
\mbox{$\theta > 1$}.
\end{corollary}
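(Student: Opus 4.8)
The plan is to deduce Corollary~\ref{cor:bddStoicSubComponent} from the Perturbed Birch's theorem (Theorem~\ref{thm:extBirch}) by a compactness/contradiction argument on the unit sphere $S^{m-1}$. First I would set up the contrapositive: suppose no such $\mu > 0$ exists. Then for every $n \in \Zplus$ there is a unit vector $w(n) \in S^{m-1}$ with $\lVert w(n)_H \rVert < 1/n$ and a parameter $\theta(n) > 1$ with $\alpha * \theta(n)^{w(n)} \in \relIntP \setminus \Oset$. Since $\lVert w(n)_H \rVert \to 0$ and $w(n)$ is a unit vector, the component $w(n)_{\HPerp}$ has norm tending to $1$; by compactness of $S^{m-1}$, after passing to a subsequence we may assume $w(n) \to u$ for some unit vector $u$, and then $u_H = 0$, i.e. $u \in \HPerp \cap S^{m-1}$.

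Next I would invoke Theorem~\ref{thm:extBirch} with the given neighborhood $\Oset$ of the Birch point $q$: it produces a relatively open neighborhood $N$ of $\HPerp \cap S^{m-1}$ in $S^{m-1}$ such that $(\ol\invtPoly \setminus \Oset) \cap \ol{\Toric_\alpha(N)} = \emptyset$. Since $u \in \HPerp \cap S^{m-1} \subseteq N$ and $N$ is open, for all large $n$ we have $w(n) \in N$, hence the toric ray $\Toric_\alpha(w(n))$ lies in $\Toric_\alpha(N)$. But the point $\alpha * \theta(n)^{w(n)}$ lies on this toric ray \emph{and} in $\ol\invtPoly \setminus \Oset$ (indeed in $\relIntP \setminus \Oset$), contradicting emptiness of that intersection. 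This contradiction establishes the claim.

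I do not expect a serious obstacle here; the work has been front-loaded into Theorem~\ref{thm:extBirch}. The only points requiring minor care are: (i) confirming that $\lVert w(n)_H \rVert \to 0$ together with $\lVert w(n) \rVert = 1$ forces any subsequential limit $u$ to satisfy $u_H = 0$ --- this is immediate from continuity of the orthogonal projections $w \mapsto w_H$ and $w \mapsto w_{\HPerp}$ and the Pythagorean identity $\lVert w \rVert^2 = \lVert w_H \rVert^2 + \lVert w_{\HPerp}\rVert^2$; and (ii) making sure that ``eventually $w(n) \in N$'' is legitimate, which follows because $N$ is relatively open in $S^{m-1}$ and contains the limit point $u$. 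If one prefers to avoid passing to a subsequence, one can instead argue directly: the distance from $w(n)$ to the closed set $\HPerp \cap S^{m-1}$ is at most a function of $\lVert w(n)_H\rVert$ that tends to $0$, so $w(n)$ eventually enters any fixed neighborhood $N$ of $\HPerp \cap S^{m-1}$; this phrasing dispenses with the compactness extraction entirely and is perhaps cleaner.
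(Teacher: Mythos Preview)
Your proposal is correct and uses essentially the same idea as the paper: both hinge on the neighborhood $N$ of $\HPerp \cap S^{m-1}$ supplied by Theorem~\ref{thm:extBirch}, and both deduce that any unit $w$ with $\alpha * \theta^w \in \relIntP \setminus \Oset$ must lie outside~$N$, forcing $\lVert w_H\rVert$ to be bounded below. The only cosmetic difference is that the paper argues directly---defining $\mu = \inf\{\lVert w_H\rVert : w \in S^{m-1}\setminus N\}$ and checking $\mu>0$ by compactness---whereas you phrase the same content as a contradiction via a sequence $w(n)$; your closing remark about dispensing with the subsequence is precisely the paper's direct argument.
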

\begin{proof}
By the proof of Theorem~\ref{thm:extBirch}, there exists $\varepsilon
> 0$ such that the $\varepsilon$-neighborhood $N$ of $H^\perp \cap
S^{m-1}$ in $S^{m-1}$ satisfies $(\ol\invtPoly \setminus \Oset) \cap
\ol{\Toric_\alpha(N)} = \emptyset$.
Now define
$$%
  \mu = \inf\big\{\lVert w_H \rVert \,\big|\, w \in S^{m-1} \setminus N\big\}.
$$
Then $\mu > 0$ because it is the infimum of a nonnegative continuous
function on a compact set that never takes the value~$0$ because
$w_H = 0 \implies w \in \HPerp \subseteq N$.  If
$\alpha * \theta^w \in \relIntP \setminus \Oset$,
then $w \notin N$ by
construction of~$N$, so $\lVert w_H \rVert \geq \mu$ by definition
of~$\mu$.
\end{proof}


\begin{remark}
Corollary~\ref{cor:bddStoicSubComponent} can be extended so that it
also considers the case when the closure in $[0,\infty ]^m$ of a toric
ray intersects $\ol \invtPoly \setminus \Oset$.  However, we do
not need this extension in the following section.
\end{remark}

\begin{remark}\label{rmk:MR}
M\"uller and Regensburger gave an extension of Birch's
theorem~\cite[Proposition 3.9]{MR} that is different from those
presented here (Theorems~\ref{t:birchExt1} and~\ref{thm:extBirch}).
Their result was used to generalize results about complex-balanced
systems to the setting of certain generalized mass-action ODE systems.
\end{remark}

\section{Jets}\label{s:jet}

This section introduces the technology of jets to relate the
combinatorial geometry of reaction diagrams to the dynamics.  Using
this connection, we prove that for strongly endotactic reaction
networks, the ``draining'' reactions---those that pose a threat to
persistence---are dominated by ``sustaining'' reactions
(Proposition~\ref{p:domination}).


\subsection{Jet frames, unit jets, and toric jets}\label{subsec:jet}


Recall the meaning of $\theta^w = \theta^{w_1} \cdots \theta^{w_n}$
from Notation~\ref{not:products}, where $w \in \bR^n$ and $\theta \in
\bR_{>0}$.  Recall also the geometric interpretation, from
Remarks~\ref{rmk:ray} and~\ref{rmk:polar_c},
of $w$ and $\log\theta$ together comprising polar coordinates
on~$\bR^n$.


The following notations are standard.
\begin{notation}\label{not:bigO}
The phrase ``for large $i$'' is shorthand for
``for all~$i$ greater than some fixed constant~$i_0$''.  For two
sequences $(f(i))$ and $(g(i))$ taking values in $\bR$,
\begin{enumerate}
\item%
$f(i) = O(g(i))$ if there exists $c>0$ such that $| f(i) | \leq c
|g(i)|$ for large $i$,
\item%
$f(i) = \Omega(g(i))$ if there exists $c>0$ such that $|f(i)| \geq c
|g(i)|$ for large $i$, and
\item%
$f(i) = \Theta (g(i))$ if there exist $c_1,c_2>0$ such that $c_1
|g(i)| \leq |f(i)| \leq c_2 |g(i)|$ for large $i$.
\end{enumerate}
Following standard practice, we allow the above notations to appear in
expressions and inequalities.  For instance, $f(i) \geq g(i) +
O(h(i))$ means that $f(i) \geq g(i) + k(i)$ for some $k(i) = O(h(i))$.
\end{notation}

Our next definition introduces 
\emph{jets} and \emph{toric jets}.
We first fix an orthonormal basis, a \emph{frame}.  Intuitively,
a jet consists of a sequence of points that, to first order, are
pointing in the direction of the first basis vector, with a small
perturbation in the second direction, with an even smaller
perturbation in the third direction, and so on.  A toric jet is
obtained by coordinate-wise exponentiation of a jet.

Toric jets serve as test sequences that allow us to deduce
persistence.  To prove that a certain function is Lyapunov-like, it is
not enough to analyze the function along one direction; perturbations
around that direction require exploration, as well.  It turns out that
the perturbations allowed in toric jets are general enough, while at
the same time the gradient of the function $x \log x - x$ along a
toric jet is easy to analyze; see the proof of
Theorem~\ref{thm:LyFncWorks_cpc}.


\begin{definition}\label{d:jet}
Let $n \in \ZZ_{>0}$.
\begin{enumerate}
\item%
A \emph{frame} is a list $\ol w = (w_1, \ldots, w_\ell)$ of mutually
orthogonal unit vectors in $\bR^n$.
\item%
A \emph{jet} is a sequence $\unitJet_{i \in \ZZ_{>0}}$ of vectors in
$\bR^n$ in the positive span of some frame $\ol w$ in $\bR^n$ (i.e.,
$\<w(i), w_j\> > 0$ for all $i$ and~$j$) such that for $j = 1, \ldots,
\ell-1$, the limit $\lim\limits_{i \to \infty} \frac{\< w(i),
w_j\>}{\<w(i),w_{j+1}\>}$ exists and equals $+\infty$.  A \emph{unit
jet} is a jet that consists of unit vectors.
\item\label{d:toricjet}%
A \emph{toric jet} is a sequence $\toricJet_{i \in \ZZ_{> 0}}$, where
$\unitJet$ is a unit jet in $\bR^n$ and $(\theta(i))$ is a sequence in
$\bR_{>1}$ with $\lim\limits_{i \to \infty} \thetaI = +\infty$.
\end{enumerate}
The \emph{jet frame} for the jet $\unitJet$ or toric jet $\toricJet$
is $\ol w$, and these jets are \emph{framed by}~$\ol w$.
\end{definition}


\begin{remark}\label{rmk:testSeq}
Toric jets have a geometric interpretation: the sequence
\begin{align}\label{eq:seq_toric}
  \big(\log \thetaN\big) \wN
  =
  (\log \thetaN) \big(\bCoef{1}{n} w_1 + \cdots + \bCoef{\ell}{n} w_\ell\big)
\end{align}
is to first approximation a sequence going to infinity in direction
$w_1$ in $\bR^n$, which is viewed as ``energy space''; the
second-order correction is in direction~$w_2$, and so on.
Exponentiating the sequence~\eqref{eq:seq_toric} yields the toric jet
$\toricJet$ in ``concentration space''~$\Rplus^n$.

In the context of a reaction network, if the image of a toric jet lies
in a given invariant polyhedron~$\invtPoly$, then the toric jet
necessarily approaches the boundary of $\invtPoly$ or is unbounded.
More precisely, $\toricJet$ approaches the boundary of the closure
$\ol\invtPoly$ of $\invtPoly$ in the compactification $[0,
\infty]^\SS$.  Our approach to proving persistence is to demonstrate
that no toric jet is a sequence of points along a trajectory (see the
proof of Theorem~\ref{thm:LyFncWorks_cpc}).  This strategy is a
distillation of the argument employed by Anderson in
\cite{Anderson11}.
\end{remark}

\begin{remark}
The concepts of jet frame and unit jet were introduced by Miller and
Pak in~\cite[Definition 4.1]{MP} for the purpose of describing the
interaction of an expanding wavefront on the boundary of a convex
polytope infinitesimally after the wavefront hits a new face.  Our
definitions are related but not identical to theirs.
\end{remark}

\begin{example}\label{ex:toric ray}
Every sequence of points $\theta(i)^w$ along a toric ray with
$\theta(i) \to \infty$ is a special case of a toric jet.  Here the jet
frame is the singleton list containing only the direction $w$ of the
toric ray.
\end{example}

We now give two lemmas concerning unit jets.


\begin{lemma}\label{l:w1}
If $\unitJet$ is a unit jet with frame $\ol w = (w_1,\ldots,w_\ell)$,
then $\wI \to w_1$ as $i \to \infty$.
\end{lemma}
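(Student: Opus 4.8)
The statement is that if $(w(i))$ is a unit jet with frame $\ol w = (w_1, \ldots, w_\ell)$, then $w(i) \to w_1$ as $i \to \infty$. The plan is to expand each $w(i)$ in the orthonormal basis given by the frame, write $w(i) = \sum_{j=1}^\ell \beta_j(i) w_j$ with $\beta_j(i) = \langle w(i), w_j \rangle > 0$, and show that $\beta_1(i) \to 1$ while $\beta_j(i) \to 0$ for all $j \geq 2$. Since $\ol w$ is an orthonormal list and $w(i)$ lies in its span, unitarity of $w(i)$ gives $\sum_{j=1}^\ell \beta_j(i)^2 = 1$ for every $i$, so it suffices to prove $\beta_j(i)/\beta_1(i) \to 0$ for each $j \geq 2$: then each $\beta_j(i) \to 0$ for $j \geq 2$ (as $\beta_1(i) \leq 1$), hence $\beta_1(i)^2 = 1 - \sum_{j \geq 2}\beta_j(i)^2 \to 1$, and since $\beta_1(i) > 0$ we get $\beta_1(i) \to 1$; combining, $w(i) = \beta_1(i) w_1 + \sum_{j\geq 2}\beta_j(i)w_j \to w_1$.

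To get $\beta_j(i)/\beta_1(i) \to 0$ for a fixed $j \in \{2, \ldots, \ell\}$, I would use the defining property of a jet: for each $k = 1, \ldots, \ell-1$, the ratio $\beta_k(i)/\beta_{k+1}(i) \to +\infty$. Equivalently $\beta_{k+1}(i)/\beta_k(i) \to 0$. Then write
$$
  \frac{\beta_j(i)}{\beta_1(i)}
  = \prod_{k=1}^{j-1} \frac{\beta_{k+1}(i)}{\beta_k(i)},
$$
a finite product (of length $j-1$) of sequences each tending to $0$, hence tending to $0$. (One should note all factors are positive and well-defined since every $\beta_k(i) > 0$ by the positive-span condition in the definition of a jet.) This handles every $j \geq 2$, and the argument of the previous paragraph then finishes the proof.

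I do not expect any serious obstacle here; the lemma is essentially bookkeeping built directly into the definition of a jet. The only mild subtlety worth stating carefully is the reduction: one must invoke $\sum_j \beta_j(i)^2 = 1$ (valid because $w(i)$ is a unit vector lying in the span of an orthonormal set) to pass from "$\beta_j(i)/\beta_1(i) \to 0$ for $j \geq 2$" to "$\beta_1(i) \to 1$". There is also the trivial edge case $\ell = 1$, where $w(i) = \beta_1(i) w_1$ with $\beta_1(i) = 1$ automatically, so $w(i) = w_1$ for all $i$ and the conclusion is immediate; the general argument subsumes this since the product defining $\beta_j(i)/\beta_1(i)$ is empty only when $j=1$.
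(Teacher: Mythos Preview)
Your proof is correct and follows essentially the same approach as the paper: expand $w(i)$ in the orthonormal frame, use the ratio condition from the definition of jet together with the unit-norm constraint to conclude $\beta_1(i)\to 1$ and $\beta_j(i)\to 0$ for $j\geq 2$. Your version simply fills in more detail (the telescoping product and the explicit use of $\sum_j\beta_j(i)^2=1$) where the paper just writes ``it follows that.''
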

\begin{proof}
Let $w(i) = \sum_{j=1}^\ell \beta_j(i) w_j$, where $\bCoef{j}{i} > 0$.
By definition, $\frac{\beta_j(i)}{\beta_{j+1}(i)} \to \infty$ for $j =
1,\ldots,\ell-1$.  As every $w(i)$ and $w_j$ is a unit vector, it
follows that $\beta_1(i) \to 1$, and all other $\beta_j(i)\to 0$ (for
$j = 2,\ldots,\ell$).
\end{proof}

\begin{lemma}[Unit jets are abundant]\label{l:jet}
Let $n\in \ZZ_{>0}$.  Every infinite sequence of unit vectors in
$\bR^n$ has an infinite subsequence that is a unit jet.
\end{lemma}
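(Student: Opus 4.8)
The plan is to prove the statement by induction on the number $\ell$ of vectors in the frame, building the subsequence one ``level'' at a time via iterated extraction of convergent subsequences on the compact sphere $S^{n-1}$. The base case $\ell = 1$ amounts to: every sequence of unit vectors has a convergent subsequence (with limit $w_1 := \lim$), which is immediate by sequential compactness of $S^{n-1}$; a sequence converging to a unit vector $w_1$ is trivially a unit jet with one-element frame $(w_1)$ once we note that $\<w(i), w_1\> \to 1 > 0$, so after discarding finitely many terms all inner products are positive. The real content is the inductive step, which constructs the higher-order corrections.

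For the inductive step, suppose the claim holds in all dimensions for frames of length less than $\ell$, or more precisely set up the induction on $n$ together with a greedy extraction. Given an infinite sequence $(u(i))$ of unit vectors in $\bR^n$: first pass to a subsequence so that $u(i) \to w_1 \in S^{n-1}$. Write each $u(i) = \beta_1(i) w_1 + r(i)$ where $r(i) = u(i) - \beta_1(i) w_1$ with $\beta_1(i) = \<u(i), w_1\>$, so $r(i) \perp w_1$ and $r(i) \to 0$. If $r(i) = 0$ for infinitely many $i$, restrict to those indices and we are done with a length-$1$ frame. Otherwise, restrict to the (infinitely many) indices with $r(i) \neq 0$ and consider the normalized residuals $\hat r(i) := r(i)/\lVert r(i)\rVert$, a sequence of unit vectors in the hyperplane $w_1^\perp \cong \bR^{n-1}$. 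By the inductive hypothesis applied in $\bR^{n-1}$, there is a subsequence along which $(\hat r(i))$ is a unit jet, framed by some $(w_2, \ldots, w_\ell)$ of mutually orthogonal unit vectors in $w_1^\perp$. Along that subsequence, define the frame $\ol w = (w_1, w_2, \ldots, w_\ell)$; these are mutually orthogonal unit vectors in $\bR^n$. The coefficients of $u(i)$ in this frame are $\beta_1(i) = \<u(i),w_1\>$ and $\beta_j(i) = \lVert r(i)\rVert \, \<\hat r(i), w_j\>$ for $j \geq 2$. Now $\beta_1(i) \to 1$ while $\beta_j(i) \to 0$ for $j \geq 2$ (since $\lVert r(i)\rVert \to 0$ and $\<\hat r(i),w_j\>$ is bounded), so $\beta_1(i)/\beta_2(i) \to \infty$; and for $j = 2, \ldots, \ell - 1$ the ratio $\beta_j(i)/\beta_{j+1}(i) = \<\hat r(i),w_j\>/\<\hat r(i),w_{j+1}\> \to \infty$ by the inductive hypothesis (the $\lVert r(i)\rVert$ factors cancel). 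All coefficients are positive for large $i$ (the $j\geq 2$ ones because $(\hat r(i))$ is a jet with positive frame coefficients, $\beta_1(i)$ because it tends to $1$), so after discarding finitely many terms we have a genuine unit jet framed by $\ol w$.

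The main obstacle, and the reason the argument needs care rather than being a one-line compactness remark, is the bookkeeping around the residual vectors $r(i)$: one must handle the dichotomy of whether $r(i)$ vanishes (giving a shorter frame, which is fine since a frame may have any length $\ell \leq n$), must check that dividing by $\lVert r(i)\rVert$ is legitimate on the retained index set, and must verify that the ratio conditions survive the normalization — the key point being that the common scalar $\lVert r(i)\rVert$ cancels in every ratio $\beta_j(i)/\beta_{j+1}(i)$ for $j \geq 2$, while the first ratio $\beta_1(i)/\beta_2(i)$ blows up for the independent reason that $\lVert r(i)\rVert \to 0$. Note also that the recursion terminates because the ambient dimension strictly drops at each stage (from $n$ to $n-1$ to $\dots$), so the frame length $\ell$ is at most $n$ and no infinite regress occurs; this is what makes the induction on $n$ the natural organizing principle.
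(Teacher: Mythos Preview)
Your argument is correct and matches the paper's proof essentially step for step: pass to a convergent subsequence to get $w_1$, split off the component along $w_1$, handle the dichotomy of vanishing vs.\ nonvanishing residual, normalize the residual and apply the inductive hypothesis in $w_1^\perp \cong \bR^{n-1}$, then verify that the $\lVert r(i)\rVert$ factor cancels in the higher ratios while $\beta_1(i)/\beta_2(i) \to \infty$ because $\beta_1(i)\to 1$ and $\lVert r(i)\rVert\to 0$. The only expository wobble is your opening framing of the induction as being on the frame length~$\ell$; as you yourself note by the end, the induction is really on the ambient dimension~$n$, and the paper sets it up that way from the start.
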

\begin{proof}
Let $\unitJet$ be an infinite sequence of unit vectors in $\bR^n$.
Throughout this proof, subsequences of $\unitJet$ are denoted again by
$\unitJet$ for ease of notation.  We proceed by induction on $n$.

For $n=1$, if the sequence $\unitJet$ takes the value $+1$ infinitely
often, then choose $w_1$ to be $+1$.  Otherwise $w(i)$ takes the value
$-1$ infinitely often, so choose $w_1$ to be $-1$.  The required unit
jet is the constant subsequence $(w(i) = w_1)$ with frame $\ol w =
(w_1)$.

Now assume that $n \geq 2$.
As the sequence $w(i)$ lies in the unit sphere, which is compact, the
sequence must have an accumulation point, which we denote by $w_1$.
Restricting to a subsequence if necessary, assume that $\lim\limits_{i
\to \infty} w(i) = w_1$.

Let $\bCoef{1}{i} = \<w(i), w_1\>$.
Then $\lim\limits_{i \to \infty}\bCoef{1}{i} = \< w_1, w_1 \> =1$.
Take a subsequence with $\bCoef{1}{i} >0$ for all $i$.  Consider the
sequence $(w'(i) = w(i) - \bCoef{1}{i} w_1 )$.

\noindent
\textbf{Case~1.}  For large $i$, the sequence $(w'(i))$ is the zero
vector.  Then for large $i$, $w(i)=\bCoef{1}{i}w_1$, so this
subsequence is a unit jet with frame $\ol w = (w_1)$.

\noindent
\textbf{Case~2.}  The sequence $(w'(i))$ is nonzero infinitely often.
Then restrict to a subsequence that is always nonzero.  By induction,
the sequence of unit vectors
$w'(i)/||w'(i)||$ in $w_1^\perp \cong \bR^{n -1}$ has a subsequence
with frame $(w_2,\ldots,w_\ell) \subseteq w_1^\perp$ and corresponding
positive coefficients~$\widetilde\beta_j(i)$ for $j = 2,\ldots,\ell$
such that
$w'(i)/||w'(i)|| = \sum_{j=2}^\ell\widetilde\beta_j(i)w_j$ is a unit
jet.

For $j = 2,\ldots,\ell$, let $\bCoef{j}{i}=\widetilde\beta_j(i) \cdot
||w'(i)||$.  We claim that the corresponding subsequence defined by
$w(i) = \bCoef{1}{i} w_1 + \sum_{j=2}^\ell \bCoef{j}{i}w_j$ is a unit
jet framed by $(w_1, \ldots, w_\ell)$.  We need only check that the
first coefficient dominates the second.  To see that it does, note
that $\bCoef{2}{i} = \widetilde\beta_2(i) \cdot \lVert w'(i) \rVert$
converges to~$0$, because $\widetilde\beta_2(i) \to 1$ by definition
of jet frame and $||w'(i)||\to 0 $ by definition of $w_1$ and
$\beta_1$.  Hence $\bCoef{1}{i} / \bCoef{2}{i} \to \infty$ because
$\bCoef{1}{i} \to 1$.
\end{proof}

\subsection{Geometry of jets}\label{subsec:geo_jets}

Theorem~\ref{l:vj} in this subsection conveys an important geometric
idea behind jets.  Later we use this result to prove a useful
characterization of endotactic networks (Lemma~\ref{l:endjets}).  We
begin with some preliminaries.


\begin{definition}\label{d:super}
Fix a positive integer $n \in \Zplus$, a finite set $Q \subseteq
\bR^n$, and a list $\ol w = (w_1,\ldots,w_\ell)$ of vectors
in~$\bR^n$.  Set $\TT_0(Q,\ol w) = Q$, and inductively for $j =
1,\ldots,\ell$ let $\TT_j(Q,\ol w)$ denote the set of
$\leq_{w_j}$-maximal elements of $\TT_{j-1}(Q,\ol w)$.
\end{definition}

\begin{notation}
When the set $Q$ and the list $\ol w$ of vectors is clear from
context, we write $\TT_j$ to denote $\TT_j(Q,\ol w)$.
\end{notation}


\begin{lemma}\label{l:mmlrat}
Fix a positive integer $n \in \Zplus$, a frame $\ol w =
(w_1,\ldots,w_\ell)$ in $\bR^n$, a unit jet $(w(i) = \sum_{j=1}^\ell
\beta_j(i) w_j)$ framed by~$\ol w$, and a finite set $Q \subseteq
\bR^n$.  Let $\lambda\in \{1,\ldots,\ell\}$.
\begin{enumerate}
\item\label{l:mmlrat.1}%
If $x\in \TT_\lambda$ and $y \notin \TT_\lambda$, then there
exists $k \in \{1,\ldots,\lambda\}$ such that $\<w_k, x-y\> >0$ and
$\<w_j, x-y\> = 0$ for all $j = 1,\ldots,k-1$.  Consequently, $\<w(i),
x-y\> > 0$ for large~$i$, and $\<w(i), x-y\> = \Omega(\beta_k(i))$.
\item\label{l:mmlrat.2}%
If $\lambda > 1$ and $x \in \TT_{\lambda-1}$ and $y \in Q$, then for
large~$i$,
\begin{align}\label{eq:start_at_lam}
  \<w(i),x-y\> \geq \sum_{j=\lambda}^\ell \beta_j(i) \<w_j, x-y\>.
\end{align}
\end{enumerate}
\end{lemma}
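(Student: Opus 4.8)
The plan is to handle the two parts in sequence, with part~\ref{l:mmlrat.1} doing most of the work. For part~\ref{l:mmlrat.1}, let $x \in \TT_\lambda$ and $y \notin \TT_\lambda$. Since $y$ fails to survive to $\TT_\lambda$, it must get eliminated at some stage: there is a smallest index $k \in \{1,\ldots,\lambda\}$ such that $y \notin \TT_k$. Then $y \in \TT_{k-1}$ (taking $\TT_0 = Q$), and since $x \in \TT_\lambda \subseteq \TT_{j}$ for all $j \le \lambda$, in particular $x \in \TT_{j}$ for $j = 0,\ldots,\lambda$. For $j < k$ both $x$ and $y$ lie in $\TT_{j-1}$ and both survive passage to $\TT_j$, so both are $\leq_{w_j}$-maximal in $\TT_{j-1}$; hence $\<w_j, x\> = \<w_j, y\>$, i.e.\ $\<w_j, x-y\> = 0$ for $j = 1,\ldots,k-1$. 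At stage $k$, $x \in \TT_k$ is $\leq_{w_k}$-maximal in $\TT_{k-1}$ while $y \in \TT_{k-1} \setminus \TT_k$ is not, so $\<w_k, x\> > \<w_k, y\>$, i.e.\ $\<w_k, x-y\> > 0$. This establishes the first assertion. For the consequences, write $\<w(i), x-y\> = \sum_{j=1}^\ell \beta_j(i)\<w_j, x-y\> = \sum_{j=k}^\ell \beta_j(i)\<w_j, x-y\>$ using the vanishing of the first $k-1$ terms. The leading term is $\beta_k(i)\<w_k, x-y\>$ with $\<w_k,x-y\> > 0$ a fixed positive constant, and by the defining property of a jet each later coefficient satisfies $\beta_j(i) = o(\beta_k(i))$ for $j > k$; hence the sum is $\beta_k(i)\<w_k,x-y\> + o(\beta_k(i))$, which is positive for large $i$ and is $\Omega(\beta_k(i))$.

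For part~\ref{l:mmlrat.2}, assume $\lambda > 1$, $x \in \TT_{\lambda-1}$, and $y \in Q$. I would split the sum $\<w(i),x-y\> = \sum_{j=1}^\ell \beta_j(i)\<w_j,x-y\>$ into the low-index part $j = 1,\ldots,\lambda-1$ and the high-index part $j = \lambda,\ldots,\ell$, and show the low-index part is nonnegative term by term. Fix $j \le \lambda - 1$. Since $x \in \TT_{\lambda-1} \subseteq \TT_{j}$, in particular $x$ survived passage from $\TT_{j-1}$ to $\TT_j$, so $x$ is $\leq_{w_j}$-maximal in $\TT_{j-1}$. Now $y \in Q = \TT_0$; if $y$ lies in $\TT_{j-1}$ then $\<w_j, x\> \ge \<w_j, y\>$ directly from maximality of $x$. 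If instead $y \notin \TT_{j-1}$, pick the smallest $k \le j-1 < j \le \lambda-1$ with $y \notin \TT_k$; since $x \in \TT_{\lambda-1} \subseteq \TT_{\lambda-1}$ and also $x \in \TT_{k}$, the argument of part~\ref{l:mmlrat.1} (applied with $\lambda$ there replaced by $\lambda - 1 \ge j - 1 \ge k$ and noting $x \in \TT_{\lambda-1}$, $y \notin \TT_{\lambda-1}$ in case $y$ is already out by stage $\lambda-1$, or more simply re-running the elimination-index argument) gives $\<w_{k'}, x-y\> = 0$ for $k' < k$ and $\<w_k, x-y\> > 0$; so $y$'s value under $\leq_{w_1},\ldots,\leq_{w_{k-1}}$ agrees with $x$'s and then $x$ strictly beats $y$ at $w_k$, which is consistent with — and in fact implies — that at stage $j$ the relevant comparison $\<w_j, x-y\>$ need not be controlled, so I instead argue more carefully: the cleanest route is to observe that for each $j \le \lambda-1$ the $\leq_{w_j}$-maximal value on $\TT_{j-1}$ is attained by $x$, and every $y \in Q$ has $\<w_j, y\>$ no larger than the maximum over $\TT_{j-1}$ once we know $y$'s values at $w_1,\ldots,w_{j-1}$ tie those of the survivors — i.e.\ I would prove by induction on $j$ that for $y \in Q$, either $\<w_j, x - y\> \ge 0$, or there is $k < j$ with $\<w_k, x-y\> > 0$ and $\<w_{k'}, x-y\> = 0$ for $k' < k$. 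Granting that dichotomy, group the terms: for the indices $j$ where $\<w_j,x-y\>\ge 0$ the contribution $\beta_j(i)\<w_j,x-y\>$ is $\ge 0$; and if some index has the strict-positive-at-$k$ alternative, then as in part~\ref{l:mmlrat.1} the total low-index contribution is $\beta_k(i)\<w_k,x-y\> + o(\beta_k(i)) \ge 0$ for large $i$, dominating any negative $\beta_j(i)\<w_j,x-y\>$ with $j > k$, $j \le \lambda-1$. Either way $\sum_{j=1}^{\lambda-1}\beta_j(i)\<w_j,x-y\> \ge 0$ for large $i$, and adding the high-index part gives~\eqref{eq:start_at_lam}.

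The main obstacle is the bookkeeping in part~\ref{l:mmlrat.2}: unlike part~\ref{l:mmlrat.1}, here $y$ is an arbitrary element of $Q$ rather than an element excluded at a known stage, so a single index $k$ need not control everything, and one must argue that the first index at which $x$ strictly beats $y$ produces a $\beta_k(i)$-order positive term that swamps all later (possibly negative) lower-order terms up to index $\lambda - 1$. The right way to organize this is the induction on $j$ sketched above establishing the dichotomy ``$\<w_j,x-y\>\ge 0$, or $x$ already strictly dominates $y$ at some earlier $w_k$ with ties before that,'' after which the jet growth hierarchy $\beta_1(i) \gg \beta_2(i) \gg \cdots$ makes the sign of the truncated sum $\sum_{j \le \lambda-1}\beta_j(i)\<w_j,x-y\>$ follow from its leading nonzero term. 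Once that is in hand, both parts are immediate from Notation~\ref{not:bigO} and Lemma~\ref{l:w1}.
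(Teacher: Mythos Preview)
Your proof of part~\ref{l:mmlrat.1} is essentially identical to the paper's.

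For part~\ref{l:mmlrat.2}, your argument is correct but unnecessarily convoluted. The per-index dichotomy and the induction on~$j$ are superfluous: the index $k$ in your second alternative is simply the smallest $k$ with $y\notin\TT_k$, which depends only on $y$, not on~$j$. So the right case split is just whether $y\in\TT_{\lambda-1}$ or not. If $y\in\TT_{\lambda-1}$, then $\<w_j,x-y\>=0$ for all $j\le\lambda-1$ and the low-index sum vanishes. If $y\notin\TT_{\lambda-1}$, then you are exactly in the situation of part~\ref{l:mmlrat.1} with $\lambda$ replaced by $\lambda-1$, and the leading-term estimate gives positivity of the low-index sum. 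The paper packages this second case slightly more elegantly: it observes that the normalized truncation
\[
v(i)=\frac{\sum_{j=1}^{\lambda-1}\beta_j(i)w_j}{\bigl\lVert\sum_{j=1}^{\lambda-1}\beta_j(i)w_j\bigr\rVert}
\]
is itself a unit jet framed by $(w_1,\ldots,w_{\lambda-1})$, writes the low-index sum as $\lVert\cdots\rVert\,\<v(i),x-y\>$, and invokes part~\ref{l:mmlrat.1} directly to get $\<v(i),x-y\>>0$ for large~$i$. This avoids re-deriving the $o(\beta_k(i))$ estimate and makes the logical dependence on part~\ref{l:mmlrat.1} explicit. Your approach and the paper's are the same in substance; the paper's is just tidier.
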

\begin{proof}
1.  Suppose $x \in \TT_\lambda$ and $y \notin \TT_\lambda$.  Let $k
\in \{1,\ldots,\lambda\}$ be the smallest positive integer such that
$y\notin \TT_k$.  It follows that $\< w_k,x-y\> > 0$ because $y \in
\TT_{k-1} \setminus \TT_k$ and $x \in \TT_k$.  Further, $\<w_j,x-y\> =
0$ for all $j < k$, since $x,y \in \TT_j$ for $j < k$.  Thus
$$%
  \<w(i),x-y\>
  \ =\
  \sum_{j=1}^{\ell} \beta_j(i) \<w_j,x-y\>
  \ =\
  \beta_k(i) \<w_k,x-y\> + O(\beta_{k+1}(i)),
$$
where $\beta_{\ell+1}(i)$ is understood to be~$0$.  Thus,
$\<w(i),x-y\> > 0$ for large $i$ because $\<w_k,x-y\> > 0$ and, by
definition of unit jet, $\beta_k(i) >0$ and $\beta_{k+1}(i)/\beta_k(i)
\to 0$ as $i \to \infty$.  Additionally, it follows that $\<w(i),x-y\>
= \Omega(\beta_k(i))$.

2.  Suppose $x \in \TT_{\lambda-1}$.  Set
$$%
  \phi(i) = \sum_{j=1}^{\lambda-1} \beta_j(i) \<w_j,x-y\>,
$$
and rewrite inequality~\eqref{eq:start_at_lam} as
$\phi(i) \geq 0$.
If $y\in \TT_{\lambda-1}$, then $\<w_j,x-y\>=0$ for $j =
1,\ldots,\lambda - 1$, so in fact $\phi(i) = 0$.
If $y \notin \TT_{\lambda-1}$, then consider the sequence defined by
$$%
  v(i) = \frac{\sum_{j=1}^{\lambda-1} \beta_j(i) w_j}{||
  \sum_{j=1}^{\lambda-1} \beta_j(i) w_j||},
$$
which is a unit jet framed by $(w_1,\ldots,w_{\lambda-1})$ by
construction, using the fact that $(w(i))$ is a unit jet.  Then
\begin{align}\label{eq:start_at_lam_3}
  \phi(i)
  =
\Big\|\sum_{j=1}^{\lambda-1} \beta_j(i) w_j \Big\| \<v(i), x-y\>.
\end{align}
Of the terms in~\eqref{eq:start_at_lam_3}, $\sum_{j=1}^{\lambda-1}
\beta_j(i) w_j$ approaches $w_1$ (by definition of unit jet), so its
norm $||\sum_{j=1}^{\lambda-1} \beta_j(i) w_j||$ approaches~$1$, and
$\<v(i), x-y\> > 0$ for large $i$ by part~1 of this lemma.  So
$\phi(i) > 0$ for large $i$, and inequality~\eqref{eq:start_at_lam}
holds.
\end{proof}

The next result and Corollary~\ref{cor:polytope} may be seen as
generalizations of Proposition~2.3(iii) in Ziegler's \emph{Lectures on
Polytopes}~\cite{Ziegler}; with more careful accounting, we obtain our
required stronger result.  An alternative, direct proof of
Corollary~\ref{cor:polytope} follows the same lines as the proof in
Ziegler's book, without appealing to jets.


\begin{theorem}[Fundamental theorem of jets]\label{l:vj}
Fix a positive integer $n\in\Zplus$, a frame $\ol w =
(w_1,\ldots,w_\ell)$ in~$\bR^n$, a unit jet $\big(v(i) =
\sum_{j=1}^\ell \beta_j(i) w_j\big)$ framed by~$\ol w$, and a finite
set $Q \subseteq \bR^n$.  Then for large $i$, the
$\leq_{v(i)}$-maximal subset of $Q$ equals $\TT_\ell$.
\end{theorem}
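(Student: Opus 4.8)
The plan is to show the two inclusions $\init{v(i)}{Q} \subseteq \TT_\ell$ and $\TT_\ell \subseteq \init{v(i)}{Q}$ hold for all large $i$, using Lemma~\ref{l:mmlrat} as the main tool. Note first that $\TT_\ell$ is nonempty: it is obtained by iteratively passing to $\leq_{w_j}$-maximal subsets of a nonempty finite set, each of which is again nonempty.

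For the inclusion $\TT_\ell \subseteq \init{v(i)}{Q}$: fix $x \in \TT_\ell$. For any $y \in Q$, either $y \in \TT_\ell$ or $y \notin \TT_\ell$. If $y \notin \TT_\ell$, then by Lemma~\ref{l:mmlrat}.\ref{l:mmlrat.1} (applied with $\lambda = \ell$) we have $\<v(i), x - y\> > 0$ for large $i$. If $y \in \TT_\ell$, I need $\<v(i), x - y\> = 0$ for large $i$; this follows because both $x$ and $y$ survive to $\TT_\ell$, so $\<w_j, x\> = \<w_j, y\>$ for every $j = 1, \ldots, \ell$ (at each stage $j$, all elements of $\TT_j$ attain the common maximal value of $\<w_j, \cdot\>$ over $\TT_{j-1}$), hence $\<w_j, x - y\> = 0$ for all $j$ and therefore $\<v(i), x - y\> = \sum_j \beta_j(i) \<w_j, x-y\> = 0$ identically. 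Since $Q$ is finite, there is a single threshold $i_0$ beyond which $\<v(i), x - y\> \geq 0$ for all $y \in Q$, so $x \in \init{v(i)}{Q}$; and since $\TT_\ell$ is finite, a common threshold works for all $x \in \TT_\ell$ simultaneously.

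For the reverse inclusion $\init{v(i)}{Q} \subseteq \TT_\ell$ for large $i$: suppose $y \notin \TT_\ell$ and pick any $x \in \TT_\ell$ (which exists by nonemptiness). By Lemma~\ref{l:mmlrat}.\ref{l:mmlrat.1} again, $\<v(i), x - y\> > 0$ for large $i$, so $\<v(i), y\> < \<v(i), x\>$, which means $y$ is not $\leq_{v(i)}$-maximal in $Q$, i.e.\ $y \notin \init{v(i)}{Q}$. Taking the contrapositive over all $y \in Q \setminus \TT_\ell$ and using finiteness of $Q$ to get a uniform threshold, we conclude $\init{v(i)}{Q} \subseteq \TT_\ell$ for large $i$.

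Combining the two inclusions, $\init{v(i)}{Q} = \TT_\ell$ for all large $i$. The main thing to be careful about is the uniformity in $i$: each pairwise comparison $\<v(i), x - y\> > 0$ holds only ``for large $i$'' with a threshold depending on the pair $(x,y)$, but since $Q$ (and hence $\TT_\ell$) is finite there are only finitely many such thresholds, so a single cutoff $i_0$ suffices for all comparisons at once. This is the only real obstacle, and it is routine; everything else reduces to the bookkeeping observation that membership in $\TT_j$ is equivalent to attaining the running sequence of maximal inner-product values, which is exactly what Lemma~\ref{l:mmlrat}.\ref{l:mmlrat.1} is built to exploit.
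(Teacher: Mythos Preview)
Your proof is correct, and it takes a noticeably more direct route than the paper's. The paper proceeds by induction on the length of the frame: it introduces truncated jets $v_j(i) = \beta_1(i) w_1 + \cdots + \beta_j(i) w_j$ and proves for each $j$ that the $\leq_{v_j(i)}$-maximal subset of~$Q$ equals~$\TT_j$, via a somewhat delicate construction of an auxiliary quantity~$\delta(i)$ controlling the gap between $\TT_j$ and~$Q \setminus \TT_j$. You instead apply Lemma~\ref{l:mmlrat}.\ref{l:mmlrat.1} directly with $\lambda = \ell$ to handle every pair $(x,y)$ with $x \in \TT_\ell$ and $y \notin \TT_\ell$ at once, and separately observe that any two elements of~$\TT_\ell$ agree on all the inner products $\<w_j,\cdot\>$, hence on $\<v(i),\cdot\>$. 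Finiteness of~$Q$ then gives a uniform cutoff, exactly as you note. This is shorter and conceptually cleaner; the paper's inductive argument yields as a byproduct the intermediate statements $\init{v_j(i)}{Q} = \TT_j$ for each~$j$, but those are not needed for the theorem as stated (and would in any case follow from your argument applied to each truncated frame separately).
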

\begin{proof}
The proof is by induction on~$\ell$.  More precisely, for
$j=1,\ldots,\ell$, let $(v_j(i))$ be~the jet
$$%
  v_j(i) = \beta_1(i) w_1 + \dots + \beta_j(i) w_j.
$$
We
prove that for all $j = 1,\ldots,\ell$, the $\leq_{v_j(i)}$-maximal
subset of~$Q$ equals $\TT_j$ for large $i$.  The base case of the
induction---that $\TT_1$ is the $\leq_{v_1(i)}$-maximal subset of~$Q$
for large~$i$---is true by definition of~$\TT_1$ because $v_1(i) \to
w_1$.

Fix $j < \ell$ and assume that the $\leq_{v_j(i)}$-maximal subset of
$Q$ equals $\TT_j$ for large $i$.  From now on, consider only such
sufficiently large $i$.  For $x\in\TT_j$, let $c{}(i) = \<v_j(i),x\>$
be the value of the inner product of $v_j(i)$ with every element in
$\TT_j$; informally, write $c{}(i) = \<v_j(i),\TT_j\>$.  Similarly,
let $c' \in \bR$ denote the value of the inner product of $w_{j+1}$
with every element in $\TT_{j+1}$; informally, write $c' = \<w_{j+1},
\TT_{j+1}\>$.  Next, define $\delta(i)$ to be the reciprocal of
left-hand side of the inequality
\begin{align}\label{eq:def_delta_i}
  \frac{1}{\delta(i)} =
  \max_{y\in Q\setminus\TT_j}
    \frac{\max\{0, \<w_{j+1}, y\> - c'\}}{c{}(i) - \<v_j(i),y\>}
  \leq
  \frac{\max_{y\in Q\setminus\TT_j} \{0, \<w_{j+1}, y\>-
  c'\} }{\min_{y\in Q\setminus\TT_j} (c{}(i) - \<v_j(i), y\>)} .
\end{align}
If $1/\delta(i) = 0$, then declare $\delta(i) = +\infty$.  Note that
$\delta(i) > 0$ for large~$i$, because the denominators
in~\eqref{eq:def_delta_i} are strictly positive by the inductive
hypothesis, and the numerators are nonnegative by construction.  The
numerator on the right-hand side of~\eqref{eq:def_delta_i} has no
dependence on $i$, and hence is $O(1)$.  Also, using the identity
$\<v_j(i), x\> = c{}(i)$ for $x\in\TT_j$, the denominator on the
right-hand side of~\eqref{eq:def_delta_i} satisfies
\begin{align*}
  \min_{y \in Q\setminus\TT_j} \big(c{}(i) - \<v_j(i),y\>\big)
  =
  \min_{\substack{y \in Q\setminus\TT_j\\x \in \TT_j\\[-.25ex]}} \<v_j(i),x-y\>
  =
  \Omega\big(\beta_j(i)\big),
\end{align*}
where the second equality follows from
Lemma~\ref{l:mmlrat}.\ref{l:mmlrat.1}.  Thus the
inequality~\eqref{eq:def_delta_i} yields
$$%
\delta(i)
  \geq
  \frac{\min_{y\in Q\setminus\TT_j} \big(c{}(i) - \<v_j(i),y\>\big)}
  {\max_{y\in Q\setminus\TT_j}\{0,\<w_{j+1}, y\>-c'\}}
  =
  \frac{\Omega\big(\beta_j(i)\big)}{O(1)}
  =
  \Omega\big(\beta_j(i)\big).
$$
Hence, by the definition of jet, $\delta(i)>\beta_{j+1}(i)$ for large
$i$.

We now complete the proof by proving that for large~$i$, the following
inequality holds for all $y \in Q$ and achieves equality exactly on
the set~$\TT_{j+1}$:
\begin{align}\label{eq:inequality_j+1}
\<v_{j+1}(i), y\>
   = \<v_j(i), y\> + \beta_{j+1}(i) \<w_{j+1}, y\>
\leq c{}(i) + \beta_{j+1}(i) c'.
\end{align}
If $y \in \TT_j$, then $\<v_j(i),y\> = c{}(i)$
by definition and $\<w_{j+1}, y\> \leq c'$ by construction, with
equality holding precisely when $y \in \TT_{j+1}$; hence the desired
(in)equality in~\eqref{eq:inequality_j+1} holds.
Now assume that $y \in Q \setminus \TT_j$.  There are two subcases
based on the sign of $\<w_{j+1}, y\> - c'$.  First, if $\<w_{j+1}, y\>
- c' <0$, then the inequality in~\eqref{eq:inequality_j+1} is strict,
noting that $\<v_j(i), y\> \leq c{}(i)$ because $y \in Q$.
In the remaining case, when $y \in Q \setminus \TT_j$ and $\<w_{j+1},
y\> - c' \geq 0$, the construction of $\delta(i)$
in~\eqref{eq:def_delta_i} yields the first of the inequalities
\begin{align}\label{eq:ineq_case2b}
  c{}(i) - \<v_j(i), y\>
  \geq
  \delta(i) \big(\<w_{j+1}, y\> -c'\big)
  >
  \beta_{j+1}(i) \big(\<w_{j+1}, y\> -c'\big),
\end{align}
while the second inequality is due to the inequality $\delta(i) >
\beta_{j+1}(i)$ proven earlier.  Rearranging~\eqref{eq:ineq_case2b}
yields the desired strict inequality in~\eqref{eq:inequality_j+1}.
\end{proof}

\begin{corollary}\label{cor:polytope}
Fix a positive integer $n\in\Zplus$, a frame $\ol w =
(w_1,\ldots,w_\ell)$ in~$\bR^n$, and a finite set $Q \subseteq \bR^n$.
There exist $\beta_1,\ldots,\beta_\ell \in \Rplus$ and a positive
linear combination $\widetilde w = \beta_1 w_1 + \cdots + \beta_\ell
w_\ell$ such that the set of $\leq_{\widetilde w}$-maximal elements
of~$Q$ equals~$\TT_\ell$.  Additionally, if
$\gamma_1,\ldots,\gamma_\ell \in \Rplus$,
then $\beta_1,\ldots,\beta_\ell$ can be chosen so that $\beta_j <
\gamma_j \beta_{j-1}$ for all $j = 2,\dots,\ell$.
\end{corollary}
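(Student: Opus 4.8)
The plan is to read off the corollary from the Fundamental theorem of jets (Theorem~\ref{l:vj}) by exhibiting one very explicit unit jet whose coefficients decay geometrically. Fix a parameter $\varepsilon\in(0,1)$ and consider the positive linear combination
$\widetilde w(\varepsilon) = w_1 + \varepsilon w_2 + \varepsilon^2 w_3 + \cdots + \varepsilon^{\ell-1}w_\ell$,
with coefficients $\beta_j = \varepsilon^{\,j-1}$. Normalizing, set $v(\varepsilon) = \widetilde w(\varepsilon)/\lVert \widetilde w(\varepsilon)\rVert$. For any sequence $\varepsilon_i \to 0^+$ in $(0,1)$, the sequence $\big(v(\varepsilon_i)\big)$ is a unit jet framed by $\ol w$: since the $w_j$ are orthonormal, $\<v(\varepsilon_i),w_j\> = \varepsilon_i^{\,j-1}/\lVert\widetilde w(\varepsilon_i)\rVert > 0$ for all $i,j$, and $\<v(\varepsilon_i),w_j\>/\<v(\varepsilon_i),w_{j+1}\> = 1/\varepsilon_i \to +\infty$ for each $j=1,\ldots,\ell-1$. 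So this sequence satisfies all clauses of Definition~\ref{d:jet}.

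\textbf{First assertion.} Apply Theorem~\ref{l:vj} to this unit jet and to the given finite set $Q$: for all large $i$, the $\leq_{v(\varepsilon_i)}$-maximal subset of $Q$ equals $\TT_\ell = \TT_\ell(Q,\ol w)$. Since a positive rescaling of a vector does not change the preorder it induces, the $\leq_{\widetilde w(\varepsilon_i)}$-maximal subset of $Q$ equals $\TT_\ell$ for all large $i$ as well; equivalently, this holds for all sufficiently small $\varepsilon>0$. Fixing any such $\varepsilon$ and setting $\widetilde w = \widetilde w(\varepsilon)$ and $\beta_j = \varepsilon^{\,j-1}$ proves the first claim.

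\textbf{Refinement and the main obstacle.} For the strengthened statement, note that with the above construction $\beta_j/\beta_{j-1} = \varepsilon$ for every $j=2,\ldots,\ell$; hence, as soon as $\varepsilon < \min\{\gamma_2,\ldots,\gamma_\ell\}$, we get $\beta_j < \gamma_j\beta_{j-1}$ for all $j=2,\ldots,\ell$ simultaneously. We therefore only need to choose $\varepsilon>0$ small enough to satisfy both the (single) smallness threshold coming from Theorem~\ref{l:vj} and the bound $\varepsilon<\min_j\gamma_j$; such an $\varepsilon$ exists since these are finitely many ``eventually'' conditions. The only point requiring care—and the nearest thing to an obstacle—is the passage from the \emph{sequence} produced by Theorem~\ref{l:vj} (whose conclusion is only ``for large $i$'') to a \emph{single} vector $\widetilde w$; this is harmless precisely because a finite collection of eventual conditions is jointly satisfiable. (Alternatively, one can give the direct inductive argument along the lines of Ziegler's proof of Proposition~2.3(iii) in~\cite{Ziegler}, building $\widetilde w$ one frame vector at a time and choosing each new coefficient small relative to the previous one, but the jet-based route above is shorter given Theorem~\ref{l:vj}.)
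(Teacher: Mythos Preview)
Your proof is correct and follows essentially the same route as the paper's: apply Theorem~\ref{l:vj} to a unit jet framed by~$\ol w$ and then freeze a sufficiently late term of the sequence as~$\widetilde w$, using the definition of jet to enforce the ratio conditions $\beta_j < \gamma_j\beta_{j-1}$. Your choice of the explicit geometric jet $\beta_j = \varepsilon^{\,j-1}$ is a nice concretization that makes the second assertion immediate via $\beta_j/\beta_{j-1}=\varepsilon<\min_j\gamma_j$, whereas the paper simply invokes ``the definition of jet'' for this step.
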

\begin{proof}
Use Theorem~\ref{l:vj}: first consider a jet $(w(i))$ in the frame
$\ol w$, and then define $\widetilde{w}$ as $w(i_0)$ for a
sufficiently large value of~$i_0$, and use the definition of jet.
\end{proof}


\begin{remark}
The hypotheses of Corollary~\ref{cor:polytope} can be weakened.  The
corollary remains true for any set $Q$ whose convex hull is a polytope
and for any list of vectors $(w_1,\ldots,w_\ell)$ that need not be
orthonormal.  Indeed, this is the setting of the analogous result in
Ziegler's book~\cite[Proposition~2.3(iii)]{Ziegler}.  We do not need
this generality here.
\end{remark}

\begin{example}\label{ex:hexagon}
We illustrate Corollary~\ref{cor:polytope} by considering the set $Q$
consisting of the six vertices of a hexagon:
$$%
\begin{xy}<15mm,0cm>:
  (-.25,0)="labelQ" *{Q} ;
  (0, \sinpioverthree) ="123" *{\bullet};
  (.75 , \halfSinPiOverThree) ="132" *{\bullet} *+!DL{Q_2};
  (.75, -\halfSinPiOverThree) ="312" *{\bullet};
  (0, -\sinpioverthree)  ="321" *{\bullet};
  (-.75,-\halfSinPiOverThree) ="231" *{\bullet};
  (-.75, \halfSinPiOverThree) ="213" *{\bullet};
  (-3,-.5)="corner" *+!UL{}  *{}; 
  (-2,-.5) 	="cornerRight" *+!R{ }  *{}; 
  (-3,.5) 	="cornerUp" *+!R{ }  *{}; 
  (-2,.1) 	="cornerMiddle" *+!L{}  *{}; 
  (-2,1.23205) = "cornerDotted" *+!R{ }  *{};
     "123";"132" **@{-};
     "132";"312" **@{-}; 
     "312";"321" **@{-};
     "321";"231" **@{-};
     "231";"213" **@{-};
     "213";"123" **@{-};
  (0.75, 0) *+!L{Q_1};
  {\ar@{->}_{w_1}"corner";"cornerRight"  *{}  }; 	
  {\ar@{->}^{w_2}"corner";"cornerUp"  *{}  }; 	
  {\ar@{->}_{w_1+ \varepsilon w_2}"corner";"cornerMiddle"  *{}  }; 	
  {\ar@{.>}_{w_1 + \delta w_2}"corner";"cornerDotted" *{}  }; 	
  {\ar@{.>}^{}(-.25,-.5);(.75,1.23205)  *{}  };
\end{xy}
$$
The set $Q_1$ consists of the two vertices on the edge that is the
$\leq_{w_1}$-maximal face of the hexagon, and the vertex $Q_2$ is the
$\leq_{w_2}$-maximal subset of $Q_1$.  For $0<\varepsilon<\delta
=\sqrt{3}$, the vertex $Q_2$ is the $\leq_{w_1 + \varepsilon
w_2}$-maximal subset of $Q$.  In other words, $w_1 + \varepsilon w_2$
defines $Q_2$ as long as it lies between $w_1$ and the dotted arrow
depicted twice in the figure above.
\end{example}


\subsection{Jets and reactions}\label{sub:jets+reactions}

One of our primary
goals, carried out in Section~\ref{s:prelya}, is to prove that given
a strongly endotactic network, there is a compact set outside of which
the function $g(x) = \sum_{i\in\SS} x_i \log x_i - x_i$ decreases
along trajectories; that is, its derivative along trajectories is
negative (Theorem~\ref{thm:LyFncWorks_cpc}).  Intuitively, this
derivative is a sum of contributions (``pulls'';
Definition~\ref{d:pull}) from each reaction (see the proof of
Lemma~\ref{l:preL}).  A ``draining'' reaction acts to hinder our
efforts (Definition~\ref{d:level}) with a positive pull.  The pull
of a ``sustaining'' reaction is negative.
Proposition~\ref{p:domination} shows that, after fixing a toric jet,
each draining reaction is dominated by some sustaining reaction,
so negativity prevails overall and $g(x)$ indeed decreases.

\begin{definition}\label{d:level}
For a reaction network $\GG$, let $\ol w = (w_1,\ldots,w_\ell)$ be a frame
in~$\bR^\SS$.
\begin{enumerate}
\item%
A reaction $y \to y' \in \RR$ is \emph{$\ol w$-essential} if $\<w_j,
y'-y\> \neq 0$ for some $j = 1,\ldots,\ell$.
\item%
The \emph{level} of a $\ol w$-essential reaction $y \to y'$ is the
least $j$ such that $\<w_j, y'-y\> \neq 0$.
\item%
A $\ol w$-essential reaction $y \to y'$ with level $\lambda$ is
\begin{enumerate}
\item%
$\ol w$-\emph{sustaining} if $\<w_\lambda, y'-y\> < 0$.
\item%
$\ol w$-\emph{draining} if $\<w_\lambda, y'-y\> > 0$.
\end{enumerate}
\end{enumerate}
\end{definition}


\begin{remark}\label{rmk:reless}
For a reaction network $\GG$ and a nonzero vector $w$ in $\bR^\SS$, a
reaction $y \to y' \in \RR$ is $(w)$-essential
(Definition~\ref{d:level}.1) if and only if $y \to y'$ is
$w$-essential (Definition~\ref{d:partialOrder}.3).
More generally, for a frame $\ol w = (w_1,\ldots,w_\ell)$
in~$\bR^\SS$, a reaction $y \to y'$ is $\ol w$-essential if and only
if $y \to y'$ is $w_j$-essential for some $j = 1,\ldots,\ell$.
\end{remark}


\begin{example}\label{ex:goodBad}
For network $G$ of Figure~\ref{fig:end} in Section~\ref{s:CRNT}, let
$\ol w = (w_1,w_2)$ be a frame with $w_1=w$, where $w$ is depicted in
Figure~\ref{fig:end}.  Then reaction $y_2 \to y_2'$ is $\ol
w$-draining, and $y_3 \to y_3'$ is $\ol w$-sustaining.  Whether $y_1
\to y_1'$ is $\ol w$-\good or $\ol w$-\bad depends on whether $\<w_2,
y_1'-y_1\>$ is negative or positive.
\end{example}


\begin{definition}\label{d:pull}
Let $G=\GG$ be a reaction network.
\begin{enumerate}
\item%
The \emph{pull} of a reaction $y \to y'\in\RR$ along a toric jet
$\toricJet$ in $\bR^\SS$ is the function
\begin{align}\label{eq:pull}
  \pull\limits_{y \to y'}(i) = \<w(i), y'-y\>\theta(i)^{\<w(i), y\>}.
\end{align}
\item%
A unit jet $(w(i))$ in $\bR^\SS$ is \emph{adapted to $G$} if for
every reaction $y \to y'\in\RR$, either
\begin{enumerate}
\item%
$\<w(i), y'-y\>=0$ for all~$i$; or
\item%
$\<w(i), y'-y\><0$ for all~$i$,
in which case $y \to y'$ is \emph{sustaining along $(w(i))$}; or
\item%
 $\<w(i), y'-y\>>0$ for all~$i$,
in which case $y \to y'$ is \emph{draining along $(w(i))$}.
\end{enumerate}
A reaction $y \to y'$ is \emph{essential along $(w(i))$} if it is
sustaining or draining along $(w(i))$.
\item%
A toric jet $\toricJet$ framed by $(w_1, \ldots, w_\ell)$ in $\bR^\SS$
is \emph{adapted to $G$} if its unit jet $(w(i))$ is adapted to $G$
and, for all $j = 1,\ldots,\ell$, the limit $\lim\limits_{i\to\infty}
\theta(i)^{\<w(i), w_j\>}$ exists in~$[1,\infty]$.
\item%
A reaction $r\in\RR$ \emph{dominates} a reaction $r'\in\RR$
\emph{along} a toric jet $\toricJet$ in $\bR^\SS$ if the ratio of
their pulls along $\toricJet$ tends to infinity in absolute value:
$\lim\limits_{i \to \infty}
\Big|\frac{\pull\limits_r(i)}{\pull\limits_{r'}(i)}\Big|$ exists and
equals $+ \infty$.
\end{enumerate}
\end{definition}
\noindent
When the toric jet is clear from context, we simply say that reaction
$r$ \emph{dominates} reaction~$r'$.


\begin{lemma}\label{l:adapted}
Fix a reaction network $\GG$.  Let $\wI$ be a unit jet in~$\bR^\SS$ and
let $\toricJet$ be a toric jet in $\bR^\SS$.
\begin{enumerate}
\item%
There exists an infinite subsequence of $\wI$ that is adapted to~$\GG$.
\item%
There exists an infinite subsequence of $\toricJet$ that is adapted
to~$\GG$.
\end{enumerate}
\end{lemma}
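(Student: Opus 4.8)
The plan is to produce both subsequences by finitely many successive applications of pigeonhole and sequential compactness, using that $\RR$ is finite and that a frame $\ol w = (w_1,\ldots,w_\ell)$ has finite length. First I would record the two routine facts that make the subsequencing legitimate: an infinite subsequence of a unit jet is again a unit jet with the same frame (the terms stay unit vectors in the positive span of $\ol w$, and each defining limit $\<w(i),w_j\>/\<w(i),w_{j+1}\> \to +\infty$ survives passage to a subsequence), and an infinite subsequence of a toric jet is again a toric jet (additionally $\theta(i)\to+\infty$ along any subsequence).

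For part~1, enumerate the reactions $\RR = \{r_1,\ldots,r_N\}$ and build a decreasing chain of infinite index sets $\Zplus = I_0 \supseteq I_1 \supseteq \cdots \supseteq I_N$. Given $I_{k-1}$, partition it by the sign of $\<w(i),\flux{r_k}\>$; one of the three blocks $\{i\in I_{k-1} : \<w(i),\flux{r_k}\> > 0\}$, $\{i\in I_{k-1} : \<w(i),\flux{r_k}\> = 0\}$, $\{i\in I_{k-1} : \<w(i),\flux{r_k}\> < 0\}$ is infinite, and I take $I_k$ to be such a block. Along the subsequence indexed by $I_N$, the quantity $\<w(i),\flux{r}\>$ then has a single behaviour (strictly positive for all $i$, strictly negative for all $i$, or identically zero) for every reaction $r\in\RR$, which is precisely the requirement in Definition~\ref{d:pull}.2 for $(w(i))_{i\in I_N}$ to be adapted to~$\GG$; and it is still a unit jet by the first paragraph.

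For part~2, first apply part~1 to the unit jet $(w(i))$ underlying the given toric jet and restrict the toric jet to the resulting index set; now $(w(i))$ is adapted to $\GG$ and still framed by $(w_1,\ldots,w_\ell)$. It remains to pass to a further subsequence along which $\lim_i \theta(i)^{\<w(i),w_j\>}$ exists in $[1,\infty]$ for each $j = 1,\ldots,\ell$, which will give the second condition in Definition~\ref{d:pull}.3. Since $w(i)$ lies in the positive span of the frame, $\<w(i),w_j\> > 0$, and since $\theta(i) > 1$ this gives $\theta(i)^{\<w(i),w_j\>}\in(1,\infty) \subseteq [1,\infty]$ for every $i$. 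The space $[1,\infty]$ is compact and metrizable, so $\big(\theta(i)^{\<w(i),w_1\>}\big)_i$ has a subsequence converging in $[1,\infty]$; I pass to it, then repeat for $j = 2,\ldots,\ell$. After $\ell$ steps all the required limits exist, and further subsequencing disturbs neither adaptedness of the unit jet nor the jet-frame structure, so the final subsequence of $\toricJet$ is adapted to~$\GG$.

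I expect no genuine obstacle here: this is a diagonal/compactness bookkeeping argument. The only points needing care are verifying that a subsequence of a (toric) jet is still a (toric) jet with the same frame, and noticing that $\<w(i),w_j\> > 0$ makes each $\theta(i)^{\<w(i),w_j\>}$ a legitimate element of $(1,\infty)$, so that compactness of $[1,\infty]$ can be applied to the sequences $\big(\theta(i)^{\<w(i),w_j\>}\big)_i$.
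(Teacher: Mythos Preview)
Your proposal is correct and follows essentially the same approach as the paper: successive pigeonhole over the finitely many reactions to fix signs, then sequential compactness of $[1,\infty]$ to force each $\theta(i)^{\<w(i),w_j\>}$ to converge. You are simply more explicit about the bookkeeping (in particular, that subsequences of jets remain jets with the same frame), which the paper leaves implicit.
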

\begin{proof}
Repeatedly take subsequences, first so that for each reaction $y \to
y' \in \RR$, the sign of $\<w(i), y'-y\>$ is constant for all~$i$, and
then so that for all~$j = 1,\ldots,\ell$, an accumulation point of
$\thetaN^{\<w(i), w_j\>}$ becomes a limit point.  We can do this
because $\thetaN > 1$ and $\<w(i), w_j\> >0$ for all~$i$, whence
$\thetaN^{\<w(i), w_j\>}$ lies in the compact set $[1,\infty]$.
\end{proof}


It is equivalent for a reaction to be essential along a unit jet or
with respect to its jet frame, if the unit jet is adapted to the
relevant network.  Here is a more precise statement.

\begin{proposition}\label{p:relrn}
Let $G = \GG$ be a reaction network, and let $\unitJet$ be a unit jet
adapted to~$G$ and framed by $\ol w = (w_1,\ldots,w_\ell)$.  Let
$\lambda\in\{1,\ldots,\ell\}$.
\begin{enumerate}
\item\label{relrn1}%
A reaction is $\ol w$-sustaining (respectively, $\ol w$-draining) if
and only if it is sustaining (respectively, draining) along $(w(i))$.
\item%
A $\ol w$-essential reaction $y \to y'\in\RR$ has level $\lambda$ if
and only if $|\<w(i), y' - y\>| = \Theta\big(\<w(i),
w_\lambda\>\big)$.
\end{enumerate}
\end{proposition}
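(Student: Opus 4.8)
The plan is to reduce both parts to a single expansion of $\<w(i), y'-y\>$ in the orthonormal frame. Write the unit jet as $w(i) = \sum_{j=1}^\ell \beta_j(i)\, w_j$ with $\beta_j(i) = \<w(i), w_j\> > 0$, and for the reaction $y \to y'$ put $v = y'-y$. Orthonormality gives both $\<w(i), v\> = \sum_{j=1}^\ell \beta_j(i)\<w_j, v\>$ and $\<w(i), w_\lambda\> = \beta_\lambda(i)$, so part~2 is really a statement about the growth rate of $\beta_\lambda(i)$, while part~1 is a statement about the sign of $\<w(i), v\>$.

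The computational heart of the argument, which I would record first, is this. Suppose $y \to y'$ is $\ol w$-essential with level $\lambda$, so that $\<w_j, v\> = 0$ for $j < \lambda$ while $\<w_\lambda, v\> \neq 0$. Then
$$
  \<w(i), v\> = \beta_\lambda(i)\<w_\lambda, v\> + \sum_{j > \lambda} \beta_j(i)\<w_j, v\> = \beta_\lambda(i)\<w_\lambda, v\> + O\big(\beta_{\lambda+1}(i)\big),
$$
and since $\beta_{\lambda+1}(i)/\beta_\lambda(i) \to 0$ by the definition of a jet, $\<w(i), v\>/\beta_\lambda(i) \to \<w_\lambda, v\> \neq 0$. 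Hence $\<w(i), v\> = \Theta\big(\beta_\lambda(i)\big) = \Theta\big(\<w(i), w_\lambda\>\big)$, and the sign of $\<w(i), v\>$ agrees with the sign of $\<w_\lambda, v\>$ for large~$i$. This immediately gives the forward implication of part~2 (take the level to be $\lambda$) and one direction of each equivalence in part~1. For the converse in part~2, suppose $y \to y'$ is $\ol w$-essential with actual level $\mu$ but $|\<w(i), v\>| = \Theta\big(\<w(i), w_\lambda\>\big) = \Theta\big(\beta_\lambda(i)\big)$; combining this with $\<w(i), v\> = \Theta\big(\beta_\mu(i)\big)$ gives $\beta_\mu(i) = \Theta\big(\beta_\lambda(i)\big)$. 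But if $\mu \neq \lambda$, then $\beta_\mu(i)/\beta_\lambda(i)$ telescopes into a product of the jet-defining ratios and tends to $+\infty$ (if $\mu < \lambda$) or to $0$ (if $\mu > \lambda$), contradicting the $\Theta$-relation; hence $\mu = \lambda$.

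It remains to finish the converse in part~1, and this is the only place the hypothesis that $(w(i))$ is \emph{adapted} to $G$ is needed: adaptedness says the sign of $\<w(i), v\>$ is constant over all~$i$, which lets me promote the sign statement above from ``for large~$i$'' to ``for all~$i$''. Concretely: if $y \to y'$ is $\ol w$-sustaining with level $\lambda$, then $\<w_\lambda, v\> < 0$, so $\<w(i), v\> < 0$ for large~$i$ and hence, by adaptedness, for all~$i$; that is, $y \to y'$ is sustaining along $(w(i))$. Conversely, if $\<w(i), v\> < 0$ for all~$i$, then $v$ is not orthogonal to every $w_j$, so $y \to y'$ is $\ol w$-essential with some level $\lambda$, and $\<w(i), v\>/\beta_\lambda(i) \to \<w_\lambda, v\>$ forces $\<w_\lambda, v\> \leq 0$; since $\<w_\lambda, v\> \neq 0$ by definition of level, $\<w_\lambda, v\> < 0$, so $y \to y'$ is $\ol w$-sustaining. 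The draining case is verbatim with the inequalities reversed. I do not anticipate a genuine obstacle; the single subtle point is precisely this last promotion from large~$i$ to all~$i$, which without adaptedness could actually fail on an initial segment of the jet, and which the adaptedness hypothesis is there to handle.
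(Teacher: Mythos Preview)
Your proposal is correct and follows essentially the same approach as the paper: expand $\<w(i),y'-y\>$ in the frame, isolate the leading term $\beta_\lambda(i)\<w_\lambda,y'-y\>$, and read off both the sign and the $\Theta(\beta_\lambda(i))$ asymptotics. Your treatment is in fact slightly more explicit than the paper's in two places: you spell out the telescoping ratio argument for the converse of part~2 (the paper just asserts that $|\<w(i),y'-y\>| = \Theta(\beta_\lambda(i))$ forces $\<w_j,y'-y\>=0$ for $j<\lambda$ and $\<w_\lambda,y'-y\>\neq 0$), and you correctly flag that adaptedness is exactly what promotes ``$\<w(i),v\><0$ for large~$i$'' to ``for all~$i$'' in part~1, which the paper uses tacitly.
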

\begin{proof}
Let $w(i) = \sum_{j=1}^\ell \beta_j(i) w_j$.  Consider a reaction $y \to
y' \in\RR$.

If $y \to y'$ is not $\ol w$-essential then $\<w_j,y'-y\> = 0$ for
all~$j$, and in that case $\<w(i),y'-y\> = 0$ for all $i$, so $y \to
y'$ is not essential along $(w(i))$.

Next assume $y \to y'$ is $\ol w$-essential with level $\lambda$, so
$\<w_\lambda,y'-y\> = 0$ for all $j < \lambda$ whereas
$\<w_\lambda,y'-y\> \neq 0$, where the inequality is ``$<$'' in the
sustaining case and ``$>$'' in the draining case.  Thus $\<w(i),
y'-y\> = \beta_\lambda(i) \<w_\lambda, y'-y\> +
O(\beta_{\lambda+1}(i))$, so $\<w(i), y'-y\>$ is nonzero for large $i$
by the definition of unit jet---again, with $< 0$ in the sustaining
case and $> 0$ in the draining case.  Hence $y \to y'$ is essential
along $\unitJet$ if it is $\ol w$-sustaining and draining along
$\unitJet$ if it is $\ol w$-draining, and in either case $|\<w(i), y'
- y\>| = \Theta(\beta_\lambda(i))= \Theta(\<w(i), w_\lambda\>)$.  That
takes care of the ``$\implies$'' direction of part~2.

For the ``$\Leftarrow$'' direction of part 2, suppose that the
reaction $y \to y'$ is $\ol w$-essential and that $|\<w(i), y' - y\>|
=\Theta(\beta_\lambda(i))$.  It follows that $\<w_j, y' - y\> = 0$ for
all $j<\lambda$, and $\<w_\lambda, y' - y\> \neq 0$, so $y \to y'$ has
level~$\lambda$.
\end{proof}


\subsection{Jets and endotactic networks}\label{sub:jets+networks}

The next two results interpret endotactic and strongly endotactic
networks in terms of~jets.

\begin{notation}\label{not:super}
For a reaction network $\GG$ and jet frame $\ol w = \{w_1,\ldots,
w_\ell\}$, denote by $\TT_1$ the $\leq_{w_1}$-maximal subset of
$\source{\RR}$, and for $j\in\{2,\ldots,\ell\}$, write $\TT_j$ for the
$\leq_{w_j}$-maximal subset of $\TT_{j-1}$.
\end{notation}

\noindent
In other words, the sets $\TT_j$ coincide with those in
Section~\ref{subsec:geo_jets}, where $Q = \source\RR$.


\begin{lemma}\label{l:endjets}
Let $\GG$ be a reaction network.
\begin{enumerate}
\item%
$\GG$ is endotactic if and only if for every singleton frame $(w_1)$
in $\bR^\SS$ and for every $(w_1)$-draining reaction $y \to y'$ in
$\RR$, there exists a $(w_1)$-sustaining reaction $x \to x'$ such that
$\<w_1, x - y\> > 0$.
\item%
If $\GG$ is endotactic, then for every frame $\ol w$ in $\bR^\SS$ and
every $\ol w$-draining reaction $y \to y'$ in $\RR$ with level
$\lambda$, the reactant $y$ lies outside of\/ $\TT_\lambda$.
\end{enumerate}
\end{lemma}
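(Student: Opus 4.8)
The plan is to prove the two parts in sequence, with part 1 serving as the workhorse for part 2. For part 1, I would argue both directions using the translation between the combinatorial definition of endotactic (Definition~\ref{d:endotactic}) and the jet language. For the ``$\Leftarrow$'' direction: suppose the jet condition holds for every singleton frame; fix a nonzero $w_1 \in \bR^\SS$ and a $w_1$-essential reaction $r = (y\to y')$ with $y \in \supp{w_1}{\SS,\CC,\RR}$; I must show $\<w_1,\flux r\> < 0$. Note that $y \in \supp{w_1}{\SS,\CC,\RR} = \init{w_1}{\source{\RR_{w_1}}}$ means $y$ is $\leq_{w_1}$-maximal among reactants of $w_1$-essential reactions, so no $(w_1)$-sustaining or $(w_1)$-draining reaction $x\to x'$ can have $\<w_1, x - y\> > 0$. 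If $r$ were $(w_1)$-draining, the hypothesis would supply exactly such a forbidden sustaining reaction, a contradiction; hence $r$ is $(w_1)$-sustaining, i.e.\ $\<w_1,\flux r\> < 0$. For the ``$\Rightarrow$'' direction: assume $\GG$ is endotactic, fix a singleton frame $(w_1)$ and a $(w_1)$-draining reaction $y\to y'$; since this reaction is $w_1$-essential, its reactant $y$ lies below some element of $\supp{w_1}{\SS,\CC,\RR}$ (or is itself in the support), and any reactant $z \in \supp{w_1}{\SS,\CC,\RR}$ satisfies $\<w_1, z - y\> > 0$ unless $y$ is already maximal; but if $y$ itself were in $\supp{w_1}{\SS,\CC,\RR}$, then the endotactic condition would force $\<w_1,\flux{(y\to y')}\> < 0$, contradicting that the reaction is draining. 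So $y \notin \supp{w_1}{\SS,\CC,\RR}$, and picking any $z \in \supp{w_1}{\SS,\CC,\RR}$ together with the $w_1$-essential reaction $z\to z'$ it is the reactant of, endotacticity gives $\<w_1,\flux{(z\to z')}\> < 0$, i.e.\ $z\to z'$ is $(w_1)$-sustaining, and $\<w_1, z - y\> > 0$ as required.

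For part 2, the idea is to reduce to part 1 by passing to the level $\lambda$ of the draining reaction and working with the ``later'' part of the frame. Let $\ol w = (w_1,\ldots,w_\ell)$ be a frame, let $y\to y'$ be $\ol w$-draining with level $\lambda$, so $\<w_j, \flux{(y\to y')}\> = 0$ for $j < \lambda$ and $\<w_\lambda, \flux{(y\to y')}\> > 0$. Suppose for contradiction that $y \in \TT_\lambda$. Consider the singleton frame $(w_\lambda)$ applied not to all of $\source\RR$ but with the observation that $y\to y'$ is $(w_\lambda)$-draining (its level for this singleton frame is $1$ since $\<w_\lambda,\flux{}\> > 0$). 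By part 1 there is a $(w_\lambda)$-sustaining reaction $x\to x'$ with $\<w_\lambda, x - y\> > 0$. But $y \in \TT_\lambda$ means $y$ is $\leq_{w_\lambda}$-maximal within $\TT_{\lambda-1}$, so if $x \in \TT_{\lambda-1}$ we would get $\<w_\lambda, x - y\> \leq 0$, a contradiction. The remaining case is $x \notin \TT_{\lambda - 1}$; here I would descend: let $k \leq \lambda - 1$ be the least index with $x \notin \TT_k$, so $\<w_k, z - x\> > 0$ for every $z \in \TT_k$ (in particular for $y$, which lies in $\TT_\lambda \subseteq \TT_k$), giving $\<w_k, x - y\> < 0$, while $\<w_j, x - y\> = 0$ for $j < k$ since both $x,y \in \TT_j$ there. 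I then need to contradict endotacticity at some single direction; the natural move is to invoke Corollary~\ref{cor:polytope} to produce a single vector $\widetilde w = \sum \beta_j w_j$ (with the $\beta_j$ decaying fast enough, chosen relative to the finite set $Q = \source\RR \cup \{x\}$ or just $\source\RR$) whose $\leq_{\widetilde w}$-maximal set of reactants is $\TT_\ell$, and such that the signs of $\<\widetilde w, \flux{r}\>$ for the finitely many relevant reactions are governed by their leading nonzero $\<w_j,\flux r\>$. Against this $\widetilde w$, the reaction $y\to y'$ is $\widetilde w$-essential with $\<\widetilde w, \flux{(y\to y')}\>$ having the sign of $\<w_\lambda,\flux{}\> > 0$ (since lower levels vanish), i.e.\ it is $\widetilde w$-draining; and $y \in \TT_\lambda$ while its reactant status relative to $\supp{\widetilde w}{\SS,\CC,\RR}$ can be controlled because $\widetilde w$-maximality among reactants is dictated by the $\TT_j$ filtration. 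Applying part 1 with the singleton frame $(\widetilde w)$ then forces a $\widetilde w$-sustaining reaction with reactant strictly $\widetilde w$-above $y$, contradicting $y \in \TT_\lambda$ together with the fact that $\TT_\ell$ is the $\leq_{\widetilde w}$-maximal reactant set.

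I expect the main obstacle to be the bookkeeping in part 2: cleanly arranging the perturbation so that a single direction $\widetilde w$ simultaneously (a) sees $y\to y'$ as draining, (b) places $y$ at the correct height in the reactant ordering so that part 1 is applicable, and (c) does not accidentally change the essential/inessential status of the reactions in a way that breaks the argument. The cleanest route may well avoid Corollary~\ref{cor:polytope} entirely and instead argue directly with unit jets: take a unit jet $(v(i))$ framed by $\ol w$, use Theorem~\ref{l:vj} so that for large $i$ the $\leq_{v(i)}$-maximal reactants are exactly $\TT_\ell$, note $y\to y'$ is draining along $(v(i))$ with level $\lambda$ (its pull sign is eventually positive, governed by $\<w_\lambda,\flux{}\>$), apply part 1 to each singleton $(v(i))$ to get a sustaining reaction $x_i \to x_i'$ with $\<v(i), x_i - y\> > 0$, pass to a subsequence so that $(x_i\to x_i')$ is a fixed reaction $x\to x'$, and then derive the contradiction $y \notin \TT_\lambda$ from the structural consequence (via Lemma~\ref{l:mmlrat}.\ref{l:mmlrat.1}) that $\<v(i), x - y\> > 0$ for large $i$ forces $x$ to sit at least as high as $y$ in the $\TT_\bullet$ filtration up through level $\lambda$, incompatible with $y$ being $\leq_{w_\lambda}$-maximal in $\TT_{\lambda-1}$ while $x\to x'$ is $w_\lambda$-sustaining (so $x'$, not $x$, is pulled down). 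I would write the final proof using whichever of these two formulations turns out to have the shortest sign-chasing.
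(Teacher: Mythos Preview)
Your Part~1 is correct and equivalent to the paper's (which phrases it as a contrapositive but with identical content).

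For Part~2, your instinct to invoke Corollary~\ref{cor:polytope} is exactly the paper's route, but you make it harder than necessary by taking $\widetilde w$ as a combination of \emph{all} of $w_1,\ldots,w_\ell$. That choice makes $\TT_\ell$, not $\TT_\lambda$, the $\leq_{\widetilde w}$-maximal reactant set, and then your claimed contradiction (``$y \in \TT_\lambda$ together with $\TT_\ell$ being $\leq_{\widetilde w}$-maximal'') does not follow: since $\TT_\ell \subseteq \TT_\lambda$, a reactant strictly $\widetilde w$-above $y$ could perfectly well live in $\TT_\ell$ without contradicting $y \in \TT_\lambda \setminus \TT_\ell$.

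The fix --- and this is precisely what the paper does --- is to truncate the frame at level~$\lambda$: apply Corollary~\ref{cor:polytope} to $(w_1,\ldots,w_\lambda)$ to obtain $w^* = w_1 + \varepsilon_1 w_2 + \cdots + \varepsilon_{\lambda-1} w_\lambda$ whose $\leq_{w^*}$-maximal reactant set is $\TT_\lambda$ itself. Then there is no sign-chasing whatsoever: since $\<w_j, y'-y\> = 0$ for all $j < \lambda$, one has $\<w^*, y'-y\> = \varepsilon_{\lambda-1}\<w_\lambda, y'-y\> > 0$ automatically, so $y \to y'$ is $w^*$-draining. Endotacticity (applied directly, not via Part~1) forces $y \notin \supp{w^*}{\SS,\CC,\RR}$; but if $y \in \TT_\lambda$, then $y$ is $\leq_{w^*}$-maximal among all reactants, hence among reactants of $w^*$-essential reactions (it is the reactant of one), so $y \in \supp{w^*}{\SS,\CC,\RR}$, a contradiction. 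Your anticipated obstacles (a)--(c) and the entire unit-jet alternative evaporate once the frame is truncated at the right level. Your initial attempt via the singleton frame $(w_\lambda)$, by contrast, genuinely cannot be completed: Part~1 alone gives no control over where the sustaining reactant $x$ sits in the filtration $\TT_1 \supseteq \cdots \supseteq \TT_{\lambda-1}$.
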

\begin{proof}
1.\
The network $\GG$ fails to be endotactic if and only if there is a
unit vector~$w_1$ in~$\bR^\SS$ and a reaction $y \to y' \in \RR$ with
$y \in \op{supp}_{w_1}\GG$ such that $\<w_1, y'-y\> > 0$ (recall
Definition~\ref{d:endotactic}).  Equivalently, $\GG$ fails to be
endotactic precisely when there is a unit vector $w_1$ in $\bR^\SS$
and a $(w_1)$-draining reaction $y \to y' \in \RR$ such that every
$(w_1)$-sustaining reaction $x \to x'$ satisfies $y \leq_w x$.
Now use that $y \leq_w x \iff \<w_1, x-y\> \leq 0$.

2.\ Suppose $\GG$ is endotactic.  Consider a frame $\ol w =
(w_1,\ldots,w_\ell)$ in $\bR^\SS$.  Suppose $y \to y'$ is $\ol
w$-draining and has level~$\lambda$.
By Corollary~\ref{cor:polytope}, $\TT_\lambda$ is the
$\leq_{w^*}$-maximal subset of $\TT_0 = \source\RR$, where $w^* = w_1
+ \varepsilon_1 w_2 + \cdots + \varepsilon_{\lambda-1} w_\lambda$ for
some positive numbers $\varepsilon_1,\ldots,\varepsilon_{\lambda-1}$.
Corollary~\ref{cor:polytope} allows us to choose all of
$\varepsilon_1,\ldots,\varepsilon_{\lambda-1}$ to be arbitrarily small, so
we can ensure that $\<w^*,y'- y\>$ has the same sign as
$\<w_1,y'-y\>$, which is positive.  Then, from the definition of
$w^*$-endotactic, we conclude that $y \notin \TT_\lambda$.
\end{proof}

\begin{example}\label{ex:converse_endo_lem_false}
The converse of Lemma~\ref{l:endjets}.2 is false.  For the
1-dimensional network
\begin{align}\label{eq:net_converse}
\begin{xy}<14mm,0cm>:
  (1,0) 	="y1proj" *{\bullet}*+!R{{y_1} = {y_1}' ~ }  *{}; 
  (1,.02) 	="y1proj_2" *+!U{\circlearrowleft }  *{}; 
  (2.25,0) 	="y2proj" *+!R{{y_2}' }  *{}; 
  (3,0) 	="y2'proj" *{\bullet}*+!L{{y_2}}  *{}; 
  (5.5,0) 	="y4proj" *{\bullet}*+!L{{y_3}}  *{}; 
  (4.75,0) 	="y4'proj" *+!R{{y_3}'}  *{}; 
  {\ar "y2'proj";"y2proj"*{}  }; 		
  {\ar "y4proj";"y4'proj"*{}  };
  {\ar@{<--}^{w}(-1.75,0);(-.75,0)  *{}  }; 		
  (-.25,-.4) 	="Y1" *+!R{ }  *{}; 
  (6.4,-.4) 	="Y2" *+!R{ }  *{}; 
  (6.4, .3) 	="Y3" *+!UR{  }  *{}; 
  (-.25,.3) 	="Y4" *+!R{ }  *{}; 
  "Y1";"Y2" **\dir{.};
  "Y2";"Y3" **\dir{.};
  "Y3";"Y4" **\dir{.};
  "Y4";"Y1" **\dir{.};
\end{xy}
\end{align}
the essential reactions (if any) corresponding to both (singleton)
frames $(w)$ and $(-w)$ are sustaining.  More specifically, there are
no $(w)$-essential reactions (although $y_1$ is the $\leq_{w}$-maximal
reactant, the reaction $y_1 \to y_1'$ is not essential), and the
unique $(-w)$-essential reaction $y_3 \to y_3'$ is $(-w)$-sustaining.
However, network~\eqref{eq:net_converse} is not endotactic: $y_2 \to
y_2'$ is the leftmost reactant among all of the nontrivial reactions,
but it points to the left.
\end{example}


\begin{proposition}\label{p:stendjets}
For an endotactic reaction network $\GG$ with stoichiometric
subspace~$H$, the following are equivalent.
\begin{enumerate}
\item%
$\GG$ is strongly endotactic.
\item%
For every singleton jet frame $(w_1)$ in $\bR^\SS$ with $w_1 \notin
\HPerp$, there exists a $(w_1)$-sustaining reaction $x \to x'$ such
that $x \in \TT_1$.
\item%
For every frame $\ol w = (w_1,\ldots,w_\ell)$ in $\bR^\SS$ with
$w_1\notin H^\perp$, there exists a $\ol w$-sustaining reaction $x \to
x'$ with $x\in \TT_\ell$.
\end{enumerate}
\end{proposition}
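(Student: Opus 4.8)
The strategy is to obtain (1)$\iff$(2) by unwinding definitions, to note that (3)$\implies$(2) is merely the case $\ell=1$, and to derive (3) from (2) using Corollary~\ref{cor:polytope}.

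For (1)$\iff$(2): a singleton frame $(w_1)$ is just a unit vector, and $\TT_1$ (Notation~\ref{not:super}) is by definition $\init{w_1}{\source{\RR}}$. A reaction $x\to x'$ is $(w_1)$-sustaining precisely when $\<w_1,x'-x\><0$, a condition that already forces $(w_1)$-essentiality; and ``$w_1\notin\HPerp$'' is the same as ``$w_1$ not orthogonal to $H$.'' Since both the preorder $\leq_w$ and the sign of $\<w,\cdot\>$ are insensitive to positive rescaling of $w$, quantifying over unit vectors $w_1$ is equivalent to quantifying over all nonzero $w$. Hence, for an endotactic network, condition~(2) is verbatim the clause that, adjoined to ``endotactic,'' defines ``strongly endotactic'' in Definition~\ref{d:endotactic}.\ref{d:strong_endotactic}. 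And (3)$\implies$(2) is immediate: a singleton jet frame has $\ell=1$, so $\TT_\ell=\TT_1$ and the conclusion of~(3) is exactly that of~(2).

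The substance is (2)$\implies$(3). Fix a frame $\ol w=(w_1,\ldots,w_\ell)$ with $w_1\notin\HPerp$. By Corollary~\ref{cor:polytope} applied to $Q=\source{\RR}$, there is a positive combination $\widetilde w=\beta_1 w_1+\cdots+\beta_\ell w_\ell$ whose $\leq_{\widetilde w}$-maximal subset of $\source{\RR}$ equals $\TT_\ell$, and the successive ratios $\beta_j/\beta_{j-1}$ may be taken as small as we wish. I would choose them small enough so that two things hold at once: (a) $\widetilde w\notin\HPerp$ --- using $w_1\notin\HPerp$, fix $h\in H$ with $\<w_1,h\>\ne0$ and note that $\<\widetilde w,h\>$ is dominated by its leading term $\beta_1\<w_1,h\>$; and (b) for each of the \emph{finitely many} $\ol w$-essential reactions $y\to y'$ --- of level $\mu$, say --- the sign of $\<\widetilde w,y'-y\>=\sum_{j\ge\mu}\beta_j\<w_j,y'-y\>$ agrees with the sign of $\<w_\mu,y'-y\>$, because the level-$\mu$ term $\beta_\mu\<w_\mu,y'-y\>$ dominates. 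Each of (a) and (b) is a finite list of strict inequalities, so one choice of sufficiently small ratios secures them simultaneously; getting this order of quantifiers right --- fix the ratios, hence $\widetilde w$, \emph{before} invoking~(2) --- is the one delicate point.

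Now let $\widehat w=\widetilde w/\lVert\widetilde w\rVert$; this is a unit vector with $\widehat w\notin\HPerp$, and the preorder $\leq_{\widehat w}$ coincides with $\leq_{\widetilde w}$, so $\init{\widehat w}{\source{\RR}}=\TT_\ell$. Applying~(2) to the singleton frame $(\widehat w)$ produces a reaction $x\to x'$ with $\<\widehat w,x'-x\><0$ and reactant $x\in\init{\widehat w}{\source{\RR}}=\TT_\ell$. Since $\<\widetilde w,x'-x\>\ne0$, the reaction $x\to x'$ is $\ol w$-essential; letting $\mu$ be its level, property~(b) forces $\<w_\mu,x'-x\><0$, so $x\to x'$ is $\ol w$-sustaining with reactant in $\TT_\ell$, which is~(3). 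The only real obstacle is the quantifier bookkeeping in (2)$\implies$(3) just described; the remainder is definition-chasing together with the ready-made Corollary~\ref{cor:polytope}.
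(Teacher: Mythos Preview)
Your proof is correct and takes a genuinely different route from the paper's. The paper proves $(1)\Rightarrow(3)$ by passing through the full jet machinery: it constructs a unit jet framed by~$\ol w$, extracts an adapted subsequence via Lemma~\ref{l:adapted}, applies the fundamental theorem of jets (Theorem~\ref{l:vj}) to locate an index~$i_0$ with $\init{w(i_0)}{\source\RR}=\TT_\ell$ and $w(i_0)\notin\HPerp$, invokes strong endotacticity at the single vector $w(i_0)$, and then uses Proposition~\ref{p:relrn}.\ref{relrn1} to translate ``sustaining along the adapted jet'' into ``$\ol w$-sustaining.'' You instead bypass jets entirely: Corollary~\ref{cor:polytope} hands you a single vector~$\widetilde w$ with $\init{\widetilde w}{\source\RR}=\TT_\ell$, and your finite sign-matching condition~(b) replaces the appeal to Proposition~\ref{p:relrn}. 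The paper itself remarks, immediately after its proof, that the proposition ``can be proven directly, using only Corollary~\ref{cor:polytope}''---you have supplied precisely that argument. Your approach is more elementary and self-contained; the paper's approach rehearses the jet-adaptation formalism that is needed again in the proof of Proposition~\ref{p:domination}.
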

\begin{proof}
The equivalence of items~1 and~2 is straightforward from the
definition of strongly endotactic, and item~2 is a special case of
item~3.  We therefore assume item~1, with the goal of deducing item~3.
Let $\ol w =(w_1,\ldots,w_\ell)$ be a frame in $\bR^\SS$ with $w_1
\notin \HPerp$.  Let $(w(i))$ be a unit jet framed by $\ol w$.  Use
Lemma~\ref{l:adapted}.1 to pick a subsequence of $(w(i))$ adapted to
$\GG$.  Using the fundamental theorem of jets (Theorem~\ref{l:vj}),
take $i_0$ large enough so that the $\leq_{w(i_0)}$-maximal subset of
$\source \RR$ equals $\TT_\ell$.  Taking $i_0$ even larger, if
necessary, assume that $w(i_0) \notin H^\perp$, which is possible
because $w(i) \to w_1\notin H^\perp$.  $\GG$ is strongly
$w(i_0)$-endotactic, so there exists a $w(i_0)$-sustaining reaction $x
\to x'$ with $x\in\TT_\ell$.  Since $(w(i))$ is a unit jet adapted to
$\GG$, by definition $x \to x'$ is sustaining along the unit jet
$(w(i))$.  Proposition~\ref{p:relrn}.\ref{relrn1} implies that $x \to
x'$ is $\ol w$-sustaining.
\end{proof}


\begin{remark}
Proposition~\ref{p:stendjets} can be proven directly, using only
Corollary~\ref{cor:polytope}.
\end{remark}

Here is the main result of this section.


\begin{proposition}\label{p:domination}
Fix a strongly endotactic reaction network $G = \GG$ with
stoichiometric subspace~$H$.  Let $\toricJet$ be a toric jet adapted
to~$G$ framed by $\ol w = (w_1,\ldots,w_\ell)$ in $\bR^\SS$.  If $w_1
\notin \HPerp$ then every draining reaction along $(w(i))$ is
dominated by a sustaining reaction along~$(w(i))$.
\end{proposition}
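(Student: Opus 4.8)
The plan is to fix a draining reaction $y \to y'$ along $(w(i))$, say with level $\lambda$ (in the sense of Definition~\ref{d:level}, with respect to the jet frame $\ol w$), and to produce a sustaining reaction that dominates it. First I would invoke Lemma~\ref{l:endjets}.2: since $G$ is endotactic and $y \to y'$ is $\ol w$-draining with level $\lambda$, the reactant $y$ lies outside $\TT_\lambda$. Now I would apply Proposition~\ref{p:stendjets}.3 to the frame $\ol w$: since $G$ is strongly endotactic and $w_1 \notin \HPerp$, there exists a $\ol w$-sustaining reaction $x \to x'$ with $x \in \TT_\ell \subseteq \TT_\lambda$. This $x \to x'$ will be my candidate dominating reaction. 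By Proposition~\ref{p:relrn}.\ref{relrn1}, since the unit jet is adapted to $G$, the reaction $x \to x'$ is sustaining along $(w(i))$, and $y \to y'$ is draining along $(w(i))$, so their pulls have opposite sign along the jet; it remains to compare magnitudes.

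The heart of the argument is estimating the ratio of the two pulls. Writing $\pull_{y\to y'}(i) = \<w(i), y'-y\>\,\theta(i)^{\<w(i),y\>}$ and similarly for $x \to x'$, I split the comparison into the inner-product factors and the monomial factors. For the inner-product factors: $y \to y'$ has level $\lambda$, so by Proposition~\ref{p:relrn}.2, $|\<w(i), y'-y\>| = \Theta(\<w(i), w_\lambda\>)$; letting $\mu$ be the level of $x \to x'$ (necessarily $\mu \geq 1$), likewise $|\<w(i), x'-x\>| = \Theta(\<w(i), w_\mu\>)$. For the monomial factors I compare exponents: $\theta(i)^{\<w(i),x\>}/\theta(i)^{\<w(i),y\>} = \theta(i)^{\<w(i), x-y\>}$. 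Here I use that $x \in \TT_\lambda$ while $y \notin \TT_\lambda$: Lemma~\ref{l:mmlrat}.\ref{l:mmlrat.1} gives an index $k \leq \lambda$ with $\<w(i), x-y\> = \Omega(\beta_k(i))$ and strictly positive for large $i$, where $\beta_k(i) = \<w(i), w_k\>$. Since $\theta(i) > 1$ and this exponent is positive and bounded below by $\Omega(\beta_k(i))$, the monomial ratio $\theta(i)^{\<w(i),x-y\>}$ tends to $+\infty$ — provided $\theta(i)^{\beta_k(i)} \to \infty$, which I would arrange or extract from the fact that the toric jet is adapted (so $\theta(i)^{\<w(i),w_j\>}$ has a limit in $[1,\infty]$ for each $j$), together with a small argument ruling out the limit being finite when $k \leq \lambda$ and the reaction at level $\lambda$ is genuinely essential.

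The main obstacle I anticipate is exactly this last point: handling the degenerate case where the dominating monomial ratio does not actually blow up because $\theta(i)^{\beta_k(i)}$ stays bounded. If $\theta(i)^{\beta_1(i)}$ is bounded, then (since $\beta_j(i)/\beta_1(i) \to 0$ for $j > 1$) all the relevant monomial factors are bounded, and the domination must come purely from the inner-product factors — which requires $\mu < \lambda$, i.e., the sustaining reaction has strictly smaller level than the draining one, so that $\<w(i), w_\mu\>/\<w(i), w_\lambda\> \to \infty$. So the real content is a case split: either the monomials separate the two pulls (when $\theta(i)^{\beta_k(i)} \to \infty$ for the relevant $k \leq \lambda$), or, when the monomials are comparable, one shows the sustaining reaction supplied by Proposition~\ref{p:stendjets} can be taken to have small enough level that the inner-product factors separate them. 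I would structure the proof around that dichotomy, using Lemma~\ref{l:mmlrat} to control $\<w(i), x-y\>$ from below in terms of the $\beta_j(i)$ and Proposition~\ref{p:relrn} to pin down the growth orders of the inner-product factors; assembling these, $|\pull_{x\to x'}(i)/\pull_{y\to y'}(i)| \to +\infty$, which is the definition of domination.
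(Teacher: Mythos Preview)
Your overall strategy matches the paper's: locate the level~$\lambda$ of the draining reaction, use Lemma~\ref{l:endjets}.2 to place $y \notin \TT_\lambda$, produce a sustaining reaction via Proposition~\ref{p:stendjets}, and then run a monomial-vs-inner-product dichotomy on the ratio of pulls. But there is a genuine gap in the execution.

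You commit at the outset to a single candidate $x \to x'$ obtained by applying Proposition~\ref{p:stendjets}.3 to the \emph{full} frame~$\ol w$, so that $x \in \TT_\ell$. This gives no useful bound on the level~$\mu$ of $x \to x'$: it can be any integer up to~$\ell$. If $\mu > \lambda$, then by Proposition~\ref{p:relrn}.2 the inner-product ratio satisfies
\[
  \frac{|\<w(i),x'-x\>|}{|\<w(i),y'-y\>|}
  = \Theta\!\left(\frac{\beta_\mu(i)}{\beta_\lambda(i)}\right) \to 0,
\]
and nothing you have written forces the monomial ratio $\theta(i)^{\<w(i),x-y\>}$ to compensate: the product is genuinely indeterminate. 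Your proposed case split on whether $\theta(i)^{\beta_k(i)} \to \infty$ does not rescue this, because the index~$k$ from Lemma~\ref{l:mmlrat}.1 depends on your choice of~$x$, so the dichotomy becomes circular. And the phrase ``can be taken to have small enough level'' is precisely the content that needs to be supplied.

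The paper fixes this by performing the case split \emph{before} choosing the sustaining reaction, and splitting on $\lim_i \theta(i)^{\beta_\lambda(i)}$ (which depends only on the draining reaction, not on~$x$). In each case it applies Proposition~\ref{p:stendjets} with a \emph{truncated} frame: $(w_1,\ldots,w_\lambda)$ when the limit is $+\infty$, obtaining a sustaining reaction of level $\leq \lambda$ with $x \in \TT_\lambda$; and $(w_1,\ldots,w_{\lambda-1})$ when the limit is finite (forcing $\lambda > 1$ since $\theta(i)^{\beta_1(i)} \to \infty$ always), obtaining level $\leq \lambda-1$ with $x \in \TT_{\lambda-1}$. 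The truncation is exactly what bounds~$\mu$ and makes the inner-product ratio $\Omega(1)$ in the first case and $\to\infty$ in the second. In the second case one also invokes Lemma~\ref{l:mmlrat}.2 (not just part~1), using $x \in \TT_{\lambda-1}$, to show the monomial ratio is $\Omega(1)$.
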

\begin{proof}
Suppose $w_1\notin\HPerp$.  Let $w(i) = \sum_{j=1}^\ell \beta_j(i)
w_j$.  Let $y \to y' \in \RR$ be a draining reaction along $\unitJet$.
We must find a sustaining reaction $x \to x'$ that dominates $y \to
y'$; that is
\begin{align}\label{eq:pullratio}
\bigg|\frac{\pull_{x \to x'}(i)}{\pull_{y \to y'}(i)}\bigg|
=
\bigg|\frac{\<w(i),x'-x\>}{\<w(i),y'-y\>}\bigg|\theta(i)^{\<w(i), x-y\>}\to\infty.
\end{align}
For $k = 1,\ldots,\ell$, define a sequence $(v_k(i))$ of unit vectors
by
\begin{align}\label{eq:seq_vk}
v_k(i) = \frac{\sum_{j=1}^k \beta_k(i) w_j}{\big\lVert \sum_{j=1}^k
\beta_k(i) w_j \big\rVert}.
\end{align}
For $k = 1,\ldots,\ell$, the sequence $(v_k(i))$ is a unit jet with
frame $(w_1,\ldots,w_k)$; this is by construction, starting with the
fact that $\unitJet$ is a unit jet.

Let $\lambda$ be the level of the draining reaction $y \to y'$.  By
Definition~\ref{d:pull}.3, $L = \lim\limits_{i\to\infty}
\theta(i)^{\beta_\lambda(i)}$ exists
in $[1,\infty]$.  The proof now breaks into two cases based on whether
$L = \infty$ or $1 \leq L < \infty$.

\paragraph*{Case 1 (Monomial domination):}
$\lim\limits_{i\to\infty} \theta(i)^{\beta_\lambda(i)} = \infty$.


The network $G$ is strongly endotactic, $w_1 \notin \HPerp$ by
hypothesis, and $(w_1, \ldots, w_\lambda)$ is a frame in $\bR^\SS$, so
by Proposition~\ref{p:stendjets} there exists a reaction $x \to x'$
in~$\RR$ that is $(w_1,\ldots,w_\lambda)$-sustaining with $x \in
\TT_\lambda$.  Hence the level of $x \to x'$ (which exists by
definition of sustaining) is at most~$\lambda$.  Also, by
Proposition~\ref{p:relrn}.1, $x \to x'$ is sustaining along the unit
jet $(v_\lambda(i))$.  Thus $x \to x'$ is sustaining along $(\wI)$
because $w(i) = \lVert \sum_{j=1}^k \beta_k(i) w_j \rVert v_k(i) +
O(\beta_{k+1}(i))$.  Thus it suffices to prove that the
limit~\eqref{eq:pullratio} holds for this reaction $x \to x'$.

Proposition~\ref{p:relrn}.2 and the definition of unit jet imply the
following asymptotics:
\begin{align*}
\frac{ |\<w(i),x'-x\>| }{\beta_\lambda(i) } = \Omega(1)
	\quad {\rm and } \quad
\frac{\beta_\lambda(i)}{|\<w(i),y'-y\>|} = \Theta(1).
\end{align*}
Thus the ratio of inner products satisfies
\begin{align}\label{eq:ipo1}
  \bigg|\frac{\<w(i),x'-x\>}{\<w(i), y'-y\>} \bigg| = \Omega(1).
\end{align}


Since $x\in\TT_\lambda$ and $y \in \source{\RR} \setminus \TT_\lambda$
(by Lemma~\ref{l:endjets}.2), they satisfy the hypotheses of
Lemma~\ref{l:mmlrat}.\ref{l:mmlrat.1}, so we obtain an index $k \leq
\lambda$ such that the following inequality holds for the monomial
term of interest (which also uses the fact that $\theta(i) > 1$ for
all~$i$):
\begin{align*}
\theta(i)^{\<w(i),x-y\>}
   &\geq \theta(i)^{ \beta_k(i) \<w_k, x-y\> + O(\beta_{k+1}(i))}
\notag
\\ &= \big(\theta(i)^{\beta_k(i)}\big)^{\<w_k, x-y\> +
      O(\beta_{k+1}(i)/\beta_k(i))}.
\end{align*}
This last quantity has limit $\infty$ as $i$ grows because
\begin{bullets}
\item%
$\<w_k, x-y\> > 0$ by definition of the index $k$ that arose from
Lemma~\ref{l:mmlrat}.\ref{l:mmlrat.1};
\item%
$\theta(i)^{\beta_k(i)} \geq \theta(i)^{\beta_\lambda(i)} \to \infty$
for large $i$ by assumption in this Case~1; and
\item%
$0 = O\big(\beta_{k+1}(i)/\beta_k(i)\big)$.
\end{bullets}
Therefore, combining with equation~\eqref{eq:ipo1}, the desired
limit~\eqref{eq:pullratio} holds.

\paragraph*{Case 2 (Inner product domination):}
$1 \leq \lim\limits_{i\to\infty} \theta(i)^{\beta_\lambda(i)}<\infty$.

The level of $y \to y'$ satisfies $\lambda > 1$ because otherwise
$\lim\limits_{i\to\infty} \theta(i)^{\beta_\lambda(i)} =
\lim\limits_{i\to\infty} \theta(i) = \infty$ by Lemma~\ref{l:w1} and
the definition of toric jet.
Consider, therefore, the unit jet $(v_{\lambda-1}(i))$ defined
in~\eqref{eq:seq_vk} with frame $(w_1, \ldots, w_{\lambda-1})$.  The
network $G$ is strongly endotactic, so as in Case 1, there exists a
reaction $x \to x'$ in $\RR$, with level $\leq \lambda - 1$ and $x \in
\TT_{\lambda-1}$, that is sustaining along $(v_{\lambda-1}(i))$ and
hence also along $(\wI)$.


Again by Proposition~\ref{p:relrn}.2,
$$%
\lim\limits_{i \to \infty}\frac{|\<w(i),x'-x\>|}{\beta_\lambda(i)} = \infty
\quad\text{and}\quad
\frac{\beta_\lambda(i)}{|\<w(i),y'-y\>|} = \Theta(1),
$$
the former because
$\displaystyle\frac{|\<w(i),x'-x\>|}{\beta_{\lambda-1}(i)} =
\Omega(1)$.
Therefore
\begin{align}\label{eq:ipo2}
\lim\limits_{i \to \infty}\bigg|\frac{\<w(i),x'-x\>}{\<w(i),y'-y\>}\bigg| = \infty.
\end{align}
As $x \in \TT_{\lambda-1}$ and $y \in \source\RR$,
Lemma~\ref{l:mmlrat}.\ref{l:mmlrat.2} implies the first inequality
here:
\begin{align*}
\theta(i)^{\<w(i),x-y\>}
   &\geq \theta(i)^{ \beta_\lambda(i) \<w_\lambda, x-y\> + O(\beta_{\lambda+1}(i))}
\\ &= \left(\theta(i)^{\beta_\lambda(i)}\right)^{\<w_\lambda, x-y\> +
      O(\beta_{\lambda+1}(i)/\beta_\lambda(i))}
\\ & \to \left( \lim\limits_{i \to \infty} \thetaI^{\beta_\lambda(i)}\right)
     ^{\<w_\lambda, x-y\> + 0}
     \qquad\text{because }\bCoef{\lambda}i\text{ dominates }\bCoef{\lambda+1}i
\\
   &\geq 1\hspace{30.5ex}
    \text{because }1\leq\lim\limits_{i\to\infty}\theta(i)^{\beta_\lambda(i)}<\infty.
\end{align*}
Therefore $\theta(i)^{\<w(i),x-y\>} = \Omega(1)$.  When combined
with~\eqref{eq:ipo2} this implies that the sustaining reaction $x \to
x'$ dominates $y \to y'$.  Hence the required
limit~\eqref{eq:pullratio} holds for Case 2.
\end{proof}


\begin{example}\label{ex:dom_lem_false_wo_Hperp}
Proposition~\ref{p:domination} is false without the assumption that
$w_1 \notin \HPerp$.  For example, consider the network consisting of
the single reversible reaction $A \lra B$.  The direction $u =
(1/\sqrt{2},1/\sqrt{2})$ is perpendicular to the stoichiometric
subspace, and $v = (-1/\sqrt{2},1/\sqrt{2})$ lies in the subspace.
Consider an adapted toric jet $\toricJet$ framed by $(u,v)$.  Then
the reaction $B \to A$ is sustaining along $(w(i))$, while the
reaction $A \to B$ is draining.  The ratio of their pulls is
\begin{align}\label{eq:ex_pullratio}
\bigg|\frac{\pull_{B \to A}(i)}{\pull_{A \to B}(i)}\bigg|
  = \theta(i)^{\<\beta_1(i)u + \beta_2(i)v, ~ (0,1)-(1,0)\>}
  = (\theta(i)^{\beta_2(i)})^{\<v, ~ (-1,1)\>}
  = (\theta(i)^{\beta_2(i)})^{\sqrt{2}} .
\end{align}
In particular, if $\beta_2(i)$ approaches~$0$ much faster than
$\theta(i)$ approaches~$\infty$, then the limit of
$\theta(i)^{\beta_2(i)}$ does not diverge, so the limit of the
ratio~\eqref{eq:ex_pullratio} is not~$\infty$, whence the sustaining
reaction does not dominate the draining reaction.
\end{example}


\begin{remark}\label{rmk:connection_Anderson}
The idea of using a \good reaction to dominate \bad reactions in the
proof of Proposition~\ref{p:domination} is similar to arguments in the
proof of Lemma~4.8 in the work of Anderson~\cite{Anderson11}.
Furthermore, Anderson's result is similar to our
Theorem~\ref{thm:LyFncWorks_cpc}, below: essentially, both results
show that a certain function decreases along trajectories outside a
compact set.
Anderson's concept of ``partitioning vectors $y \in \CC$ along a
sequence of trajectory points'' focuses first on the question of which
monomials $\theta^{\<w, y\>}$
dominate, and then later in the analysis analyzes the inner product 
$\<w, y'-y\>$, whereas we consider 
the entire product $\theta^{\<w, y\>}\<w, y'-y\>$ in the definition of 
pull~\eqref{eq:pull}.  Anderson's concept fits into ours in the following 
way.  For a sequence of
trajectory points, written as $\thetaI^{\wI}$, that do not remain in a
compact set, consider the ``top tier'' defined by the monomial or the
pull, respectively.  That is, the top tier is the set of reactions $y
\to y'$ such that the corresponding monomials or, respectively,
corresponding pulls dominate all others along the sequence.  For
strongly endotactic networks, these two top tiers coincide, with the
possible exception of reactions orthogonal to a limiting direction of
the vectors $(\wN)$.
\end{remark}

\begin{remark}\label{rmk:power}
Some ideas in the proof of Proposition~\ref{p:domination} are cognate
to ideas in Power Geometry~\cite{Bruno}.  Specifically, in Power
Geometry, to determine which terms in a polynomial dominate (for
instance, when certain coordinates go to zero of infinity) one works
in the log of the coordinates and examines which exponent vectors of
the polynomial lie in the relevant face of the Newton polytope of the
polynomial.
\end{remark}

\section{A Lyapunov-like function for strongly endotactic networks}\label{s:prelya}

This section uses the results on jets from the previous section to
show (Theorem~\ref{thm:LyFncWorks_cpc}) that for strongly endotactic
networks, outside a compact set the function $g(x) = \sum_{i\in \SS}
x_i \log x_i - x_i$ from Definition~\ref{d:Lyap} decreases along
trajectories.


\begin{definition}\label{d:preL}
If $N$ is a confined reaction system, specified by a reaction network
$\GG$, a tempering~$\kappa$, and an invariant polyhedron~$\invtPoly$,
then $g$ \emph{decreases along trajectories} of the mass-action
differential inclusion (Definition~\ref{d:MADiffI}) arising from~$N$
\emph{outside a compact set} if there exists a compact set $\Kset
\subseteq \relIntP$ such that for all trajectories $x(t)$, the time
derivative satisfies $\frac{d}{dt}g(x(t))|_{t = t^*} < 0$ whenever
$x(t^*) \in \relIntP \setminus \Kset$.
\end{definition}


\begin{remark}\label{rmk:preL}
Every strict Lyapunov function decreases outside the compact set
consisting of the function's unique minimum.
Functions that decrease outside a compact set should be compared with
Foster--Lyapunov functions used in the analysis of Markov
chains~\cite[Appendix B.1]{MT}.  Foster--Lyapunov functions are used,
for instance, to prove that a Markov chain always reaches a certain
set, the analogue of our compact set $\Kset$.
\end{remark}

The next lemma states that our compact set of interest, $\Kset =
\Kset_{\cutoff}$, is a compact subset of the positive orthant
$\Rplus^\SS$.  In contrast, some of the sublevel sets of the function
$g$ are not compact.  Indeed, it is the fact that some level sets of
$g$ intersect the boundary of the positive orthant that prevents us
from using the sublevel sets as our sets $\Kset$; see
Remark~\ref{rmk:whyNoPermanence}.


\begin{lemma}\label{l:cutoff_cpc}
Let $\invtPoly$ be an invariant polyhedron of a reaction network $\GG$,
and let $\theta> 1$.  Then the set
$$%
  \Kset_\theta
  =
  \big\{\theta_0^w \,\big|\, 1 \leq \theta_0 \leq \theta \text{ and } w \in
  \bR^\SS \text{ with } ||w|| = 1\big\} \cap \invtPoly
$$
is a compact subset of $\relIntP$.
\end{lemma}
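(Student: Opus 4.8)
The plan is to show $\Kset_\theta$ is closed and bounded in $\Rplus^\SS$ and that it avoids the boundary of the orthant, so that it is a compact subset of $\relIntP$. First I would observe that the set
$$
  B = \big\{\theta_0^w \,\big|\, 1 \leq \theta_0 \leq \theta,\ w \in \bR^\SS,\ \|w\| = 1\big\}
$$
is the continuous image of the compact set $[1,\theta] \times S^{m-1}$ (where $m = |\SS|$) under the map $(\theta_0, w) \mapsto \theta_0^w = (\theta_0^{w_1}, \ldots, \theta_0^{w_m})$, which is continuous on this domain; hence $B$ is compact. Equivalently, via the homeomorphism $\log: \Rplus^\SS \to \bR^\SS$, the set $\log B$ is exactly the closed ball of radius $\log\theta$ centered at the origin in $\bR^\SS$ (using the polar-coordinate description of Remark~\ref{rmk:polar_c}: $w$ and $\log\theta_0 \in [0,\log\theta]$ are polar coordinates), which is plainly compact, and $\log$ is a homeomorphism. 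Then $\Kset_\theta = B \cap \invtPoly$ is the intersection of a compact set with the closed set $\invtPoly = (x_0 + H)\cap\Rnn^\SS$ (closed in $\bR^\SS$), hence compact in $\bR^\SS$.

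It remains to check that $\Kset_\theta \subseteq \relIntP$, i.e., that $\Kset_\theta$ contains no point with a zero coordinate. But every point of $B$ lies in $\Rplus^\SS$: if $\theta_0 \geq 1$ and $w \in \bR^\SS$, then each coordinate $\theta_0^{w_i}$ is a strictly positive real number. Therefore $B \subseteq \Rplus^\SS$, so $\Kset_\theta = B \cap \invtPoly \subseteq \Rplus^\SS \cap \invtPoly = \relInt(\invtPoly)$, since the relative interior of $\invtPoly = (x_0+H)\cap\Rnn^\SS$ is precisely its intersection with the open orthant $\Rplus^\SS$ (as $x_0 \in \Rplus^\SS$, so $\invtPoly$ meets the open orthant and hence $(x_0+H)\cap\Rplus^\SS$ is the relative interior). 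Thus $\Kset_\theta$ is a compact subset of $\relIntP$, as claimed.

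I do not anticipate a serious obstacle here; the only point requiring mild care is the identification of $\relInt(\invtPoly)$ with $(x_0+H)\cap\Rplus^\SS$, which uses that $x_0$ is a positive vector so that the affine subspace $x_0 + H$ genuinely meets the open orthant, together with the standard fact that for a polyhedron of the form (affine subspace) $\cap$ (closed orthant) meeting the open orthant, the relative interior is the intersection with the open orthant. Everything else is the elementary observation that $\theta_0^{w}$ has strictly positive coordinates together with compactness of a continuous image of a compact set.
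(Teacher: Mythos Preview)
Your proof is correct and follows essentially the same approach as the paper: identify $B$ as the coordinatewise exponential of the closed ball of radius $\log\theta$ in $\bR^\SS$ (hence compact), intersect with the closed set $\invtPoly$, and note that exponentials are strictly positive so the result lies in $\relIntP$. Your version adds a bit more detail on why $\relIntP = \invtPoly \cap \Rplus^\SS$, but the argument is the same.
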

\begin{proof}
$\Kset_\theta$ is compact because it is the intersection of a closed
set~$\invtPoly$ with a continuous image (under coordinatewise
exponential) of a compact set, namely, the closed ball of radius $\log
\theta$ around the origin in $\bR^\SS$.
The intersection is in $\relIntP$ because exponentials never vanish.
\end{proof}


\begin{lemma}\label{l:preL}
Let $N$ be a confined reaction system, specified by a network $\GG$, a
tempering $\kappa$, and an invariant polyhedron~$\invtPoly$.  Suppose
there exists $\cutoff>1$ such that
\begin{align}\label{eq:sum_pulls}
  \sum_{r = (y \to y') \in \RR} k_r \param^{\<w,y\>} \<w, y' - y\> < 0
\end{align}
for all $\theta_0 > \cutoff$ and unit vectors $w$ in $\bR^\SS$ with
$\theta_0^w \in \invtPoly \setminus \Kset_\theta$, and for all
$(k_r)_{r\in\RR}\in\prod_{r\in\RR}\kappa(r)$.  Then outside the set
$\Kset_\cutoff$ in Lemma~\ref{l:cutoff_cpc}, the function $g(x)=
\sum_{i\in\SS} x_i \log x_i - x_i$ decreases along trajectories of the
mass-action differential inclusion (Definition~\ref{d:MADiffI})
arising from~$N$.
\end{lemma}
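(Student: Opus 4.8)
The plan is to compute the time derivative $\frac{d}{dt}g(x(t))$ at a trajectory point and show it is exactly (a positive multiple of) the left-hand side of~\eqref{eq:sum_pulls}, evaluated at the direction and radius corresponding to that trajectory point. First I would note that every point $x$ of $\relIntP$ can be written uniquely as $x = \theta_0^w$ for some unit vector $w \in \bR^\SS$ and some $\theta_0 \geq 1$ (this is the polar-coordinate description from Remark~\ref{rmk:polar_c} applied to $\log x \in \bR^\SS$; the radius is $\theta_0 = \exp(\lVert \log x\rVert)$ and the direction is $w = \log x / \lVert \log x \rVert$, with $w$ arbitrary when $x$ is the all-ones vector). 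By Lemma~\ref{l:cutoff_cpc}, $\Kset_\cutoff$ is a compact subset of $\relIntP$, so it serves as the set $\Kset$ required by Definition~\ref{d:preL}. It remains to check that $\frac{d}{dt}g(x(t))|_{t=t^*} < 0$ whenever $x(t^*) \in \relIntP \setminus \Kset_\cutoff$.

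Next I would carry out the chain-rule computation. Fix a trajectory $x(t)$ of the mass-action differential inclusion and a time $t^*$ with $x(t^*) = \theta_0^w \in \relIntP \setminus \Kset_\cutoff$; writing $x$ for $x(t^*)$, the fact that $x \notin \Kset_\cutoff$ together with $x \in \invtPoly$ forces $\theta_0 > \cutoff$ (if $\theta_0 \le \cutoff$ then $x \in \Kset_\cutoff$ by definition). By the chain rule, $\frac{d}{dt}g(x(t))|_{t=t^*} = \langle \nabla g(x), \dot x(t^*)\rangle$. By Definition~\ref{d:MADiffI}, the tangent vector $\dot x(t^*)$ lies in the fiber of the differential inclusion over $x$, so $\dot x(t^*) = \sum_{r = (y\to y')\in\RR} k_r x^{y}\flux{r}$ for some choice of $k_r \in \kappa(r)$. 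Since $g$ here is $g_{\ol 1}$ in the notation of Definition~\ref{d:Lyap} (the function with $\alpha = (1,\dots,1)$), equation~\eqref{eq:grad_toric} of Lemma~\ref{l:toricgrad}, or just the direct computation $\nabla g(x) = \log x$, gives $\nabla g(\theta_0^w) = (\log\theta_0)\,w$. Meanwhile $x^{y} = (\theta_0^w)^y = \theta_0^{\langle w, y\rangle}$. Substituting,
\begin{align*}
  \frac{d}{dt}g(x(t))\Big|_{t=t^*}
  &= \Big\langle (\log\theta_0)\,w,\ \sum_{r=(y\to y')\in\RR} k_r \theta_0^{\langle w,y\rangle} (y'-y)\Big\rangle
\\&= (\log\theta_0) \sum_{r=(y\to y')\in\RR} k_r \theta_0^{\langle w,y\rangle}\langle w, y'-y\rangle.
\end{align*}

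Finally I would invoke the hypothesis. Since $\theta_0 > \cutoff > 1$ we have $\log\theta_0 > 0$, so the sign of $\frac{d}{dt}g(x(t))|_{t=t^*}$ agrees with the sign of $\sum_{r=(y\to y')\in\RR} k_r \theta_0^{\langle w,y\rangle}\langle w, y'-y\rangle$. The hypothesis~\eqref{eq:sum_pulls} applies precisely because $\theta_0 > \cutoff$, $w$ is a unit vector with $\theta_0^w = x \in \invtPoly \setminus \Kset_\theta$, and $(k_r)_{r\in\RR} \in \prod_{r\in\RR}\kappa(r)$; it tells us this sum is strictly negative. Hence $\frac{d}{dt}g(x(t))|_{t=t^*} < 0$, as desired, and $g$ decreases along trajectories outside $\Kset_\cutoff$. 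The only mild subtlety — hardly an obstacle — is the bookkeeping that a point outside $\Kset_\cutoff$ really does have radial coordinate $\theta_0 > \cutoff$ and that $x$ lies in $\invtPoly \setminus \Kset_\theta$ so that the hypothesis is literally applicable; everything else is the routine chain-rule identity identifying the derivative of $g$ with the sum of pulls up to the common positive factor $\log\theta_0$.
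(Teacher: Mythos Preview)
Your proof is correct and follows essentially the same approach as the paper: write the trajectory point as $\theta_0^w$ with $\theta_0 > \cutoff$, apply the chain rule with $\nabla g(\theta_0^w) = (\log\theta_0)\,w$, substitute the mass-action form of $\dot x$, and observe that the result is $(\log\theta_0)$ times the sum in~\eqref{eq:sum_pulls}. The only difference is that you spell out the polar-coordinate bookkeeping (why $x \notin \Kset_\cutoff$ forces $\theta_0 > \cutoff$) in slightly more detail than the paper, which simply cites Lemma~\ref{l:cutoff_cpc}.
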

\begin{proof}
We need $\frac{d}{dt} g (x(t)) |_{t=t^*} < 0$ for any trajectory point
$x(t^*) \in \relInt(\invtPoly) \setminus \Kset_\cutoff$.  Such a
trajectory point can be written as $x(t^*) = \theta_0^w$ where $\param
> \cutoff$ and $w$ is a unit vector in $\bR^\SS$, by
Lemma~\ref{l:cutoff_cpc}.  Additionally, $\frac{d}{dt}
x(t)|_{t=t^*}$ has the form given in Definition~\ref{d:MADiffI} for
some rates $k_r \in \kappa (r)$.  The following computation is
straightforward, using of the gradient of~$g$:
\begin{align*}
  \frac{d}{dt} g (x(t)) |_{t=t^*}
  & =
  \Big\<\nabla g(\param^w), \sum_{r\in\RR}k_r (\param^w)^{\source{r}}
  \flux{r}\Big\>
  \notag
\\
  & =
  \Big\<(\log \param)w, \sum_{r\in \RR} k_r \param^{\<w,\source{r}\>}
  \flux{r}\Big\>
  \notag
\\
  & =
  (\log \param) \sum_{r = (y \to y' ) \in \RR} k_r \param^{\<w,y\>}
  \<w, y' - y\>.
\end{align*}
As $\log \param > 0$ because $ \param > 1$, the desired inequality
$\frac{d}{dt} g (x(t)) |_{t=t^*} < 0$ follows
from~\eqref{eq:sum_pulls}.
\end{proof}


\begin{theorem}\label{thm:LyFncWorks_cpc}
If $N$ is a confined strongly endotactic reaction system, specified by
a reaction network $\GG$, a tempering $\kappa$, and an invariant
polyhedron~$\invtPoly$, then outside a compact set
$\Kset\subseteq\bR^\SS_{>0}$, the function $g(x)= \sum_{i \in \SS} x_i
\log x_i - x_i $ decreases along trajectories of the mass-action
differential inclusion (Definition~\ref{d:MADiffI}) arising from~$N$.
\end{theorem}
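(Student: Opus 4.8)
The plan is to deduce Theorem~\ref{thm:LyFncWorks_cpc} from Lemma~\ref{l:preL} by verifying hypothesis~\eqref{eq:sum_pulls}: we must produce a single cutoff $\cutoff > 1$ so that for every unit vector $w$ and every $\theta_0 > \cutoff$ with $\theta_0^w \in \invtPoly \setminus \Kset_\theta$, and for every choice of rates $k_r \in \kappa(r)$, the weighted sum $\sum_{r=(y\to y')} k_r \theta_0^{\<w,y\>} \<w,y'-y\>$ is strictly negative. The natural strategy is proof by contradiction: suppose no such $\cutoff$ works. Then there is a sequence of unit vectors $w(i)$, scalars $\theta(i) \to \infty$, and rate choices $k_r(i) \in \kappa(r)$ with $\theta(i)^{w(i)} \in \invtPoly$ (hence eventually leaving every $\Kset_\theta$, so genuinely escaping to the boundary of $\ol\invtPoly$ in $[0,\infty]^\SS$) along which the sum is $\geq 0$. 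The point of the jet machinery is to tame this sequence.

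The first step is to pass to subsequences. By Lemma~\ref{l:jet} (unit jets are abundant), a subsequence of $(w(i))$ is a unit jet; by Lemma~\ref{l:adapted}, a further subsequence makes it adapted to $G$, and makes $(\theta(i)^{w(i)})$ into an adapted toric jet framed by some $\ol w = (w_1,\dots,w_\ell)$; and since each $\kappa(r)$ is a compact interval we may also assume $k_r(i) \to k_r^* \in \kappa(r)$ for every $r$. The key geometric input is that $w_1 \notin \HPerp$: because the toric jet lies in $\invtPoly \subseteq x_0 + H$ and escapes to infinity, its limiting direction $w_1$ (Lemma~\ref{l:w1}) cannot be orthogonal to the stoichiometric subspace $H$ — otherwise the trajectory points would stay bounded distance from a fixed affine translate, contradicting escape; I would argue this cleanly via Lemma~\ref{l:oncint} applied to the boundary accumulation point, exactly as in the proof of Theorem~\ref{t:birchExt1}. (This is also the place where strong endotacticity's ``not orthogonal to $H$'' clause gets invoked — Proposition~\ref{p:stendjets} and Proposition~\ref{p:domination} both need $w_1 \notin \HPerp$.)

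With $w_1 \notin \HPerp$ in hand, invoke Proposition~\ref{p:domination}: every draining reaction along $(w(i))$ is dominated (pull ratio $\to \infty$) by some sustaining reaction along $(w(i))$. Now analyze the sign of the sum $\sum_r k_r(i) \pull_r(i)$, where $\pull_r(i) = \<w(i),y'-y\>\theta(i)^{\<w(i),y\>}$. Partition reactions into sustaining, draining, and inessential (the last contributing exactly $0$). Among all essential reactions, consider those whose pull grows fastest (the ``top tier'' — well-defined after a further subsequence ordering the finitely many growth rates $\theta(i)^{\beta_k(i)}$ and inner-product orders). By Proposition~\ref{p:domination}, every draining reaction is strictly dominated by some sustaining reaction, so the top tier contains no draining reaction; it must contain at least one sustaining reaction (strong endotacticity via Proposition~\ref{p:stendjets} guarantees a sustaining reaction with reactant in $\TT_\ell$, which lies in the top tier, or at least is not dominated). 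Since sustaining reactions have $\<w(i),y'-y\> < 0$ for large $i$ and $k_r(i) \to k_r^* > 0$, the dominant terms in $\sum_r k_r(i)\pull_r(i)$ are all strictly negative and of the same order, while every other term is $o$ of them; hence the whole sum is strictly negative for large $i$ — contradicting $\sum_r k_r(i)\pull_r(i) \geq 0$.

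The main obstacle is bookkeeping the ``top tier'' argument precisely: pulls are products of a monomial factor $\theta(i)^{\<w(i),y\>}$ and an inner-product factor $\<w(i),y'-y\> = \Theta(\beta_\lambda(i))$, so comparing growth rates requires simultaneously tracking the $\Theta/\Omega$ asymptotics in the $\beta_j(i)$ and the behavior of $\theta(i)^{\beta_j(i)} \in [1,\infty]$ — exactly the data that Definition~\ref{d:pull} of ``adapted'' and Proposition~\ref{p:domination} were set up to control. One must be careful that ``dominated by a sustaining reaction'' is a pairwise statement and upgrade it to: the sum of sustaining top-tier pulls dominates the sum of all draining pulls (finitely many reactions, so this is immediate once each draining pull is dominated by a sustaining one — but one should check the dominating sustaining reactions are themselves in the top tier, or simply absorb each draining reaction's pull into its dominator). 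Once that is arranged, taking $\Kset = \Kset_\cutoff$ from Lemma~\ref{l:cutoff_cpc} — a compact subset of $\Rplus^\SS$ — and applying Lemma~\ref{l:preL} finishes the proof.
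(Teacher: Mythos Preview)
Your proposal is correct and follows the paper's overall architecture: reduce to Lemma~\ref{l:preL}, argue by contradiction, extract an adapted toric jet via Lemmas~\ref{l:jet} and~\ref{l:adapted}, establish $w_1 \notin \HPerp$, and then invoke Proposition~\ref{p:domination} to contradict the nonnegativity of the sum. Your final ``top tier'' accounting is a clean repackaging of the paper's argument, which instead bounds each draining contribution by $1/|\RR|$ times its dominating sustaining contribution using the uniform tempering bounds $\kappa(r) \subseteq (\varepsilon, 1/\varepsilon)$; the two are equivalent once one notes there are only finitely many reactions.

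The one substantive difference is how you establish $w_1 \notin \HPerp$. The paper invokes Corollary~\ref{cor:bddStoicSubComponent} (the perturbed Birch's theorem): it fixes a neighborhood $\Oset$ of the Birch point, observes the toric jet eventually avoids $\Oset$, and concludes $\lVert w_H(i)\rVert \geq \mu > 0$, whence $\lVert w_{1,H}\rVert \geq \mu$. The paper explicitly flags this as a novel application of Birch's theorem. Your route---pass to a boundary accumulation point $x^* \in \partial[0,\infty]^\SS \cap \ol\invtPoly$, note that the normalized gradient $w(i)$ converges to $w_1$, apply Lemma~\ref{l:limDirs} to place $w_1$ in the outer normal cone at~$x^*$, then apply Lemma~\ref{l:oncint}---is exactly the argument inside the proof of Theorem~\ref{t:birchExt1}, and it bypasses Birch's theorem itself (Theorem~\ref{t:birch}) entirely. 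This is a genuine simplification: Lemmas~\ref{l:limDirs} and~\ref{l:oncint} are elementary, whereas Corollary~\ref{cor:bddStoicSubComponent} sits atop Theorems~\ref{t:birch}, \ref{t:birchExt1}, and~\ref{thm:extBirch}. (Your informal heuristic about ``bounded distance from a fixed affine translate'' is muddled---it conflates energy space and concentration space---but your rigorous plan via Lemma~\ref{l:oncint} is sound.)
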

\begin{proof}
Begin by fixing a number $0 < \varepsilon < 1$ such that $\varepsilon$
is a lower bound for the tempering $\kappa$ and $1/\varepsilon$ is an
upper bound: for all reactions $r \in \RR$, $\kappa(r) \subseteq
(\varepsilon, 1/\varepsilon)$.
The goal is to demonstrate the existence of a cutoff $\cutoff > 1$
such that $g$ decreases along trajectories outside of the compact
subset $\Kset_\theta \subseteq \relIntP$ from
Lemma~\ref{l:cutoff_cpc}.  The proof proceeds by contradiction:
assuming that no such $\cutoff$ exists, construct an impossible toric
jet.

To be precise, assume $g(x)$ fails to decrease along trajectories
outside of each set $\Kset_\theta$.  Pick a sequence $\thetaN \to
\infty$ of real numbers $> 1$, so $\Kset_{\theta(1)} \subseteq
\Kset_{\theta(2)} \subseteq \cdots$ and $\bigcup_i \Kset_{\theta(i)} =
\Rplus^\SS$.
Lemma~\ref{l:preL} grants
\begin{bullets}
\item%
a sequence of points $x(i) = \thetaN^{\wN} \in \invtPoly$, where $\wN$
is a unit vector in $\bR^\SS$, and
\item%
for each reaction $r \in \RR$, a sequence $k_r(i)$ of rates in the
interval $\kappa(r)$
\end{bullets}
such that for all~$i$,
\begin{equation}\label{eq:sum_pulls_violation}
\sum_{r=(y\to y')\in\RR}k_r(i)\thetaN^{\<\wN,y\>}\<w(i),y'-y\> \geq 0.
\end{equation}
Lemma~\ref{l:jet} produces a subsequence of $(w(i))$ that is a unit
jet; call its jet frame $(w_1, \ldots, w_\ell)$ in~$\bR^\SS$ and, as
usual, denote the subsequence again by $(w(i))$, for ease of notation.
The resulting sequence $\toricJet$ is a toric jet by construction.
Lemma~\ref{l:jet} affords a subsequence that is adapted to $\GG$, and
it is this toric jet $\toricJet$ whose impossibility we demonstrate by
appeal to Proposition~\ref{p:domination}, which leverages the strongly
endotactic hypothesis on $\GG$ to deduce the opposite of the
inequality~(\ref{eq:sum_pulls_violation}).
	
Both $k_r(i)$ and $\thetaN^{\<\wN, y\>}$
in~\eqref{eq:sum_pulls_violation} remain strictly positive.  The only
negative contributions to the sum in~\eqref{eq:sum_pulls_violation}
are from reactions that are \good along $\unitJet$, whereas \bad
reactions contribute positively.  To
contradict~\eqref{eq:sum_pulls_violation}, it suffices to show that
for any reaction $y \to y'$ that is \bad along $\unitJet$, the
contribution to the sum~\eqref{eq:sum_pulls_violation} of some \good
reaction $x \to x'$ eventually dominates that of $y \to y'$ by at
least a factor of $|\RR|$, the number of reactions of the network.  In
other words, for a fixed \bad reaction $r = (y \to y')$, it suffices
to exhibit a \good reaction $r_{\mathrm{sus}} = (x \to x')$ such
that~the~inequality
\begin{align}\label{eq:good_vs_bad}
  \frac{k_{r_\mathrm{sus}}(i)}{k_r(i)} \thetaN^{\<\wN, x-y\>}
  \frac{\<w(i), x'-x\>}{\<w(i), y-y'\>}
  \ >\ |\RR|
\end{align}
holds for large~$i$.
The number~$\varepsilon$ was constructed early in the proof so that
$k_{r_\mathrm{sus}}(i) > \varepsilon$ and $k_r(i) < 1/\varepsilon$.
Consequently, the desired inequality~\eqref{eq:good_vs_bad} follows
from the inequality
$$%
\left\lvert \frac{\pull_{x \to x'}(i)}{\pull_{y \to y'}(i)} \right\rvert
  \ =\
  \thetaN ^ {\<\wN ,~x - y\>} ~ \frac{\<w(i), x' - x\>}{\<w(i), y-y'\>}
  \ >\
  \frac{|\RR|}{\varepsilon^2}.
$$
It is sufficient (but not necessary) to prove that the left-hand side
of this inequality
has limit~$\infty$, i.e., the draining reaction is dominated by some
sustaining reaction along this adapted toric jet.  This follows from
Proposition~\ref{p:domination}, completing the proof, once $w_1$ is
verified not to be orthogonal to the stoichiometric subspace~$H$.


To prove that $w_1 \notin \HPerp$, first express each vector~$w_j$
of the jet frame uniquely as a sum $w_{j,H} + w_{j,\HPerp}$ of a
vector $w_{j,H} \in H$ and a vector $w_{j,\HPerp} \in \HPerp$.
Birch's theorem (Theorem~\ref{t:birch}) produces a unique point $q$ in
the intersection $\op{int}(\invtPoly) \cap \{\theta^w \mid \theta \in
\Rplus \text{ and } w \in\nolinebreak H^\perp\}$.
Pick a neighborhood $\Oset$ in $\relIntP$ around $q$ whose closure
$\ol\Oset$ is also contained in~$\relIntP$.  The sequence $\toricJet$
eventually avoids $\Oset$, because $x(i)$ approaches the boundary of
the closure $\ol\invtPoly$ of~$\invtPoly$ in the compactification
$[0,\infty]^\SS$ (see~Remark~\ref{rmk:testSeq}).  Thus, by
Corollary~\ref{cor:bddStoicSubComponent}, there exists a number $0 <
\mu \leq 1$ such that the $H$-components $w_H(i)$ of the unit vectors
$w(i)$ have norm at least $\mu$, for large~$i$:
$$%
\lVert w_H(i) \rVert
  = \lVert\bCoef{1}{i} w_{1,H} + \cdots + \bCoef{\ell}{i} w_{\ell,H}\rVert
\geq\mu.
$$
Since the first coefficient $\bCoef{1}{i}$ approaches~$1$ while all
others approach~$0$, the left-hand side of this inequality
has limit $\lVert w_{1,H}\rVert$, so $\lVert w_{1,H} \rVert \geq \mu
>0$.  Thus $w_{1,H} \neq 0$, as desired.
\end{proof}

\begin{remark}
We wish to emphasize that our extensions of Birch's Theorem 
(in particular, Corollary~\ref{cor:bddStoicSubComponent}) were used 
in the proof of Theorem~\ref{thm:LyFncWorks_cpc} to show that $w_1 
\notin \HPerp$.  This novel use of Birch's Theorem is one of our main 
contributions.  
\end{remark}


\section{Main results: persistence and permanence}\label{s:results}

In this section, we prove that strongly endotactic networks have
bounded trajectories (Theorem~\ref{thm:bddness}), are persistent
(Theorem~\ref{thm:persistence}), and are permanent
(Theorem~\ref{thm:main}).  Although the permanence result is stronger
than the first two, the two weaker results are applied in the proof of
permanence.  In this section, we rely on two key prior results:
\begin{bullets}
\item%
the decrease of the pseudo-Helmholtz free energy function along
trajectories outside a compact set (Theorem~\ref{thm:LyFncWorks_cpc}),
and
\item%
a projection argument from our earlier work~\cite{ProjArg}.
\end{bullets}
Additionally,
we prove the existence of steady states for strongly endotactic
networks in the mass-action ODE setting.

Throughout this section, Notation~\ref{not:products} remains in
effect, as does Definition~\ref{d:Lyap}, which includes the special
case $g(x) = g_{(1,\ldots,1)}(x)= \sum_{i=1}^{|\SS|} x_i \log x_i -
x_i$ from~\cite{Feinberg72,Horn72,HornJackson}.


\subsection{Boundedness of trajectories}\label{sub:boundedness}

Conjecture~\ref{conj:boundedness} holds for strongly endotactic
networks.


\begin{theorem}\label{thm:bddness}
Strongly endotactic networks have bounded trajectories.  More
precisely, the image of every trajectory of every mass-action
differential inclusion (Definition~\ref{d:MADiffI}) arising from a
confined strongly endotactic reaction system is bounded.
\end{theorem}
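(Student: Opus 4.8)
The plan is to use the function $g(x)=\sum_{i\in\SS}(x_i\log x_i-x_i)$ from Theorem~\ref{thm:LyFncWorks_cpc} as a Lyapunov-type function, exploiting that it decreases along trajectories outside a compact set. Fix a confined strongly endotactic reaction system and a trajectory $x\colon I\to\relIntP$ of its mass-action differential inclusion, with $I$ an interval containing its left endpoint, taken to be $0$; recall that a trajectory necessarily stays in $\relIntP\subseteq\invtPoly$. By Theorem~\ref{thm:LyFncWorks_cpc} there is a compact set $\Kset\subseteq\relIntP$ with $\frac{d}{dt}g(x(t))\big|_{t=t^*}<0$ whenever $x(t^*)\in\relIntP\setminus\Kset$. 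The key auxiliary observation I would establish is that the sublevel sets $S_c=\{x\in\invtPoly\mid g(x)\le c\}$ are compact: each is closed, and since $t\mapsto t\log t-t$ is bounded below (by $-1$) on $\Rnn$ and tends to $+\infty$, any point of $\Rnn^\SS$ with a large coordinate has large $g$-value, so $S_c$ is bounded. Note that $\Kset$ is itself \emph{not} a sublevel set of $g$ --- indeed some level sets of $g$ meet $\partial\Rplus^\SS$, cf.\ the discussion preceding Lemma~\ref{l:cutoff_cpc} --- so this step is genuinely needed.

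Next I would choose a level engulfing both the initial point and the obstruction set, say $c=1+\max\big(g(x(0)),\ \sup_{x\in\Kset}g(x)\big)$, finite because $\Kset$ is compact and $g$ continuous. Then $x(0)\in S_c$, and in fact both $x(0)$ and every point of $\Kset$ lie in the relative interior of $S_c$ in $\invtPoly$, since their $g$-values are strictly less than $c$. The claim is that the trajectory never leaves $S_c$. If it did, let $t_1$ be the first exit time; since $x(0)$ is relatively interior to $S_c$ we get $t_1>0$, since $S_c$ is closed and $x$ is continuous we get $x(t_1)\in S_c$, and approaching $t_1$ from the right along points with $g>c$ (using $x(s)\in\invtPoly$, so leaving $S_c$ means $g(x(s))>c$) forces $g(x(t_1))=c$. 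But then $g(x(t_1))=c>\sup_{\Kset}g$, so $x(t_1)\in\relIntP\setminus\Kset$, and Theorem~\ref{thm:LyFncWorks_cpc} gives $\frac{d}{dt}g(x(t))\big|_{t=t_1}<0$; on the other hand $g(x(t))\le c=g(x(t_1))$ for all $t\in[0,t_1]$ forces the left derivative at $t_1$ to be $\ge 0$, a contradiction. Hence $x(I)\subseteq S_c$, which is compact, so the trajectory is bounded.

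The argument is short because the substance is all in Theorem~\ref{thm:LyFncWorks_cpc}; the only new ingredients are the compactness of the $g$-sublevel sets inside $\invtPoly$ and the elementary first-exit-time bookkeeping (which needs a little care at the left endpoint of $I$ and with continuity of $t\mapsto g(x(t))$). The main point to watch --- the nearest thing to an obstacle --- is precisely that $g$ is a Lyapunov function only \emph{outside} the compact set $\Kset$, so one cannot invoke a LaSalle-type conclusion directly; one must instead pass to a sublevel set large enough to swallow $\Kset$ and then observe that, were the trajectory to escape that sublevel set, it would have to do so at a point where $g$ is not decreasing, which Theorem~\ref{thm:LyFncWorks_cpc} forbids.
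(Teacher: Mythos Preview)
Your proof is correct and follows essentially the same strategy as the paper: choose a sublevel set of $g$ large enough to contain both the initial point and the compact set $\Kset$ from Theorem~\ref{thm:LyFncWorks_cpc}, then argue the trajectory cannot escape it. You are more explicit than the paper about why the sublevel sets are bounded and about the first-exit-time bookkeeping (the paper argues informally that $g$ decreases on $\mathcal{Q}\setminus\Kset$ and is bounded on~$\Kset$), but the underlying idea is identical.
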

\begin{proof}
For a mass-action differential inclusion of a confined strongly
endotactic reaction system, specified by $\GG$, a tempering $\kappa$,
and an invariant polyhedron~$\invtPoly$, let $x:I \to\nolinebreak
\relIntP$ denote a trajectory.  By Theorem~\ref{thm:LyFncWorks_cpc},
there exists a compact set $\Kset \subseteq \relIntP$ such that
$\frac{d}{dt} g(x(t))|_{t=t^*} < 0$ for all trajectory points $x(t^*)
\in \relIntP \setminus \Kset$.  Let $M_1 = \sup\{g(x) \mid x \in
\Kset\}$ be the supremum of $g$ on $\Kset$, which is finite by
compactness.  Let $M$ be the maximum of $M_1$ and $g(\inf I)$.  Then
$x(t)$ remains in the bounded sublevel set $\mathcal{Q} = \{x \in
\invtPoly \mid g(x) \leq M\}$.  Indeed, the trajectory begins
in~$\mathcal{Q}$ by construction; when it is in $\mathcal{Q} \setminus
\Kset$, the value of $g$ decreases; and while it is in $\Kset$, the
value of $g$ can not exceed $M_1$ and thus can not exceed~$M$.
\end{proof}


\subsection{Persistence}\label{sub:pers}

Our next main result is the persistence of strongly endotactic
networks (Theorem~\ref{thm:persistence}).  The proof appeals to the
following lemma, a special case of~\cite[Theorem~3.7]{Anderson08} of
Anderson or~\cite[Proposition 20]{TDS} of Craciun et al.  It again
concerns the function $g(x) = \sum_{i=1}^m x_i \log x_i - x_i $, which
extends continuously to the boundary of the nonnegative orthant.


\begin{lemma}\label{l:vtx}
The function $g(x) = \sum_{i=1}^m x_i \log x_i - x_i$ defined on
$\bR_{\geq 0}^m$ has a local maximum at the origin.
\end{lemma}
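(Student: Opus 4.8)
The plan is to prove that $g(x) = \sum_{i=1}^m x_i \log x_i - x_i$ has a local maximum at the origin by exhibiting an explicit neighborhood of $\origin$ on which $g$ is dominated by its value $g(\origin) = 0$. First I would compute $g(\origin)$: using the convention $0 \log 0 = 0$ from Definition~\ref{d:Lyap}, each summand vanishes, so $g(\origin) = 0$. It then suffices to find $\varepsilon > 0$ such that $g(x) \leq 0$ for all $x \in \bR_{\geq 0}^m$ with $\lVert x \rVert < \varepsilon$.

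The key observation is that $g$ is a sum of single-variable functions, $g(x) = \sum_{i=1}^m h(x_i)$ where $h(t) = t \log t - t$ for $t > 0$ and $h(0) = 0$. So it is enough to show that $h(t) \leq 0$ for $t$ in some interval $[0, \varepsilon)$. This is immediate: for $0 < t < 1$ we have $\log t < 0$, hence $t \log t < 0$, and $-t < 0$, so $h(t) < 0$; and $h(0) = 0$. Thus $h(t) \leq 0$ on $[0,1)$, with equality only at $t = 0$. Summing over coordinates, $g(x) = \sum_i h(x_i) \leq 0$ for every $x \in \bR_{\geq 0}^m$ all of whose coordinates are less than $1$ — in particular on the open ball of radius $1$ intersected with $\bR_{\geq 0}^m$, and a fortiori on any smaller neighborhood of $\origin$. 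Since $g(\origin) = 0$, this shows $\origin$ is a local (indeed, the inequality shows the maximum is attained only at $\origin$ within that region) maximum.

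There is essentially no obstacle here; the only point requiring a word of care is the behavior at the boundary of the nonnegative orthant, where some coordinates $x_i$ vanish. But the continuous extension $h(0) = 0$ (justified by $\lim_{t \to 0^+} t \log t = 0$, as already noted in Definition~\ref{d:Lyap}) handles this, and the inequality $h(t) \leq 0$ holds on all of $[0,1)$ including the endpoint $0$, so no case distinction between interior and boundary points of the orthant is actually needed. I would write the proof in two or three sentences: reduce to the one-variable function $h$, check $h \leq 0$ on $[0,1)$ by the sign of $\log t$, and sum.
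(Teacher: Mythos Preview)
Your proof is correct. The paper does not supply its own proof of this lemma; it simply cites it as a special case of results of Anderson~\cite{Anderson08} and Craciun et al.~\cite{TDS}, so your direct elementary argument via the one-variable function $h(t) = t\log t - t$ is a genuine addition rather than a paraphrase. The reduction to $h$ and the sign check on $[0,1)$ are exactly the right way to do this from scratch, and your remark that the maximum is in fact strict on the punctured neighborhood is a small bonus (and is used implicitly later in the paper, in the proof of Lemma~\ref{l:avoid_origin}, where one needs $g$ strictly negative on a punctured neighborhood of the origin).
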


Another result crucial to our proof of Theorem~\ref{thm:persistence}
is the next lemma, which follows from our earlier work on
``vertexical'' families of differential inclusions on hypercubes that
are well-behaved under maps \cite{ProjArg}.


\begin{lemma}\label{l:projArg}
Let $\mathcal{F}$ be the class of all confined strongly endotactic
reaction systems.  If
\begin{enumerate}
\item%
every mass-action differential inclusion of every reaction system
in~$\mathcal{F}$ is repelled by the origin
(Definition~\ref{d:persistent_permanent}), and
\item%
every trajectory of such a differential inclusion is bounded,
\end{enumerate}
then every strongly endotactic reaction network is persistent.
\end{lemma}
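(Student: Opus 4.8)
The plan is to recognize $\mathcal{F}$ as a \emph{vertexical family} of differential inclusions in the sense of~\cite{ProjArg} and then invoke the projection theorem proved there. The differential inclusion arising from a confined strongly endotactic reaction system on species set $\SS$ lives on the interior of the hypercube $[0,\infty]^\SS$, whose faces through the origin are the smaller hypercubes $[0,\infty]^{\SS\setminus T}$ for $T\subseteq\SS$. The first step is to recall from~\cite{ProjArg} the projection operation that removes the species in~$T$: it sends a reaction network $\GG$ to a network on $\SS\setminus T$ whose complexes and reaction vectors are the coordinate projections of those of $\GG$ (some reactions may become trivial, which is exactly why Definition~\ref{d:crn} permits self-loops and isolated vertices), it sends a tempering to a tempering and an invariant polyhedron to an invariant polyhedron of the projected network, and it is compatible with passing to the differential inclusion induced on the corresponding face. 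The essential structural input, established in~\cite{ProjArg}, is that this operation carries confined strongly endotactic reaction systems to confined strongly endotactic reaction systems; hence $\mathcal{F}$ is closed under projection, i.e.\ it is vertexical.

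With that in hand, the lemma is a direct application of the main result of~\cite{ProjArg}: a vertexical family consists entirely of \emph{persistent} differential inclusions as soon as (i)~every member is repelled by the origin (the distinguished vertex of the ambient hypercube, so this is Definition~\ref{d:persistent_permanent}.2 with $V=\{\origin\}$) and (ii)~every member has bounded trajectories. Internally that theorem runs an induction on the number of species: boundedness eliminates $\omega$-limit points with an infinite coordinate, repulsion from the origin eliminates the origin, and an $\omega$-limit point on any other boundary face would force a persistence failure for the lower-dimensional member of $\mathcal{F}$ attached to that face, contrary to the inductive hypothesis. Hypotheses~(1) and~(2) of the present lemma are precisely (i) and~(ii), so the theorem yields that the mass-action differential inclusion of every confined strongly endotactic reaction system is persistent. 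Since ``$\GG$ is persistent'' means exactly that this holds for every tempering $\kappa$ and every invariant polyhedron $\invtPoly$ (Definition~\ref{d:strongly_persistent}), we conclude that every strongly endotactic reaction network is persistent.

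The argument is therefore essentially bookkeeping: translating the objects at hand into the language of~\cite{ProjArg} and checking that the hypotheses of its projection theorem are met. The step I would expect to demand the most care --- and hence the main obstacle --- is the verification that strong endotacticity survives the species-removing projection and that the mass-action differential inclusion of the projected confined system agrees with the one induced on the relevant hypercube face; both are supplied by~\cite{ProjArg}, so no genuinely new argument is needed here beyond assembling those ingredients and confirming that ``repelled by the origin'' means the same thing in both papers.
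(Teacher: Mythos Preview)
Your proposal is correct and takes essentially the same approach as the paper: both recognize this lemma as a special case of the projection theorem for vertexical families from~\cite{ProjArg} (the paper cites it as Corollary~6.4 there). Your write-up simply unpacks more of the internal machinery of that cited result than the paper's one-line proof does.
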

\begin{proof}
This is a special case of Corollary~6.4 of~\cite{ProjArg}.
\end{proof}

To be useful, Lemma~\ref{l:projArg} requires the following auxiliary
result.


\begin{lemma}\label{l:avoid_origin}
If the function $g(x) = \sum_{i \in \SS} x_i \log x_i - x_i$ for a
confined reaction system $N$ decreases along trajectories outside a
compact set, then the mass-action differential inclusion
(Definition~\ref{d:MADiffI}) arising from~$N$ is repelled by the
origin.
\end{lemma}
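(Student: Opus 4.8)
The plan is to show that the origin is a repelling set for the mass-action differential inclusion by exploiting Lemma~\ref{l:vtx}: the function $g$ has a strict local maximum at the origin, and by hypothesis $g$ also decreases along trajectories outside a compact set $\Kset \subseteq \relIntP$. The strategy is to find, around the origin, two nested open neighborhoods $O_2 \subseteq O_1$ such that $g$ takes strictly larger values on $\ol{O_2}$ than it does on $\partial O_1$, and such that $O_1$ is still small enough to avoid the compact set $\Kset$ (so that $g$ is strictly decreasing along every trajectory while it is inside $O_1 \setminus \{0\}$). Then any trajectory starting outside $O_1$ can never penetrate $O_2$: to do so it would have to cross from $\partial O_1$ into $O_2$, but along the way (staying in $O_1 \setminus \Kset$) the value of $g$ only decreases, contradicting that the values of $g$ on $O_2$ exceed those on $\partial O_1$.

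Concretely, I would first invoke Lemma~\ref{l:vtx} to fix a radius $\rho_1 > 0$, small enough that $g$ restricted to $B_{\rho_1}(0) \cap \Rnn^\SS$ attains its maximum only at the origin, and small enough that $B_{\rho_1}(0) \cap \Rplus^\SS$ is disjoint from the compact set $\Kset$ furnished by the hypothesis (possible since $\Kset$ is a compact subset of the open orthant, hence bounded away from the origin). Set $O_1 = B_{\rho_1}(0) \cap \ol M$ in the compactification $\ol M = [0,\infty]^\SS$. Because $g$ is continuous on $\ol{O_1}$ and the sphere $\partial B_{\rho_1}(0) \cap \Rnn^\SS$ is compact with $g < g(0)$ there, the value $c := \max\{g(x) \mid x \in \partial B_{\rho_1}(0) \cap \Rnn^\SS\}$ satisfies $c < g(0)$. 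By continuity of $g$ at the origin, choose $\rho_2 \in (0,\rho_1)$ with $g(x) > c$ for all $x \in B_{\rho_2}(0) \cap \Rnn^\SS$, and set $O_2 = B_{\rho_2}(0) \cap \ol M$. Then $\ol{\{0\}} = \{0\} \subseteq O_2 \subseteq O_1$, as required by Definition~\ref{d:persistent_permanent}.2 (here $V = \{0\}$).

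Now let $f : I \to M$ be any trajectory with $f(\inf I) \notin O_1$. I claim $f(I) \cap O_2 = \emptyset$. Suppose not, so there is a first time $t_1$ with $f(t_1) \in \ol{O_2}$; since $f$ starts outside $O_1$ and $f$ is continuous, there is a time $t_0 < t_1$ with $f(t_0) \in \partial B_{\rho_1}(0)$ and $f(t) \in O_1 \setminus \{0\}$ for all $t \in (t_0, t_1]$ (the trajectory never hits the origin, as it lies in $M = \Rplus^\SS$; we may also take $t_0$ to be the last exit time from $\partial O_1$ before $t_1$). Throughout $(t_0, t_1]$ the trajectory lies in $O_1 \setminus \Kset$, so by the decrease hypothesis $\frac{d}{dt} g(f(t)) < 0$ there, hence $g(f(t_1)) < g(f(t_0)) \leq c$. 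But $f(t_1) \in \ol{O_2}$ forces $g(f(t_1)) \geq c$ by the choice of $\rho_2$ — a contradiction. Therefore $f(I) \cap O_2 = \emptyset$, proving that the differential inclusion is repelled by $\{0\}$.

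The main obstacle I anticipate is the careful handling of the compactification: one must ensure that $O_1$ and $O_2$ can be taken as neighborhoods of the origin \emph{in $\ol M = [0,\infty]^\SS$} (as Definition~\ref{d:persistent_permanent}.2 demands open sets in $\ol M$), while the trajectory itself lives in the open orthant $M$, so that ``$f$ never reaches the origin'' and the derivative $\frac{d}{dt}g(f(t))$ makes sense along the whole relevant sub-interval. A secondary technical point is verifying that $\Kset$ stays away from the origin: this is exactly the content of Lemma~\ref{l:cutoff_cpc}, which guarantees $\Kset = \Kset_\cutoff \subseteq \relIntP$, a compact subset of $\Rplus^\SS$, hence bounded away from $0$; and one should also note that the hypothesis of the present lemma is literally that $g$ decreases outside \emph{some} compact $\Kset \subseteq \relIntP$, which is all that is used.
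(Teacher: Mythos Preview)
Your argument is essentially the same as the paper's: both use Lemma~\ref{l:vtx} (local maximum of $g$ at the origin), construct two nested balls around the origin small enough to avoid $\Kset$, compare the value of $g$ on the boundary of the outer ball with its values on the inner ball, and use the monotone decrease of $g$ inside the outer ball to get a contradiction.

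There is, however, one genuine slip in your setup. Definition~\ref{d:persistent_permanent}.2 quantifies over \emph{every} open neighborhood $O_1$ of the origin in $\ol M$: given such an $O_1$, you must produce a smaller $O_2$. You instead \emph{define} $O_1 := B_{\rho_1}(0)\cap\ol M$. The fix is immediate and is exactly what the paper does: start with an arbitrary open $O_1 \ni 0$, then add to your list of smallness conditions on $\rho_1$ the requirement that $B_{\rho_1}(0)\cap\ol M \subseteq O_1$ (possible since $O_1$ is open). Your argument then shows that any trajectory starting outside $O_1$ --- hence outside $B_{\rho_1}(0)$ --- never enters $O_2 := B_{\rho_2}(0)\cap\ol M \subseteq O_1$, which is precisely what is required. (The paper also disposes separately of the trivial case $\ol 0 \notin \invtPoly$, where trajectories stay in $\invtPoly$ and hence away from the origin automatically; your argument handles this implicitly.)
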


\begin{proof}
Let $O_1$ be a relatively open neighborhood of the origin $\ol 0 \in
\Rnn^\SS$, and let $\invtPoly$ be an invariant polyhedron of~$\GG$.
Let $\Kset \subseteq \relIntP$ denote a compact set outside of which
$g$ decreases along trajectories.  If $\invtPoly$ does not
contain~$\ol 0$, then all trajectories
avoid a fixed neighborhood of the origin because they remain
in~$\invtPoly$.  Therefore assume $\invtPoly$ contains~$\ol 0$.
Let $N_\varepsilon$ denote the intersection of the nonnegative orthant
$\Rnn^\SS$ with an open ball of radius~$\varepsilon$ around the
origin.  Let $\varepsilon > 0$ be sufficiently small so that
\begin{bullets}
\item%
$O_1$ contains $N_\varepsilon$,
\item%
$\varepsilon$ is less than the distance between the origin $\ol 0$
and the compact set $\Kset$, and
\item%
the maximum of $g$ on $N_\varepsilon$ is attained uniquely at the
origin (cf.\ Lemma~\ref{l:vtx}).
\end{bullets}
As $g(\ol 0)=0$, it follows that $g$ is strictly negative on
$N_\varepsilon$.
Define $-M$ to be the maximum value of $g$ for nonnegative vectors of
norm $\varepsilon$, which is attained by compactness:
\begin{align}\label{eq:maxLyapNearOrigin}
  -M = \max \big\{g(z) \,\big|\, z \in \Rnn^\SS \text{ with } \lVert z
  \rVert =\varepsilon\big\}.
\end{align}
Thus, $-M < g(\ol 0) = 0$.
Again by Lemma~\ref{l:vtx}, there exists $\delta > 0$ so that $g(z)
> -M/2$ for $z \in N_\delta$.
It is enough to show that any trajectory $x: I \to \relIntP$ that
begins outside $O_1$, and thus outside $N_\varepsilon$ as well, never
enters $N_\delta$.  Indeed, if the trajectory ever enters the closure
of $N_\varepsilon$ at some point $x(t^*)$,
then $g(x(t^*)) \leq -M$ by~\eqref{eq:maxLyapNearOrigin}.  Thus, since
$g(x(t))$ decreases while $x(t)$ is in $N_\varepsilon$, because
$N_\varepsilon \cap \Kset$ is empty, $x(t)$ never reaches $N_\delta$
by definition of~$\delta$.
\end{proof}

We can now prove that Conjecture~\ref{conj:persistence} holds for
strongly endotactic networks networks.


\begin{theorem}\label{thm:persistence}
Strongly endotactic networks are persistent.
\end{theorem}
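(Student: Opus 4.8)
The plan is to assemble Theorem~\ref{thm:persistence} directly from the machinery already in place, with no new argument of substance: the work has been front-loaded into Theorem~\ref{thm:LyFncWorks_cpc}, Theorem~\ref{thm:bddness}, and the projection lemma Lemma~\ref{l:projArg}. Concretely, I would let $\mathcal{F}$ be the class of all confined strongly endotactic reaction systems and verify the two hypotheses of Lemma~\ref{l:projArg}, namely (1) every mass-action differential inclusion arising from a system in $\mathcal{F}$ is repelled by the origin, and (2) every such trajectory is bounded. Once both are checked, Lemma~\ref{l:projArg} immediately yields that every strongly endotactic network is persistent.

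For hypothesis (1), I would invoke Theorem~\ref{thm:LyFncWorks_cpc}: for any confined strongly endotactic reaction system $N$, the pseudo-Helmholtz free energy function $g(x) = \sum_{i\in\SS} x_i\log x_i - x_i$ decreases along trajectories outside a compact subset of $\relIntP$. Feeding this into Lemma~\ref{l:avoid_origin} — which is precisely the statement that decrease of $g$ outside a compact set forces the mass-action differential inclusion to be repelled by the origin — gives hypothesis (1) for every member of $\mathcal{F}$.

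For hypothesis (2), I would cite Theorem~\ref{thm:bddness}, which already establishes (again via Theorem~\ref{thm:LyFncWorks_cpc} and a sublevel-set argument) that every trajectory of every mass-action differential inclusion arising from a confined strongly endotactic reaction system is bounded. With (1) and (2) in hand, Lemma~\ref{l:projArg} closes the proof.

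The only thing to be careful about is that $\mathcal{F}$ is exactly the class required by Lemma~\ref{l:projArg} (all confined strongly endotactic systems, so that the family is closed under the relevant projections used in~\cite{ProjArg}), and that the compact sets supplied by Theorems~\ref{thm:LyFncWorks_cpc} and~\ref{thm:bddness} sit inside $\relIntP$ as needed by Lemma~\ref{l:avoid_origin}; both are automatic from the cited statements. I do not expect a genuine obstacle in this assembly step — the real difficulty lives upstream in Theorem~\ref{thm:LyFncWorks_cpc}, whose proof rests on the jet domination result Proposition~\ref{p:domination} and on the perturbed Birch theorem (Corollary~\ref{cor:bddStoicSubComponent}) to rule out the bad direction $w_1 \in \HPerp$.
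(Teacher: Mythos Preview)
Your proposal is correct and follows precisely the paper's own proof, which reads in its entirety: ``Follows from Lemmas~\ref{l:projArg} and~\ref{l:avoid_origin} and Theorems~\ref{thm:LyFncWorks_cpc} and~\ref{thm:bddness}.'' Your additional remarks about the class $\mathcal{F}$ and the location of the compact sets are accurate sanity checks but introduce nothing new.
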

\begin{proof}
Follows from Lemmas~\ref{l:projArg} and~\ref{l:avoid_origin} and
Theorems~\ref{thm:LyFncWorks_cpc} and~\ref{thm:bddness}.
\end{proof}


In addition, the \GAC holds for strongly endotactic complex-balanced
systems.

\begin{proof}[Proof of Theorem~\ref{thm:main2}]
Use Theorem~\ref{thm:persistence} and the discussion before
Conjecture~\ref{conj:persistence}.
\end{proof}

\begin{remark}\label{rmk:if_reflections_functorial}
Our reliance
on the projection argument (Lemma~\ref{l:projArg}) requires showing
that for strongly endotactic networks, the origin is repelling and
trajectories are bounded.  Instead of Lemma~\ref{l:projArg}, we could
have appealed to a related result, Corollary~6.3 in~\cite{ProjArg},
which states that for a ``vertexical'' family of differential
inclusions---such as the family arising from strongly endotactic
networks---if all vertices of $[0, \infty]^\SS$ are repelling, then
the boundary of $[0, \infty]^\SS$ is repelling and hence these
networks are persistent.  However, it is not known whether non-origin
vertices of $[0, \infty]^\SS$ are repelling for strongly endotactic
networks.
\end{remark}


\subsection{Permanence}\label{sub:perm}

Lemma~\ref{l:projArg} only records half of Corollary~6.4
of~\cite{ProjArg}.  The other half deals with permanence.

\begin{lemma}\label{l:projArg'}
In Lemma~\ref{l:projArg}, assume that 1 and 2 hold, and that $X$ is
such a differential inclusion on $\Rplus^\SS$.  If $K \subseteq
\Rplus^\SS$ is a compact set for which there exists $A \in \Rplus$
such that every trajectory of~$X$ that starts in~$K$ remains bounded
above by $A$ in each coordinate for all time, then for some compact
set $K^+ \subseteq \Rplus^\SS$, no trajectory of~$X$ that begins
in~$K$ leaves~$K^+$.
\end{lemma}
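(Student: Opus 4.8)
The plan is to obtain this statement as the permanence half of Corollary~6.4 of~\cite{ProjArg}, exactly parallel to the way Lemma~\ref{l:projArg} extracts the persistence half. The first step is to recall the structural input shared by both halves: the class $\mathcal{F}$ of confined strongly endotactic reaction systems gives rise to a \emph{vertexical} family of differential inclusions on the hypercube $[0,\infty]^\SS$, in the sense of~\cite{ProjArg}. This is the same fact already relied upon in the proof of Lemma~\ref{l:projArg}; it rests on the closure of strongly endotactic systems under the species-deletion projection of~\cite{ProjArg}, which is what the definition of ``vertexical'' requires. Nothing new needs to be verified here beyond pointing to where this was established.

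With that in place, I would simply quote Corollary~6.4 of~\cite{ProjArg} in full. Under hypotheses~(1) (every member of the family is repelled by the origin) and~(2) (every trajectory of every member is bounded), that corollary yields two conclusions: first, persistence of every network in the family — the content of Lemma~\ref{l:projArg} — and second, a permanence-type conclusion asserting that for any fixed member $X$ of the family on $\Rplus^\SS$ and any compact $K \subseteq \Rplus^\SS$ such that the trajectories of~$X$ beginning in~$K$ are uniformly bounded above in each coordinate by a single constant $A$, there is a compact set $K^+ \subseteq \Rplus^\SS$ that no trajectory starting in~$K$ ever leaves. Since hypotheses~(1) and~(2) here are assumed verbatim, $X$ is by assumption such a differential inclusion, and the uniform coordinatewise bound $A$ on trajectories issuing from~$K$ is also assumed verbatim, the conclusion of the lemma is literally the second conclusion of the corollary. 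So the proof is the one-line observation that this is the unquoted half of Corollary~6.4 of~\cite{ProjArg}.

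Accordingly, there is essentially no obstacle; the entire mathematical content lives in~\cite{ProjArg}. The only point worth a remark — the closest thing to a subtlety — is why demanding a \emph{uniform} bound $A$ on trajectories starting in $K$ is reasonable rather than restrictive. In the reaction network setting it is automatic: if $K$ is compact then $g = \sum_{i \in \SS} x_i \log x_i - x_i$ is bounded on $K$, and by the proof of Theorem~\ref{thm:bddness} each trajectory starting in $K$ stays in a sublevel set $\{x \in \invtPoly \mid g(x) \le M\}$ with $M = \max\{\sup_K g,\ \sup_{\Kset} g\}$ independent of the initial point, where $\Kset$ is the compact set from Theorem~\ref{thm:LyFncWorks_cpc}; this sublevel set is bounded, which supplies $A$. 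I would mention this only in passing, since the lemma as stated already hands us $A$; the observation matters when the lemma is invoked to prove permanence in Section~\ref{s:results}.
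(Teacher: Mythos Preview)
Your proposal is correct and matches the paper's approach exactly: the paper does not give a separate proof of this lemma but simply states it as the other half of Corollary~6.4 of~\cite{ProjArg}, the persistence half having already been recorded as Lemma~\ref{l:projArg}. Your additional remark about how the uniform bound~$A$ arises in practice (via the sublevel-set argument from Theorem~\ref{thm:bddness}) is correct and anticipates how the lemma is applied in the proof of Theorem~\ref{thm:main}, though the paper leaves that verification implicit.
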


Applying Lemma~\ref{l:projArg'} requires an auxiliary result that
makes use of the persistence result.


\begin{lemma}\label{l:meet_cpc_set}
Let $X$ be the mass-action differential inclusion of a confined
strongly endotactic reaction system $N$, specified by a reaction
network $\GG$ together with a tempering~$\kappa$ and an invariant
polyhedron~$\invtPoly$.  Then there exists a compact set $K$ in
$\relIntP$ such that the image of every trajectory of $X$ defined on a
ray meets $K$.
\end{lemma}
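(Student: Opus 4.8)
The plan is to combine the Lyapunov-like decay result (Theorem~\ref{thm:LyFncWorks_cpc}) with the persistence result (Theorem~\ref{thm:persistence}) and its proof mechanism. First I would invoke Theorem~\ref{thm:LyFncWorks_cpc} to obtain a compact set $\Kset \subseteq \relIntP$ outside of which $g(x) = \sum_{i\in\SS} x_i\log x_i - x_i$ strictly decreases along trajectories. Set $M_1 = \sup\{g(x) \mid x \in \Kset\}$, which is finite by compactness, and consider the sublevel set $\mathcal{Q}_1 = \{x \in \invtPoly \mid g(x) \leq M_1\}$; note that $\Kset \subseteq \mathcal{Q}_1$. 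The sublevel set $\mathcal{Q}_1$ need not be compact --- this is exactly the difficulty, since level sets of $g$ can meet the boundary of the positive orthant --- but I claim that every trajectory defined on a ray eventually enters $\mathcal{Q}_1$, and in fact must meet $\Kset$ itself.

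The key step is the following dichotomy. Fix a trajectory $x\colon I \to \relIntP$ defined on a ray. If the trajectory ever visits $\Kset$ we are done, so suppose $x(t) \notin \Kset$ for all $t \in I$. Then $\frac{d}{dt} g(x(t)) < 0$ for all $t \in I$, so $t \mapsto g(x(t))$ is strictly decreasing. I would argue that this is impossible by examining $\omega(f)$, the $\omega$-limit set in the compactification $[0,\infty]^\SS$. By Theorem~\ref{thm:persistence}, strongly endotactic networks are persistent, so the closure of the trajectory is disjoint from $\partial[0,\infty]^\SS$; by Theorem~\ref{thm:bddness} the trajectory is bounded; hence $\omega(f)$ is a nonempty compact subset of $\relIntP$. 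Pick $x^\infty \in \omega(f)$ with $x(t_n) \to x^\infty$ for some $t_n \to \infty$; then $g(x(t_n)) \to g(x^\infty)$. Since $g\circ x$ is strictly decreasing and $g(x^\infty)$ is its infimum along the sequence, we have $g(x(t)) > g(x^\infty)$ for all $t$. But $x^\infty$ lies in $\relIntP$, and (by persistence/boundedness again, and the fact that the trajectory is defined on all of the ray) for large times $x(t)$ returns arbitrarily close to $x^\infty$, forcing $g(x(t)) \to g(x^\infty)$ along another sequence of times as well --- and more importantly, I would show that $x^\infty$ must itself be an accumulation point reached infinitely often in a way that makes $g\circ x$ non-strictly-decreasing, a contradiction. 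More cleanly: since $x^\infty \in \relIntP$, either $x^\infty \in \Kset$ or $x^\infty \notin \Kset$. If $x^\infty \in \Kset$, then trajectory points $x(t_n)$ near $x^\infty$ eventually lie in any fixed neighborhood of $\Kset$; choosing that neighborhood inside $\relIntP$ and using continuity, one sees the trajectory enters a slightly enlarged compact set, and then --- replacing $\Kset$ by such an enlargement from the start, which we are free to do --- we contradict $x(t) \notin \Kset$. If $x^\infty \notin \Kset$, then a neighborhood $U$ of $x^\infty$ avoids $\Kset$, and $x(t_n) \in U$ for large $n$; on $U$, $g$ attains values near $g(x^\infty)$ from above and below depending on position, but since $\frac{d}{dt}g(x(t)) < 0$ strictly on $U \cap (\text{trajectory})$ and the trajectory re-enters $U$ infinitely often, the values $g(x(t_n))$ cannot be monotone decreasing to $g(x^\infty)$ unless the trajectory converges to $x^\infty$ --- but a point to which a trajectory converges while $g$ strictly decreases must be an equilibrium at which $\frac{d}{dt}g = 0$, contradicting strict decrease just outside $\Kset$ unless $x^\infty \in \Kset$. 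Either way we reach a contradiction, so the trajectory must meet $\Kset$.

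The main obstacle will be making the $\omega$-limit argument airtight: one must use that the trajectory is defined on an unbounded interval (a ray), that persistence keeps $\omega(f)$ inside $\relIntP$, and that a bounded trajectory along which a continuous function strictly decreases cannot have an $\omega$-limit point in the open region where that function continues to strictly decrease. I would phrase the final contradiction as: if $x^\infty \in \omega(f) \cap (\relIntP \setminus \Kset)$, then by continuity of $g$ and of the vector field bound, there is a neighborhood $U \ni x^\infty$ and a $\delta > 0$ with $\frac{d}{dt}g(x(t)) \leq -\delta$ whenever $x(t) \in U$; since the trajectory enters $U$ at arbitrarily large times and $g\circ x$ is bounded below (the trajectory lies in a fixed compact subset of $\relIntP$ by persistence and boundedness), the time spent in $U$ must be finite, so eventually $x(t) \notin U$, contradicting $x^\infty \in \omega(f)$. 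Hence $\omega(f) \subseteq \Kset$ (after enlarging $\Kset$ to a compact neighborhood of itself in $\relIntP$), and in particular the trajectory meets $\Kset$. Taking $K = \Kset$ (or this compact enlargement) completes the proof.
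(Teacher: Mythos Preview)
Your approach matches the paper's: obtain $\Kset$ from Theorem~\ref{thm:LyFncWorks_cpc}, enlarge it to a compact neighborhood $\widetilde\Kset \subseteq \relIntP$, invoke boundedness (Theorem~\ref{thm:bddness}) and persistence (Theorem~\ref{thm:persistence}) to conclude that the trajectory closure is a compact subset of $\relIntP$, and derive a contradiction if the trajectory avoids $\widetilde\Kset$. The only issue is the final step of your $\omega$-limit argument: from ``total time spent in $U$ is finite'' you conclude ``eventually $x(t) \notin U$'', but this does not follow---the trajectory could visit $U$ on intervals of summable total length at arbitrarily large times, so $x^\infty$ could still lie in $\omega(f)$.

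The paper sidesteps this by arguing globally rather than locally: if the trajectory avoids $\widetilde\Kset$, then its compact closure in $\relIntP$ is disjoint from $\Kset$, so by compactness (of the closure crossed with the compact rate box $\prod_{r}\kappa(r)$) the derivative $\frac{d}{dt}g(x(t))$ is bounded above by some $-\delta < 0$ for \emph{all} $t$, whence $g(x(t)) \to -\infty$, contradicting the uniform lower bound $g \geq -|\SS|$. You already noted parenthetically that the trajectory lies in a fixed compact subset of $\relIntP$; simply replace the local neighborhood $U$ by this full trajectory closure and the argument goes through.
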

\begin{proof}
By Theorem~\ref{thm:LyFncWorks_cpc}, there exists a compact set $\Kset
\subseteq \relIntP$ outside of which $g$ decreases along trajectories.
Enlarge $\Kset$ to a closed $\varepsilon$-neighborhood of $\Kset$,
denoted by $\widetilde{\Kset}$, so that $\widetilde{\Kset} \subseteq
\relIntP$; thus $\widetilde{\Kset}$ is compact.
Let $x: I \to \relIntP$ be a trajectory of the mass-action
differential inclusion arising from $\GG$ with $\kappa$ and
$\invtPoly$, where $I = [a,\infty]$ is a ray.  The goal is to show
that the image of $x$ meets $\widetilde{\Kset}$.  If $x(a) \in
\widetilde\Kset$, then we are done.  Otherwise, there exists a
nonempty maximal subinterval $[a,b) \subset I$ on which $g(x(t))$ is
decreasing.  If $b < \infty$, then $x(b) \in \Kset$, so we are done.
The remaining case has $g(x(t))$ decreasing for all time $t > a$.
The closure of the trajectory $x(t)$ is a compact subset of
$\relIntP$, because this network has bounded trajectories
(Theorem~\ref{thm:bddness}) and is persistent
(Theorem~\ref{thm:persistence}).

Any trajectory $x(t)$ that fails to intersect $\widetilde\Kset$ is
bounded away from $\Kset$.  The time derivative $\frac{d}{dt} g(x(t))$
of such a trajectory is therefore negative and bounded away from~$0$
(otherwise the closure of the trajectory would meet~$\Kset$, forcing
the trajectory itself to meet $\widetilde\Kset$), causing $g(x(t))$ to
have limit $-\infty$ as $t \to \infty$.  But $g$ is bounded below by
$-|\SS|$, so we conclude that every trajectory $x(t)$ must visit the
compact set $\widetilde\Kset$.
\end{proof}

We now prove our main result, which states that strongly endotactic
networks are permanent, thereby resolving
Conjecture~\ref{conj:ext_permanence} in the strongly endotactic case.

\begin{proof}[Proof of Theorem~\ref{thm:main}]
First, strongly endotactic networks are persistent by
Theorem~\ref{thm:persistence}.  Next, by Lemma~\ref{l:meet_cpc_set},
for every mass-action differential inclusion $X$ arising from a
strongly endotactic network, there exists a compact set $K$ in the
positive orthant that meets every trajectory of $X$ defined on a ray.
For the purpose of studying eventual properties of trajectories such
as permanence, we therefore assume that each trajectory defined on a
ray begins in~$K$.  Lemma~\ref{l:projArg'} applies for strongly
endotactic networks by Lemma~\ref{l:avoid_origin} along with
Theorems~\ref{thm:LyFncWorks_cpc} and~\ref{thm:bddness}; it implies
that there exists a compact set $K^+$ in $\Rplus^\SS$ such that every
such trajectory eventually remains in $K^+$.  Thus $X$ is permanent.
\end{proof}

As a corollary, certain weakly reversible networks are permanent.


\begin{corollary}\label{cor:w-rev}
If each linkage class of a weakly reversible network $\GG$ has the same
stoichiometric subspace, namely that of $\GG$, then $\GG$ is permanent.
\end{corollary}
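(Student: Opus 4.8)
The plan is to derive this corollary by chaining together two results already established in the paper, so that no new analysis is required. First I would invoke Corollary~\ref{cor:same_subspace}, which says precisely that a weakly reversible network in which every linkage class shares the stoichiometric subspace of the whole network is strongly endotactic. This reduces the claim to showing that strongly endotactic networks are permanent.

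Next I would apply Theorem~\ref{thm:main}, which asserts that every strongly endotactic reaction network is permanent. Since ``permanent'' for a network (Definition~\ref{d:strongly_persistent}) means that for every choice of tempering $\kappa$ and invariant polyhedron $\invtPoly$ the associated mass-action differential inclusion is permanent, combining the two results immediately yields that $\GG$ is permanent. So the body of the proof is essentially two sentences.

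The only point worth double-checking is that the hypothesis of Corollary~\ref{cor:w-rev} is literally the hypothesis of Corollary~\ref{cor:same_subspace}; both require that each linkage class have stoichiometric subspace equal to that of $\GG$ itself (not merely that the linkage classes agree with each other). Assuming that match, there is no obstacle at all — this corollary is a straightforward packaging of Corollary~\ref{cor:same_subspace} and Theorem~\ref{thm:main}, recorded for the reader's convenience because it specializes the main permanence theorem to a recognizable subclass of weakly reversible networks (and in particular, via Corollary~\ref{cor:one_l_class}, covers the one-linkage-class case, strengthening Anderson's result as advertised in the introduction). If one wanted, one could also note here that Theorem~\ref{thm:OneLClass} follows by the same reasoning together with Corollary~\ref{cor:one_l_class}.
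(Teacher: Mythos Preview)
Your proposal is correct and matches the paper's own proof exactly: the paper simply writes ``Follows from Corollary~\ref{cor:same_subspace} and Theorem~\ref{thm:main},'' which is precisely the two-step chain you describe.
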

\begin{proof}
Follows from Corollary~\ref{cor:same_subspace} and
Theorem~\ref{thm:main}.
\end{proof}

Similarly, weakly reversible reaction networks with one linkage class
are permanent.


\begin{proof}[Proof of Theorem~\ref{thm:OneLClass}]
Follows from the Corollary~\ref{cor:one_l_class} and
Theorem~\ref{thm:main}.
\end{proof}

\begin{remark}\label{rmk:whyNoPermanence}
Is an appeal to our earlier work (Lemmas~\ref{l:projArg}
and~\ref{l:projArg'}) necessary to prove permanence?  Could the fact
that $g$ decreases along trajectories suffice to deduce
Theorem~\ref{thm:main}?  Such an approach seems not to work.  On one
hand, we can prove that maximal trajectories must approach a
prescribed compact set in $\invtPoly$, namely the smallest sublevel
set~$Q$ of $g$ that contains~$\Kset$.  On the other hand, $Q$ may meet
the boundary of~$\invtPoly$.  Thus the union of all $\omega$-limit
sets of all possible trajectories lies in $Q \cap \relIntP$, but its
closure in $\invtPoly$ could meet the boundary of $\relIntP$.  Another
approach to permanence is to determine whether $\Kset$ is an
attracting set.  However, this also seems not to work: although
persistence can be used to prove that every trajectory outside $\Kset$
must enter a neighborhood of $\Kset$ (denoted by $\widetilde{\Kset}$
in the proof of Lemma~\ref{l:meet_cpc_set}), the trajectory could exit
and re-enter $\widetilde{\Kset}$ infinitely~often.
\end{remark}


\subsection{Existence of positive steady states}\label{s:exist_equilibria}

In the setting of mass-action ODE systems, Deng et al.\ proved that
for weakly reversible networks, each invariant polyhedron contains a
positive steady state~\cite{Deng}.  As for two-dimensional endotactic
networks, Craciun, Nazarov, and Pantea~\cite[\S 6]{CNP} explained that
their permanence result, together with the Brouwer fixed-point
theorem~\cite[\S 55]{Munkres}, implies the existence of positive
steady states.  (Their result is complementary to that of Deng~et~al.,
because weakly reversible networks form a proper subset of endotactic
networks.)  This standard application of the Brouwer fixed-point
theorem to establish the existence of positive steady states in the
setting of reaction systems goes back at least to Wei in
1962~\cite{Wei}.

We too are able to prove the existence of positive steady states
(Corollary~\ref{cor:exist_equilibria}).  However, this is not
accomplished via the Brouwer fixed-point theorem, because we do not
readily obtain a forward-invariant set $U$ in $\Rplus^\SS$ that is
homeomorphic to a ball; in particular, the sets $K$ and $K^+$ in the
proofs of Lemma~\ref{l:meet_cpc_set} and Theorem~\ref{thm:main} do not
suffice.  Instead our proof relies on the following result concerning
fixed points of dynamical systems, a special case
of~\cite[Theorem~3]{RW} first proven in a general setting by
Srzednicki~\cite{Srz}.


\begin{lemma}\label{l:exist_steady_state}
Let $X$ be a continuous ODE system on a polyhedron $\invtPoly$ in
$\bR^n$ such that every trajectory of $X$ defined on a ray meets a
compact set $K$.  Then $X$ has a steady state.
\end{lemma}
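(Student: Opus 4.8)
The plan is to cite the relevant fixed-point theorem from the dynamical systems literature and reduce our statement to it, rather than reproving the topological machinery. The key reference is~\cite[Theorem~3]{RW}, which itself generalizes earlier work of Srzednicki~\cite{Srz} on the existence of fixed points (equilibria) of flows via the fixed-point index on so-called ``isolating blocks'' or, more generally, via a topological argument using the ``returning'' behavior of trajectories. Concretely, one wants to verify the hypotheses of that theorem in our situation: $X$ is a continuous (locally Lipschitz suffices for uniqueness of trajectories, but continuity alone suffices for existence of a steady state by the Brouwer--type argument underlying~\cite{RW}) vector field on a polyhedron $\invtPoly \subseteq \bR^n$, and the standing assumption is that every trajectory of $X$ defined on a ray $[a,\infty)$ meets a fixed compact set $K \subseteq \invtPoly$.

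First I would set up the flow: since $\invtPoly$ is forward-invariant for $X$ (this is built into the notion of the mass-action dynamics being confined to $\invtPoly$, and is what makes ``$X$ an ODE system on $\invtPoly$'' meaningful), every maximal trajectory starting in $\invtPoly$ is defined on a ray provided it does not escape to infinity in finite time. Boundedness of trajectories that start in $K$ (which in our applications comes from Theorem~\ref{thm:bddness}) guarantees completeness of these trajectories, so the forward flow $\varphi_t$ is well defined for all $t \geq 0$ on the relevant region. Second, the compact set $K$ that every ray-trajectory meets plays the role of a ``window'' or ``trapping region'' in the sense required by~\cite{RW}: the hypothesis that every forward trajectory returns to $K$ is exactly the kind of recurrence condition under which the fixed-point index of the time-$T$ maps (or of an associated Poincar\'e-type map) is forced to be nonzero, and a nonzero index yields a fixed point of $\varphi_T$ for all $T$; taking $T \to 0$ along a sequence and using compactness of $\ol\invtPoly$ and continuity of $X$ produces a common fixed point of the flow, which is precisely a steady state (a zero of the vector field $X$). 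Third, I would note that convexity of $\invtPoly$ (it is a polyhedron) is what makes the relevant topological space contractible, so the index computation reduces to a Brouwer/Lefschetz count equal to $1 \neq 0$; this is the role convexity plays in~\cite[Theorem~3]{RW} and in Srzednicki's framework.

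The main obstacle, and the reason we invoke~\cite{RW} rather than a bare Brouwer argument, is that we do \emph{not} have a forward-invariant set homeomorphic to a ball: the set $K$ (or its enlargement $K^+$ from the permanence proof) need not be convex, need not be forward-invariant, and trajectories may wander out of a neighborhood of $K$ and return infinitely often (cf.\ Remark~\ref{rmk:whyNoPermanence}). The theorem of~\cite{RW}/Srzednicki is designed exactly to handle this: it extracts a steady state from the weaker, purely recurrent hypothesis that all trajectories meet a common compact set, using the fixed-point index of the flow on the ambient convex polyhedron rather than requiring an invariant ball. So the ``proof'' at our level of detail is: check that $X$ on $\invtPoly$ meets the hypotheses of~\cite[Theorem~3]{RW} --- a continuous vector field on a (convex) polyhedron, with every ray-defined trajectory meeting a fixed compact $K$ --- and conclude directly that $X$ has a steady state.
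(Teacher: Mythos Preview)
Your proposal is correct and matches the paper's own treatment: the paper does not prove this lemma but simply states it as a special case of \cite[Theorem~3]{RW}, noting the original general result is due to Srzednicki~\cite{Srz}. Your additional commentary on why a bare Brouwer argument fails (no forward-invariant ball) and what role convexity of $\invtPoly$ plays is consistent with the paper's surrounding discussion (cf.\ Remark~\ref{rmk:whyNoPermanence} and the paragraph preceding Lemma~\ref{l:exist_steady_state}), but the paper itself offers no further argument beyond the citation.
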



\begin{corollary}\label{cor:exist_equilibria}
For a mass-action kinetics ODE system arising from a strongly
endotactic network, each invariant polyhedron contains a positive
steady state.
\end{corollary}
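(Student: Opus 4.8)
The plan is to derive Corollary~\ref{cor:exist_equilibria} by combining the permanence machinery already established with the abstract fixed-point result Lemma~\ref{l:exist_steady_state}. The key observation is that the mass-action \emph{ODE} system arising from a strongly endotactic network (taking the tempering $\kappa$ to assign the singleton $\{k_r\}$ to each reaction, so that the differential inclusion has one-point fibers and is an honest ODE) is a continuous vector field on each invariant polyhedron $\invtPoly$. First I would fix such a network $\GG$, fix reaction rate constants, and fix an invariant polyhedron $\invtPoly = (x_0 + H)\cap\Rnn^\SS$; the associated ODE $\dot x = \sum_{r=(y\to y')\in\RR} k_r x^y\,\flux r$ is polynomial, hence continuous, and $\relIntP$ is forward-invariant. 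The resulting confined reaction system is strongly endotactic, so all of Section~\ref{s:results} applies to it.

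Next I would invoke Lemma~\ref{l:meet_cpc_set}: for this confined strongly endotactic reaction system there exists a compact set $K\subseteq\relIntP$ such that the image of every trajectory defined on a ray meets $K$. This is exactly the hypothesis of Lemma~\ref{l:exist_steady_state}, applied with the polyhedron $\invtPoly$ and the continuous ODE system $X$ given by the vector field above. The conclusion of Lemma~\ref{l:exist_steady_state} is that $X$ has a steady state in $\invtPoly$. It remains only to upgrade ``a steady state in $\invtPoly$'' to ``a \emph{positive} steady state'', i.e.\ one lying in $\relIntP\subseteq\Rplus^\SS$.

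The positivity is where a small argument is needed, and I expect it to be the main (though mild) obstacle. By permanence (Theorem~\ref{thm:main}), there is a compact set $\Omega\subseteq\relIntP$ such that every trajectory defined on a ray is eventually contained in $\Omega$. A steady state $x^*$ produced by Lemma~\ref{l:exist_steady_state} is itself a constant trajectory defined on the ray $[0,\infty)$, hence eventually---indeed always---lies in $\Omega\subseteq\relIntP$. Therefore $x^*\in\relIntP\subseteq\Rplus^\SS$ is a positive steady state. (Alternatively, one can note directly that the compact set $K$ in Lemma~\ref{l:meet_cpc_set} lies in $\relIntP$ and that the constant trajectory at $x^*$ must meet $K$, forcing $x^*\in K\subseteq\relIntP$.) This completes the proof.

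In summary, the proof is essentially a three-line deduction: continuity of the mass-action ODE vector field, Lemma~\ref{l:meet_cpc_set} to verify the ``every ray-trajectory meets a compact set'' hypothesis, Lemma~\ref{l:exist_steady_state} to extract a steady state in $\invtPoly$, and then permanence (or the location of $K$ inside $\relIntP$) to conclude that the steady state is positive. No new estimates are required; the only thing to be careful about is that Lemma~\ref{l:exist_steady_state} is stated for continuous ODE systems on a polyhedron, so one must genuinely restrict to the mass-action ODE (singleton tempering) rather than the general differential inclusion, and one must confirm that the hypotheses of Lemma~\ref{l:meet_cpc_set}, which is stated for differential inclusions, specialize correctly to this ODE.
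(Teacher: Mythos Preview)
Your proposal is correct and follows essentially the same approach as the paper: invoke Lemma~\ref{l:meet_cpc_set} to obtain a compact $K\subseteq\relIntP$ met by every ray-trajectory, apply Lemma~\ref{l:exist_steady_state} to extract a steady state, and conclude positivity from $K\subseteq\relIntP$. Your constant-trajectory argument for positivity is exactly the justification implicit in the paper's claim that the steady state lies ``in $K$,'' and you are if anything more explicit about this step than the paper is.
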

\begin{proof}
Let $X$ denote such a mass-action kinetics ODE system with invariant
polyhedron $\invtPoly$.  The proof of Lemma~\ref{l:meet_cpc_set}
shows that there exists a compact set $K$ in $\relIntP$ such that the
image of every trajectory of $X$ defined on a ray meets $K$.  Thus, by
Lemma~\ref{l:exist_steady_state}, $X$ has a steady state in $K$,
which is therefore in $\relIntP$.
\end{proof}


\section{Examples}\label{s:ex}

The examples here illustrate our results as well as their limitations.


\subsection{Strongly endotactic networks}\label{sub:strongly}

\begin{example}\label{ex:net_again}
By Theorem~\ref{thm:main}, the strongly endotactic network from
Example~\ref{ex:str_end}, comprised of the reactions $0 \to 3A+B$, $2A
\to B$, and $2B \to A+B$, is permanent.
\end{example}

Permanence of the network in Example~\ref{ex:net_again} already was
known by the results of Craciun, Nazarov, and Pantea~\cite{CNP} and
extensions by Pantea~\cite{Pantea}, which apply to endotactic networks
with stoichiometric subspace of dimension at most~$2$.  For the
following $3$-dimensional example, their results do not apply.


\begin{example}\label{ex:net3d}
The following network is strongly endotactic, so it is permanent:
$$%
  0 \to A \to B \to C \to 0.
$$
\end{example}

The weaker persistence result for
the network in Example~\ref{ex:net3d} follows from work of Angeli,
De~Leenheer, and Sontag~\cite{ADS11} because it has no siphons.  In
contrast, the following example contains critical siphons, so the
results of Angeli, De Leenheer, and Sontag do~not~apply.


\begin{example}\label{ex:wdne}
The following network is strongly endotactic, so it is permanent:
$$%
  2A \lra B \quad \quad 2B \lra C \quad \quad 2C \lra A.
$$
Combinatorially, the reactant polytope is a triangular prism; one
triangle is formed by $A$, $B$, and $C$, and the other is formed by
$2A$, $2B$, and $2C$.  Each of the three pairs of reversible reactions
lies along a diagonal of one of the quadrilateral boundary faces of
the polytope.
\end{example}

Persistence in
Example~\ref{ex:wdne} already was known by work of Johnston and Siegel
\cite{JohnstonSiegel}, which extends easily to the setting of
mass-action differential inclusions, because the unique critical
siphon $\{A,B,C\}$ can be shown to be weakly dynamically
non-emptiable.  In the next example, however, no previous results
apply to obtain persistence: it is a three-dimensional network for
which $\{A,B,C\}$ is a critical siphon but not weakly dynamically
non-emptiable.


\begin{example}\label{ex:only_us}
The following network is strongly endotactic and therefore permanent:
$$%
  4A \to A+B+C \to 4B \to 4C \to 2A+2C.
$$
The reactant polytope is a tetrahedron with vertices $A+B+C$, $4A$,
$4B$, and $4C$.  Each of the four reactions lies on or is a subset of
one of the six edges of the tetrahedron.
\end{example}


\subsection{Limitations of the geometric approach}\label{s:limit}

The geometric approach presented in this work has some limitations in
its ability to resolve
Conjectures~\ref{conj:GAC}--\ref{conj:boundedness} in full generality.
We explain these limitations via a simple example.


\begin{example}\label{ex:obs1}
The following network $G$ is endotactic but not strongly endotactic:
$$%
  0 \lra A \quad \quad  B \lra 2B.
$$
Define a toric jet adapted to $G$ in the following way: take any
sequence $(\thetaN)$ of numbers greater than 1 that limits to
infinity, and let $\wN = -(\frac 1{\thetaN},1)/\lVert(\frac
1\thetaN,1)\rVert$.  It is straightforward to verify that the
conclusion of Proposition~\ref{p:domination} fails for every
subsequence of the resulting toric jet $\toricJet$.  Namely, the pull
of the \bad reaction $0 \leftarrow A$ with respect to the toric jet
cannot be dominated by either of the two \good reactions $0 \to A$ or
$B \to 2B$.  Although results of Craciun, Nazarov, and
Pantea~\cite{CNP,Pantea} can prove that this two-dimensional system is
permanent, higher-dimensional generalizations of this network can not
be proved to be permanent or even persistent.

The limiting direction of the unit jet
here is $\lim\limits_{n \to \infty} w(n) = (0,-1)$, and this direction
$(0,-1)$ witnesses the fact that the network fails to be strongly
endotactic.  Namely, there is no reaction with reactant among the
$\leq_{(0,-1)}$-maximal reactants $\{0,A\}$ and product outside of
that set.  Our results for strongly endotactic networks rely on the
existence of such a \good reaction, so they are unable to prove
permanence or even persistence of this network or similar networks by
way of toric jets.
\end{example}

In our final example, the question of persistence is as yet unresolved.


\begin{example}\label{ex:net_open}
The following $3$-dimensional network is weakly reversible---and
therefore endotactic---but not strongly endotactic:
$$%
  A \to B \to C \to A \quad \quad 2A \lra 3B.
$$
The reactant polytope is a pyramid whose base has vertices $A$, $2A$,
$B$, and $3B$.  Three of the reactions form a triangle along edges of
the pyramid, while the remaining pair of reversible reactions lies on
the edge of the pyramid nonadjacent to the triangle.  The network has
a unique critical siphon, $\{A,B,C\}$, which is not weakly dynamically
non-emptiable.  Persistence and permanence
(Conjectures~\ref{conj:persistence} and~\ref{conj:ext_permanence})
remain open for this network.
\end{example}


\paragraph{Acknowledgements.}
MG was supported by a Ramanujan fellowship from the Department of
Science and Technology, India, and, during a semester-long stay at
Duke University, by the Duke MathBio RTG grant NSF DMS-0943760.  EM
was supported by NSF grant DMS-1001437.  AS was supported by an NSF
postdoctoral fellowship DMS-1004380.  The authors thank
David~F.~Anderson, Gheorghe Craciun, Matthew Johnston, and Casian
Pantea for helpful discussions, and Duke University, where many of the
conversations occurred.  The authors also thank two referees whose perceptive and
insightful comments improved this work.

\providecommand{\bysame}{\leavevmode\hbox to3em{\hrulefill}\thinspace}


\end{document}